\newcommand{\x}{x}
\newcommand{\Rd}{\mathbb{R}^d}
\newcommand{\R}{\mathbb{R}}
\newcommand{\map}[3]{#1: #2 \rightarrow #3}
\newcommand{\rev}[1]{{\color{black} #1}}
\newtheorem{theorem}{Theorem}[section]
\newtheorem{lemma}[theorem]{Lemma}
\newtheorem{definition}[theorem]{Definition}	
\newtheorem{proposition}[theorem]{Proposition}
\newtheorem{corollary}[theorem]{Corollary}	
\newtheorem{problem}[theorem]{Problem}
\newtheorem{assumption}[theorem]{Assumption}
\newcommand{\B}{\mathcal{B}}
\newcommand{\D}{\mathcal{D}}
\newcommand{\T}{\mathcal{T}}
\begin{document}
	
	\title{Score Matching Diffusion Based Feedback Control and Planning of Nonlinear Systems}

    \author{Karthik Elamvazhuthi, Darshan Gadginmath, and Fabio Pasqualetti
\thanks{This material is based upon work supported in part by award AFOSR-
FA9550-20-1-0140.}
\thanks{Karthik Elamvazhuthi is with the T-5 Applied Mathematics and Plasma Physics Group, Los Alamos National Laboratory (e-mail: karthikevaz@lanl.gov). }
\thanks{Darshan Gadginmath is with the Department of Mechanical Engineering, University of California, Riverside, CA, USA (e-mail: dgadg001@ucr.edu).}
\thanks{Fabio Pasqualetti is with the 
Electrical Engineering and Computer Science, University of California, Irvine, CA, USA ( (e-mail: fabiopas@uci.edu).}}

	\maketitle

	\begin{abstract} 
		
      \rev{In this paper, we propose a deterministic diffusion-based framework for controlling the probability density of nonlinear control-affine systems, with theoretical guarantees for drift-free and linear time-invariant (LTI) dynamics. The central idea is to first excite the system with white noise so that a forward diffusion process explores the reachable regions of state space, and then to design a deterministic feedback law that acts as a denoising mechanism driving the system back toward a desired target distribution supported on the target set. This denoising phase provides a feedback controller that steers the control system to the target set. In this framework, control synthesis reduces to constructing a deterministic reverse process that reproduces the desired evolution of state densities. We derive existence conditions ensuring such deterministic realizations of time-reversals for controllable drift-free and LTI systems, and show that the resulting feedback laws provide a tractable alternative to nonlinear control by viewing density control as a relaxation of controlling a system to target sets. Numerical studies on a unicycle model with obstacles, a five-dimensional driftless system, and a four-dimensional LTI system demonstrate reliable diffusion-inspired density control.}
	\end{abstract}
	
	\begin{IEEEkeywords}
		Diffusion processes, Nonlinear control, Machine learning.
	\end{IEEEkeywords}
	
	\section{Introduction
	}
  
	\rev{Feedback control of nonlinear systems remains a central challenge in control theory. Unlike linear systems, which admit systematic stabilization via LQR, pole placement, and Lyapunov methods, nonlinear systems lack a unified framework due to obstructions such as non-convex optimal control formulations~\cite{clarke2013functional} and topological constraints on smooth feedback~\cite{brockett1983asymptotic,khalil2002nonlinear}. Although feedback linearization~\cite{krener1999feedback}, Lyapunov-based control~\cite{freeman1996control}, and model predictive control~\cite{grune2017nonlinear} address specific classes of systems, the absence of a broadly applicable synthesis method motivates alternative problem formulations.}

    \rev{Recent advances in \emph{generative modeling} provide a complementary viewpoint for constructing structured behaviors for systems. In these models, random perturbations are first introduced to explore the space of possible states, and a subsequent ``denoising'' phase reconstructs coherent samples from this noisy representation. 
This diffusion--denoising cycle underlies modern score-based and diffusion models \cite{ho2020denoising,song2020score,}, which learn to reverse a noise-driven diffusion process to recover structured data distributions. 
The same principle of exploration through noise followed by structured correction offers a useful analogy for control synthesis, where excitation of the system dynamics and subsequent stabilization play analogous roles.


This paper adopts this perspective to formulate a new approach to feedback control for nonlinear systems. 
We interpret control synthesis as a two-phase process: 
a \emph{diffusion phase}, in which the system is excited with noise to explore its reachable state space, and a \emph{deterministic denoising phase}, in which feedback drives the system toward a desired target distribution. 
The denoising phase is posed as a finite-time \emph{density-control problem}, where the feedback law realizes the time-reversal of the forward diffusion and steers the system from the noise-excited distribution back to the initial distribution of the forward phase.  By embedding feedback within this diffusion--denoising cycle, control design reduces to constructing a deterministic reverse process that reproduces the desired evolution of the system’s state densities.

	\subsection{Contribution}
    Our main contributions are as follows:
	
\begin{enumerate}
    \item \textbf{Diffusion--denoising control algorithms:} 
    We develop two algorithms that synthesize feedback laws by reversing a forward diffusion process under the system dynamics (Section~\ref{sec:Algos}).
    Algorithm~1 minimizes the KL divergence between the controlled and reference densities in the style of denoising diffusion probabilistic models, while Algorithm~2 learns a nonholonomic score function that directly approximates the time-reversing feedback law.

    \item \textbf{Existence and realizability theory:} 
    We derive rigorous conditions under which a deterministic feedback law can exactly reproduce the time-reversed evolution of a diffusion process.  
    These results establish the existence and well-posedness of deterministic reverse processes for two system classes:  
    (\emph{i}) controllable drift-free nonlinear systems satisfying the Chow--Rashevsky condition (Theorem \ref{thm:detrel} and Theorem \ref{thm:detcontaffalg2}), and  
    (\emph{ii}) controllable and asymptotically stable linear time-invariant (LTI) systems (Theorem \ref{thm:revglor2}). These results yield target set convergence guarantees (Corollary \ref{cor:conafftargset1}, Corollary \ref{cor:conafftargset2} and Corollary \ref{cor:lintargset}).


    \item \textbf{Numerical validation:} 
    \rev{We demonstrate both algorithms on a unicycle model with obstacles, a five-dimensional driftless system, and a four-dimensional LTI system, illustrating finite-time density steering and stabilization (Section~\ref{sec:numer}).
}
\end{enumerate}
	}
	
	\rev{\paragraph*{Comparison with prior work on density control.}

The control of probability densities has been studied extensively within the frameworks of optimal transport and stochastic control, where system behavior is described in terms of evolving distributions rather than individual trajectories. Foundational work extended optimal transport theory to nonlinear control-affine systems, including settings with nonholonomic constraints~\cite{agrachev2009optimal,figalli2010mass}. Subsequent generalizations relaxed assumptions on the system dynamics and cost functions~\cite{elamvazhuthi2023dynamical,elamvazhuthi2024benamou}, and extensions to feedback-linearizable systems were proposed in~\cite{caluya2020finite}. For linear systems, finite-horizon covariance and density steering problems have been studied primarily for Gaussian distributions~\cite{bakolas2018finite,okamoto2018optimal}. While these approaches offer rigorous optimality guarantees, they typically require solving high-dimensional partial differential equations or the optimization problem might be non-convex.  In contrast, the framework proposed here does not pose density steering as an optimal control problem over the space of probability densities. Instead, it constructs an auxiliary forward diffusion solely to generate a reference density trajectory and then synthesizes a deterministic feedback law that realizes the time-reversed density evolution under the Liouville equation. 

}

\rev{\paragraph*{Concurrent work on time-reversal based control.}
Two concurrent and independent studies also explore time-reversal ideas for control:
Grong \emph{et al.}~\cite{grong2024score} developed geometric score-matching for diffusion bridges on sub-Riemannian manifolds, and Mei \emph{et al.}~\cite{mei2025time} used time-reversal theory to design score-based feedback laws for stochastic control-affine systems.
Both retain stochastic noise in the reverse process and address only point-to-point steering (Dirac targets).
Our framework differs in two respects: the reverse process is deterministic, and it steers between general probability densities.
We also prove existence and realizability of feedback control for drift-free and LTI systems.}

\section{Problem setting and preliminary notions}
\label{sec:problem_formulation}
	Consider the control affine, nonlinear control system
	\begin{align}
		\label{eq:ctrsys}
		\dot x = g(x,u) = g_0(x)+\sum_{i=1}^m  g_i(x) u_i,  
	\end{align}
	where $x \in \mathbb{R}^d$ denotes the state  of the system, $u_i$ the $i$-th control input, and $g_i \in C^{\infty}(\mathbb{R}^d;\mathbb{R}^d)$ are smooth vector fields. The model \eqref{eq:ctrsys} is commonly used in robotics to describe the dynamics of ground vehicles, underwater robots, manipulators, and other nonholonomic systems~\cite{JCL:2012}. 
The goal of this paper is to design feedback laws that steer the state of such systems toward a desired region or distribution within a finite horizon.
    

    \rev{\begin{problem}[\rev{Target-set control in finite time}]
\label{prob:set_convergence}

Given the dynamics~\eqref{eq:ctrsys}, an initial probability of initial states $p_0$, a target
set $\Omega_{\rm target} \subset\mathbb{R}^d$, and a finite horizon $T>0$, design a measurable
feedback law $u=\pi(t,x)$ such that $x(T) \in \Omega_{\rm target}$ with probability 1.
\end{problem}}

	\begin{figure*}
		\centering
		\includegraphics[width=0.94\textwidth]{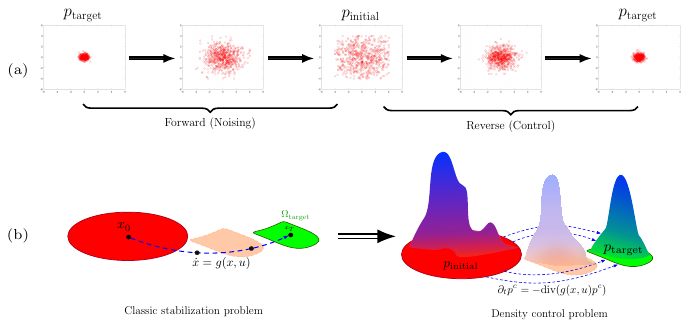}
		\caption{(a) DDPMs in two dimensions, transforming the data distribution $p_{\text{target}}$ to $p_n$ and learning to reverse the process. (b) Reformulation from the classical control problem to the density control problem. }\label{fig:diffusion-models2d}
	\end{figure*}

    \rev{A natural way to address the above problem would be to control the marginal density of the state $p^{\text{c}}(t)$ for all $t \in [0,T]$ along the dynamics. Here the marginal density $p^c_t = \mathbb{P}(x(t) \in dx)$.}  Therefore, in order to address the above problem, we consider a more general state feedback control problem, relating it to probability density control. \rev{Toward this end, we will need a description of how the density $p_c(t)$ evolves under the dynamics \eqref{eq:ctrsys} with feedback $u = \pi(t,x)$. This is given by the Liouville equation or the continuity equation \cite{santambrogio2015optimal},}
	\begin{align}
		\label{eq:fwdpdf_cs}
		\partial_t p^{\textup{c}}_t = -\textup{div}(g(x,\pi(t,x)) p^{\textup{c}}_t), 
	\end{align}
	for a given initial density $p^{\textup{c}}_0 = p_0$. This results in a more general problem as follows.
    \rev{
	\begin{problem}[\rev{Target density control in finite time}]
\label{prob:target_measure}
Given the system~\eqref{eq:ctrsys} and its density evolution~\eqref{eq:fwdpdf_cs},
an initial density $p_0$ and a finite horizon $T>0$, 
design a feedback law $u=\pi(t,x)$ such that the controlled density $p_c$ satisfies
\begin{equation}
\label{eq:target_density}
    p^\mathrm{c}_T = p_{\rm target},
\end{equation}
where $p_{\rm target}$ is a desired target density with support $\Omega_{\rm target}$.
\end{problem}
}
Note that formally the above problem addresses \ref{prob:set_convergence} if we choose $p_{\rm target} = \mathbf{1}_{\Omega_{\rm target}}$. 	This framework unifies classical and probabilistic control, providing an alternative approach to nonlinear control. Figure~\ref{fig:diffusion-models2d} illustrates this reformulation.

    \subsection*{Density Control using Denoising Diffusion}
  
A standard way to approach Problem~\ref{prob:target_measure} is to formulate it as an optimal control or optimal transport problem. 
However, such formulations often suffer from the curse of dimensionality, since the Liouville equation imposes an infinite-dimensional constraint on the density evolution. 
To address this issue, we consider an alternative viewpoint based on tracking a reference stochastic process, inspired by Denoising Diffusion Probabilistic Models (DDPMs). See \ref{sec:bgrnd} for a brief introduction to DDPMs.

Suppose we define a {\it forward stochastic process} $X^{\mathrm{f}}(t)$ such that its initial state $X^{\mathrm{f}}(0)$ is distributed according to the desired terminal density $p_{\rm target}$, 
and its marginal densities {\color{black} $p_t^{\mathrm{f}} = \mathbb{P}(X^{\mathrm{f}}(t) \in dx)$} {\color{black} asymptotically converge to a \emph{noise distribution} $p_n$ that is easy to sample from, as $t$ increases. Examples of possible choices of noise distributions are Gaussians and the uniform distribution.}
Intuitively, this process represents a diffusion that gradually destroys the structure in $p_{\rm target}$, transforming it into $p_n$. The density--control problem can then be recast as a \emph{density--tracking} problem: 
design a feedback law $u=\pi(t,x)$ such that the solution $p_t^{c}$ of the Liouville equation~\eqref{eq:fwdpdf_cs} satisfies
\begin{equation}
    p_t^{\mathrm{c}} = p_{T-t}^{\mathrm{f}}, \qquad t\in[0,T],
\end{equation}
with the initial condition $p_0^{c}=p_n$. 
In other words, the controlled system should deterministically reproduce the time--reversed evolution of the forward diffusion, driving the density from $p_n$ back to $p_{\rm target}$.

This formulation transforms the density--control problem from a high-dimensional constrained optimization into a regression problem as shown by the literature in score-matching diffusion models~\cite{song2020score}. The objective is to now learn or approximate a drift field that realizes the reverse density evolution which is computationally scalable and can be computed from data. We need to design feedback control $u(t,x)$ such that that the drift field in the reverse process is the dynamics~\eqref{eq:ctrsys} under the control $u(t,x)$ to solve Problem~\ref{prob:target_measure}.
    
	Let $\Omega$ be a compact domain in which the control system is confined. We will consider two instances of the denoising diffusion based control problem for different forward processes. 
	
	\rev{
	\begin{enumerate}
		\item \textbf{Algorithm 1 (Generic forward process):} The forward process $X^\mathrm{f}$ follows a stochastic differential equation (SDE) of the form \footnote{The {\color{black} scaling constant} $\sqrt{2}$ simplifies Equation~\eqref{eq:fwdpdf}.}:
        
		\begin{align}
        \label{eq:sde}
			\textup{d} X^\text{f} = V(X^\text{f})\textup{d}t + \sqrt{2} \, \textup{d} W + {\color{black} n(X^\mathrm{f}) \textup{d} Z},
		\end{align}
        \rev{In this choice of forward process, we use a generic auxiliary diffusion that transfers the state density to the noise distribution and does not inherit any structural information from the control system \eqref{eq:ctrsys}.}

		\item \textbf{Algorithm 2 (Forward process under system~\eqref{eq:ctrsys}):} The forward process evolves according to an SDE that inherits the structure of \eqref{eq:ctrsys}:
\begin{align}
	\label{eq:nhmsde}
	dX^\mathrm{f} = &-g_0(X^\mathrm{f})\textup{d}t+\sum_{i=1}^m v_i(X^\mathrm{f})g_i(X^\mathrm{f})\textup{d}t \\ \nonumber 
	&+  \sqrt{2}\sum_{i=1}^m g_i(X^\mathrm{f}) \odot \textup{d}W + n(X^\mathrm{f})\textup{d}Z .
\end{align}

\rev{Here, the functions $v_i:\mathbb{R}^d\to\mathbb{R}$ are \emph{noise-shaping feedback coefficients} used in the forward diffusion to ensure that the forward density converges to the prescribed noise distribution $p_n$, given the actuation constraint imposed by control channels in \eqref{eq:ctrsys}.  }

	\end{enumerate}
	}
	Here, $\odot$ emphasizes the fact that we use the Stratonovich integral. Additionally,  $Z(t)$ is the {\it boundary local time}, which acts only when the state reaches the domain $\partial \Omega$ to keep the trajectory in the prescribed domain \cite{pilipenko2014introduction}. Note that the functions $v_i(x)$ play the same role in Algorithm 2, as the function $V(x)$ in Algorithm 1. They should not be confused with the policy $\pi(t,x)$.
    
	From a technical point of view, the denoising--based control design described above raises two fundamental questions. 
First, under what conditions is such a time reversal of the density dynamics possible? 
Second, if the reversal exists, does there also exist a probability measure that conclusively satisfies Problem~\ref{prob:set_convergence}? 
To formalize these questions, we introduce the notion of a \emph{deterministic realization}.

\begin{definition}[Deterministic realization]
We say that there exists a \textbf{deterministic realization} associated with the control policy $\pi(t,x)$ 
if there exists a probability measure $\mathbb{P}$ on $C([0,T];\mathbb{R}^d)$ such that, for $\mathbb{P}$--almost every 
$\omega\in C([0,T];\mathbb{R}^d)$, the corresponding trajectory $\gamma_t$ satisfies
\begin{equation}
    \dot \gamma_t = g_0(\gamma_t) + \sum_{i=1}^m g_i(\gamma_t)\,\pi_i(t,\gamma_t),
\end{equation}
and its marginal distributions satisfy
\begin{equation}
    \mathbb{P}(\gamma_t \in dx) = p^{\mathrm{f}}_{T-t}(x), \qquad \forall\,t\in[0,T].
\end{equation}
\label{def:deterministic_realization}
\end{definition}

In other words, a deterministic realization ensures that the controlled trajectories generated by $\pi(t,x)$ 
produce a probability flow that exactly reproduces the time-reversed densities of the forward process, \rev{where the reverse process is \eqref{eq:ctrsys} for a suitable choice of time-reversing control law $\pi(t,x)$}. 
Given this notion, we can now pose the main question that will be investigated for each of the two algorithms presented in the following sections.

\begin{problem}[Existence of time-reversing control law and deterministic realization]
\label{prb3}
\rev{Given $p^\mathrm{f}$ is the distribution of the forward process of either Algorithm 1 or 2, does there exist a feedback control policy $u=\pi(t,x)$ such that the density of the controlled state \eqref{eq:ctrsys}} satisfies
\begin{equation}
    p^{\mathrm{c}}_t = p^{\mathrm{f}}_{T-t}, \qquad \forall\,t\in[0,T]?
\end{equation}
If such a control policy exists, does there also exist a deterministic realization of the corresponding reverse process in the sense of Definition~\ref{def:deterministic_realization}?
\label{problem:realization}
\end{problem}

Classical results on time reversal of diffusion processes \cite{anderson1982reverse,haussmann1986time,cattiaux1988time} establish conditions under which the reverse process of a stochastic system can be realized. A significant challenge addressed by these works is that the coefficients of the time--reversed dynamics are not globally Lipschitz, 
\rev{since the resulting control laws contain terms of the form $\nabla \log p^f_t$ and $\nabla \log p^f_0$ might not be Lipschitz.} Hence, one cannot use classical existence-uniqueness criteria to construct the reverse process over the time horizon. We seek to address a similar challenge of time reversal in our work. However, our work differs in two fundamental ways:
	
	\begin{itemize}
		\item The reverse process in our setting is {\it deterministic}, which is important for control applications where injecting noise into the system might be undesirable. 
		\item Unlike prior work that assumes the noise and control channels are the same, we relax this assumption for systems without drift.
	\end{itemize}

	\section{Algorithms for DDPM-based control }
	\label{sec:Algos}

In this section, we provide details on the two denoising diffusion based algorithms that we will use to solve the probability density control problem. The first algorithm is expected to {\color{black} approximately achieve the control objective better through the choice of the loss function}, while the second algorithm is computationally more scalable  as it transforms the reversing problem into a regression problem.

\subsection{Generic forward process - Algorithm 1}
In the SDE for the forward process~\eqref{eq:sde}, if we set $V(x) = \nabla \log p_n$, then the forward process provides a reference trajectory $p^{\textup{f}}_{T-t}$ in the set of probability densities such that $p^{\textup{f}}_T \approx p_n$ and $p_0 = p_{\text{target}}$. This follows from theory of diffusion processes and their semigroups \cite{bakry2013analysis}. Thus if there exists a controller $\pi(t,x)$ that can ensure system \eqref{eq:ctrsys} exactly tracks this density trajectory, we will have that $p_T^{\textup{c}} \approx p_{\text{target}}$. 

Such convergence is guaranteed by the Fokker Planck equation that governs the evolution of the density $p$ of the state  $X^f$ of~\eqref{eq:sde}: 
	\begin{align}
		\label{eq:fwdpdf}
		\partial_t p^\mathrm{f} = \Delta p^\mathrm{f} - \text{div}(V(x) p^\mathrm{f}) = \text{div}  ([\nabla \log p^\mathrm{f} -V(x)] p^\mathrm{f}) . 
	\end{align}
	where  $\Delta : = \sum_{i=1}^d \partial_{x^2_i}$ and $\text{div} : = \sum_{i=1}^d \partial_{x_i}$ denote the Laplacian and the  divergence operator, and $p_0 = p_\text{target}$.
	
	
	When $\Omega$ is a strict subset of $\mathbb{R}^d$, this equation is additionally supplemented by a boundary condition, known as the {\it zero flux} boundary condition
	\begin{equation}
		\vec{n}(x)\cdot ( \nabla p^f_t(x) - V(x)) = 0 ~~~ \mbox{on} ~~~ \partial \Omega ,
	\end{equation}
	where $\vec{n}(x)$ is the unit vector normal to the boundary $\partial \Omega$ of the domain $\Omega$.
	This boundary condition ensures that $\int_{\mathbb{R}^d} p_t(x)dx = 1$ for all $t\geq 0$. An advantage of considering the situation of bounded domain is that one can choose $V\equiv 0$ and the noise density can be taken to be the uniform density on $\Omega$. This property will be useful when solving our control problem.


\rev{To identify the controller $\pi(t, x)$, we seek to solve the following minimization problem:}
\begin{align}
	\min_{\rev{\pi:= NN(t,x,\theta)}} \frac{1}{T}\int\limits_0^T \text{KL}(p^{\textup{c}} \big|p^{\textup{f}}_{T-t}) \ \textup{d}t. \label{eqn:cost-continuous}
\end{align} 
subject to the dynamics \begin{align}\partial_t p^{\textup{c}} = -\text{div} (g(x,\texttt{NN}(t,x,\theta)) p^{\textup{c}}),
	\label{eqn:NNct}
\end{align} 
where $p^{\textup{c}}_0 = p_n$. 
Here $\text{KL}(p^{\textup{c}}\big|p^{\textup{f}})$ denotes the KL divergence between the density of the control system and the forward density. The KL divergence between any two densities $Q$ and $R$ defined on a set $\Omega$ is given by
\begin{align}
	\text{KL}(Q\big|R) &= \int\limits_{x \in \Omega} Q(x) \log\left(\frac{Q(x)}{R(x)} \right) ~\textup{d}x.
\end{align}
\rev{In practice, we need to estimate $\text{KL}(Q\big|R)$ using samples of $Q$ and $R$. For this, we use a kernel density estimation to approximate $KL(Q|R)$ using samples of $Q$ and $R$. See \cite{carrillo2019blob}.}
\subsection{Forward process under system~\eqref{eq:ctrsys} - Algorithm 2}

A drawback of the previous algorithm is that \eqref{eqn:cost-continuous}-\eqref{eqn:NNct} is a constrained optimization problem, with constraints \eqref{eqn:NNct}. This implies that the equation \eqref{eqn:NNct} needs to be solved in every pass of the gradient descent. To address this issue, we present an alternative algorithm.

Toward this end, we associate with each vector-field $g_i:\mathbb{R}^d \rightarrow \mathbb{R}^d$
a first order differential operator $Y_i$, given by
\begin{align*}
	Y_i ~ \cdot = \sum_{j=1}^d g^j_i(x) \partial_{x_j}
\end{align*}

where $Y_i^*$ is the formal adjoint of the operator $Y_i$ and is given by
\[Y_i^* = \sum_{j=1}^d -\frac{\partial}{\partial x_j} (g_i^j  ~~~\cdot) ~~~ \]

The operator $Y_i^*$ is the formal adjoint of $Y_i$ can be seen from the computation

\[ \int_{\Rd} g(x) Y_if(x)dx = \int_{\Rd} f(x)Y^*_ig(x) dx \]
for every smooth compactly supported function $f,g \in C^{\infty}_c(\R^d)$, by applying integration by parts.

Using these operators, the continuity equation \eqref{eq:fwdpdf_cs} can be expressed as
\begin{equation}
	\label{eq:kolmog2}
	\partial_t p^f  = Y^*_0p^c + \sum_{i=1}^m  Y_i^*(\pi_i(t,x) p^c)  
\end{equation}
\textbf{Forward process under system~\eqref{eq:ctrsys}.}
While in the previous algorithm, we retained \eqref{eq:sde} as the forward process, in this section we take \eqref{eq:nhmsde} as the forward process. \rev{By converting the Stratonovich SDE \eqref{eq:nhmsde} to its equivalent Itô form and applying the standard infinitesimal generator formula for diffusion processes \cite[Theorem 7.3.3]{oksendal2013stochastic}}, the  distribution $p^f$ of the process \ref{eq:nhmsde} evolves according to 

\begin{align}
	\partial_t p^f = -Y^*_0p^f + \sum_{i=1}^m  (Y_i^*)^2 p^f  +\sum_{i=1}^m  Y_i^*(v_i(x) p^f)  
	\label{eq:kfe}
\end{align}
First, an important requirement for the forward process is that it has the equilibrium distribution $p_{n}$. For this purpose, we need to control the long term behavior of the probability distribution of the forward process \eqref{eq:nhmsde}.

We will assume that we can construct control laws  $v_i(x)$, such that above PDE can be transformed to the form,
\begin{equation}
	\partial_t p^f  = -Y^*_0p^f -\sum_{i=1}^m  Y_i^*Y_i p^f +\sum_{i=1}^m  Y_i^* ( w_i(x)p^f)
\end{equation}
for some choice of $w_i(x)$, so that $p_n$ is it's equilibrium solution.
We will show in Section \ref{sec:results} that for each $p_n$ such a choice of $v_i(x)$ and $w_i(x)$ always exists, whenever the system is driftless. When the system is a linear time invariant system, one can choose $p_n$ from a class of Gaussian distributions by setting $w_i(x) \equiv 0$. 

We can rewrite this equation in the form
\begin{align}
	\label{eq:transf}
	\partial_t p^f  &=  -Y_0p^f-\sum_{i=1}^m  Y_i^*\left(\dfrac{Y_i p^f}{p^f}\right) p^f +\sum_{i=1}^m  Y_i^* ( w_i(x)p^f)  
\end{align}
Therefore, we have \textbf{Nonholonomic score loss}.
This gives us the nonholonomic score matching objective where we want to approximate the nonholonomic score $\frac{Y_ip^f}{p^f}$ using the neural network $s^{\theta}_i(t,x)$
\begin{eqnarray}
	\min_{\theta}\int_{0}^T \int_{\mathbb{R}^d}\|s^{\theta}(t,x) -  \nabla_c \log p^f_t(x) \|^2 p^f_{T-t}(x) dx
\end{eqnarray}
where $\nabla_c $ is \textbf{nonholonomic gradient operator} defined by 
\begin{align}
	\nabla_c  f & = [ Y_1f,..., Y_mf  ]^T  \nonumber \\
	&=[\sum_{j=1}^d g^j_1(x) \partial_{x_j }f(x),...,\sum_{j=1}^d g^j_m(x) \partial_{x_j } f(x)]^T
\end{align}
{\color{black} The objective above cannot be optimized in a sample-based manner since it involves the score term $\nabla_x \log p_t^f(x)$. Alternatively, applying integration by parts yields an equivalent score-matching loss that can be written as an expectation involving only the neural network $s^{\theta}(t,x)$ (and known quantities), eliminating the explicit dependence on $\nabla_x \log p_t^f(x)$. This idea is due to \cite{hyvarinen2005estimation}. However, this approach yielded degenerate results in our case. Therefore, we used a kernel density approximation to estimate the score from the samples.}

If we use the exact score $\nabla_c  \log p^f_{T-t}$ to execute the control $\pi(t,x) = \nabla_c  \log p^f_{T-t} -v(x)$ in the reverse process \eqref{eq:ctrsys} and $p^c \approx p^f_T  $, then substituting in equation we get that 
$p^c_t = p^f_{T-t}$ for all $ t\in [0,T]$ and the continuity equation \eqref{eq:kolmog2} becomes

\begin{align}
\partial_t p^c
&= Y_0^* p^c
 + \sum_{i=1}^m Y_i^*\!\left(\frac{Y_i p^f_{T-t}}{p^f_{T-t}}\,p^c\right)
 - \sum_{i=1}^m Y_i^*\!\left(w_i(x)\,p^c\right).
\label{eq:24_retyped}
\end{align}

\section{Analysis}
\label{sec:results}

The primary objective of this section is to establish a rigorous mathematical foundation for the deterministic realization of diffusion-based control processes (Problems \ref{prob:set_convergence}, \ref{prob:target_measure} and \ref{problem:realization}). 
\rev{In order to facilitate the analysis, we introduce a set of standard regularity assumptions ensuring that the associated PDEs are sufficiently non-degenerate. These include assumptions on controllability (Assumptions~\ref{asmp2} and~\ref{asmp:lti}), regularity of the domain (Assumption~\ref{asmp1}), and smoothness of the noise distribution (Assumption~\ref{asmp3}). These assumptions are analytical in nature and are imposed to guarantee well-posedness and invertibility properties of the operators used in the time-reversal analysis. They are not intended as additional design constraints that must be verified algorithmically.} 
\rev{
For each algorithm presented in the previous section, we will proceed with the analysis in with the following steps :
\begin{enumerate}
	\item \textbf{Step 1: Mathematical Preliminaries} – We introduce key concepts from geometric control theory and PDE theory, including weak solutions to the Fokker-Planck equation and the continuity equation.
    \item \textbf{Step 2: Convergence of Forward Process to Noise Distribution} - We ensure that the forward process will converge to the noise distribution. See Theorem \ref{thm:driftnoise} and Theorem \ref{thm:linstab}.
    \item \textbf{Step 3: Existence of Time Reversing Control} - Deriving regularity results of the corresponding Fokker-Planck equations, we establish conditions under which a deterministic feedback law can enforce time-reversed evolution of probability densities. See Lemma \ref{lem:extra}, Proposition \ref{prop:realdrift} and Proposition \ref{prop:linreadl}.
	\item \textbf{Step 4: Deterministic Realization of Reverse Process} -
    Using the superposition principle we establish the deterministic realization of the reverse process. See Theorem \ref{thm:detrel}, Theorem \ref{thm:detcontaffalg2}, and  
    Theorem \ref{thm:revglor2}.
	\item \textbf{Step 5: Target set convergence} – Using the deterministic realization result we prove that density control implies control to target sets in probability. See Corollary \ref{cor:conafftargset1}, Corollary \ref{cor:conafftargset2} and Corollary \ref{cor:lintargset}.
\end{enumerate}
}

\subsection{Analysis of Algorithm 1 (Driftless Systems)}

We address the well-posedness of problem \eqref{eqn:cost-continuous} from a theoretical point of view. The nonlinear dynamics of system~\eqref{eq:ctrsys} play a crucial role in the feasibility of this problem. In typical DDPM problems for generative modeling, noise can be added in all directions for both the forward and reverse processes. Therefore, score-matching techniques can be employed to learn the reverse process. However, for the feedback control problem in consideration, if the dynamical system is not fully actuated, that is $d=m$, then the feasibility of problem~\eqref{eqn:cost-continuous} is not clear. Here, we address the case when such a controller can be obtained.

Let $\mathcal{V} = \lbrace g_0, g_1,...,g_m \rbrace$, $m \leq d$, be a collection of smooth vector fields $g_i : \mathbb{R}^d \rightarrow \mathbb{R}^d$. Let $[f,g]$ denote the Lie bracket operation between two vector fields $f: \mathbb{R}^d \rightarrow \mathbb{R}^d$ and $g: \mathbb{R}^d \rightarrow \mathbb{R}^d$, given by
where $\partial_{i}$ denotes partial derivative with respect to coordinate $i$.
\begin{equation}
[f,g]_i = \sum_{j=1}^d f^j \partial_{x_j} g^i - g^j \partial_{x_j} f^i.
\end{equation}

We define $\mathcal{V}^0 =\mathcal{V}$. For each $i \in \mathbb{Z}_+$, we define in an iterative manner the set of vector fields $\mathcal{V}^i = \lbrace [g, h]; ~g \in \mathcal{V}, ~h \in \mathcal{V}^{j-1}, ~j=1,...,i \rbrace$.  We will assume that the collection of vector fields $\mathcal{V}$ satisfies following condition the {\it Chow-Rashevsky} condition \cite{agrachev2019comprehensive} (also known as {\it H\"{o}rmander's condition} \cite{bramanti2014invitation}).
For the purposes of this section, we will need the following assumption.

\begin{assumption}
\label{asmp2}
\textbf{(Controllable driftless system}) There is no drift in the system ($g \equiv 0$).
The Lie algebra generated by the vector fields $\mathcal{V}$, given by $\cup_{i =0}^r \mathcal{V}^i$, has rank $N$, for sufficiently large $r$.
\end{assumption} 

We will need another assumption on the regularity of $\Omega$. Towards this end an {\it admissible curve} $\gamma:[0,1] \rightarrow \Omega $ connecting two points $\x ,y \in \Omega$ is a Lipschitz curve in $\Omega$ for which there exist essentially bounded functions $u_i:[0,T]\rightarrow \mathbb{R}$ such that $\gamma$ is a solution of \eqref{eq:ctrsys} with $\gamma(0)=x$ and $\gamma(1) = y$.

\begin{definition}
\label{def:epsdel}
The domain $\Omega \subset \mathbb{R}^d$ is said to be  \textbf{non-characteristic} if for every $x \in \partial \Omega$, there exists a admissible curve $\gamma(t)$ such $\gamma(0) = x$ and $\gamma(t)$ is not tangential to $\partial \Omega$ at $x$.
\end{definition}
This definition imposes a regularity on the domain $\Omega$, which will be needed to apply the results of \cite{elamvazhuthi2023density} to conclude the invertibility of the operator $\sum_{i=1}^m Y_i^*Y_i$.

\begin{assumption}
\label{asmp1}
\textbf{(Compact Domain)}
In the compact case, we will make the following assumptions on the boundary of the domain $\Omega$.
\begin{enumerate}
	\item The domain $\Omega$ is compact and has a $C^2$ boundary $\partial \Omega$.
	\item The domain $\Omega$ is non-characteristic in the sense of Definition \ref{def:epsdel}.
\end{enumerate}
\end{assumption}

\begin{assumption}
\label{asmp3}
\textbf{(Smoothness of Density)}
The densities $p_{\rm target}, p_n \in C^{\infty}(\bar{{\Omega}})$ \rev{and $p_n$ is strictly positive on ${\Omega}$}.

\end{assumption}

We will say that $p \in L^2(\Omega)$ is a probability density, if $\int_{\Omega}p(x)dx =1 $ and $p$ is non-negative almost everywhere on $\Omega$.

Given these definitions we will show that we can find a feedback controller $u_i = \pi_i(t,x)$ such that the dynamical system tracks the forward process in reverse.

In order to state our main result, we will need to define some mathematical notions that will be used in this section. \rev{\rev{These notions provide the functional-analytic framework required to define the operators associated with the forward diffusion and to establish their invertibility and regularity properties, which are essential for the time-reversal and deterministic realizability analysis.}} 
We define $L^2(\Omega)$ as the space of square integrable functions over $\Omega$, where $\Omega \subset \mathbb{R}^d$ is an open, bounded and connected subset. The set of continuous functions $p$ for which $p_t$ lies in a Hilbert space $X$ will be referred to using $C([0,1];X)$. 

\textbf{Notation}
For a given real-valued function $a \in L^{\infty}(\Omega)$, $L^2_a(\Omega)$ refers to the set of all functions $f$ such that the norm of $f$ is defined as
$
\|f\|_a: = (\int_\Omega|f(x)|^2a(x)dx )^{{1/2}}< \infty.
$
We will always assume that the associated function $a$ is uniformly bounded from below by a positive constant, in which case the space $L^2_a(\Omega)$ is a Hilbert space with respect to the weighted inner product $\langle \cdot , \cdot \rangle_{a} :  L^2_a(\Omega) \times L^2_a(\Omega) \rightarrow \mathbb{R}$, given by
$
\langle f, g\rangle_{a} = \int_{\Omega} f(x){g}(x)a(x)dx
$
for each $f,g \in L^2_a(\Omega)$. When $a =\mathbf{1}$, where $\mathbf{1}$ is the function that takes the value $1$ almost everywhere on $\Omega$, the space $L^2_a(\Omega)$ coincides with the space $L^2(\Omega)$. Due to the assumptions on $a$, the spaces $L^2(\Omega)$ and $L^2_a(\Omega)$ are isomorphic and the same is true for the spaces $C([0,T];L^2(\Omega))$ and $C([0,T];L^2_a(\Omega)$. We will use this fact repeatedly. For a function $f \in L^2(\Omega)$ and a given constant $c$, we write $f \geq c$ to imply that $f$ is real-valued and $f(x)\geq c$ for almost every (a.e.) $x \in \Omega$. Let $L^{2}_{a,\perp}(\Omega) := \{ f \in L^2_a(\Omega);\int_{\Omega} f(x)dx = 0\}$ be the subspace of functions in $L^2_a(\Omega)$ that integrate to $0$. 
We define the {\it weighted horizontal Sobolev space} $WH^1_a(\Omega) = \big \lbrace f \in L^2(\Omega): Y_i(af) \in L^2(\Omega) \text{ for } 1 \leq i \leq m \big \rbrace$. We equip this space with the weighted horizontal Sobolev norm $\|\cdot\|_{WH^1_a}$, given by $\|f\|_{WH^1_a} = \Big( \|f\|^2_{2} + \sum_{i=1}^n\|  Y_i(af)\|^2_{2}\Big)^{1/2} \nonumber$ for each $f \in WH^1_a(\Omega)$. Here, the derivative action of $Y_i$ on a function $f$ is to be understood in the distributional sense. When $a=\mathbf{1}$, where $\mathbf{1}$ is the constant function that is equal to $1$ everywhere, we will denote $WH_a^1(\Omega)$ by $WH^1(\Omega)$.

Before we present our stability analysis, we give some more preliminary definitions.
Given $a\in L^{\infty}(\Omega)$ such that $a \geq c$ for some positive constant $c$, and $\mathcal{D}(\omega_a) = WH^1_a(\Omega)$, we define the sesquilinear form $\omega_a:\mathcal{D}(\omega_a) \times \mathcal{D}(\omega_a) \rightarrow \mathbb{R}$ as: $
\omega_a(u,v) = \sum\limits_{i= 1}^m\int\limits_{\Omega}  \frac{1}{a(x)} Y_i ( a (x) u(x)) \cdot Y_i ( a (x){v}(x))dx $.
for each $u \in \mathcal{D}(\omega_a)$. 
We associate with the form $\omega_a$ an operator $\mathcal{A}^a :\mathcal{D}(\mathcal{A}^a)  \rightarrow  L^2_a(\Omega)$, defined as $ \mathcal{A}^au = f$, if $\omega_a(u,v) = \langle f , v \rangle_a $ for all $v \in \mathcal{D}(\omega_a)$ and for all $u \in \mathcal{D}(\mathcal{A}^a) = \lbrace g \in \mathcal{D}(\omega_a): ~ \exists f \in L^2_a(\Omega) ~ \text{s.t.} ~  \omega_a(g,v)= \langle f, v \rangle_a ~ \forall v \in \mathcal{D}(\omega_a) \rbrace$. The operator $\mathcal{A}^a = -\sum_{i=1}Y_i^*(\frac{1}{a(x)}Y_i (a(x) \cdot))$. \rev{See Section \ref{subsec:comp} for the detailed computation establishing this fact.} We will use an alternative expression of the operator that will be useful in the computations later, and also connect to the operator classical Fokker-Planck operators.

We wish to express the operator $\mathcal{A}^a$ in a form similar to the well known Fokker-Plack operator $\Delta u = \nabla \cdot (V(x) u)$. Toward this end, Let $\mathcal{A}^a u =f$. If $a = \frac{1}{p_n}$ is the noise distribution, by Green's theorem we get,

The last expression must hold for all $v$. Therefore, formally, we can conclude that $u$ satisfies the boundary condition,
\begin{equation}
\sum_{i=1}^m\vec{n} \cdot (g_i Y_i ( \frac{u}{p_n(x)})) = 0
\end{equation}
Therefore, the PDE
\begin{eqnarray}
\label{eq:Mainsysan} 
\partial_t p =  - \sum_{i=1}^m  Y_i^*Y_i p   +\sum_{i=1}^m  Y_i^* ( p \frac{Y_ip_n}{p_n} )  ~~ in  ~~ \Omega \times [0,T], 
\end{eqnarray}
with zero flux boundary condition
\begin{equation}
\sum_{i=1}^m\vec{n} \cdot (g_i Y_i ( \frac{p}{p_n})) = 0
\label{eq:zbc}
\end{equation}
can expressed as an abstract ODE given by 
\begin{equation}
\dot{p} = \mathcal{A}^a p
\end{equation}

When $Y_i = \partial_{x_i}$ are coordinate vector-fields, the operator $\mathcal{A}^a$ is becomes the usual Fokker-Planck operator 
\[\Delta -\nabla \cdot (\nabla \log p_n ~~) =\Delta -\nabla \cdot (\frac{\nabla p_n}{p_n}),\] and for this special case we will denote the operator by $\B^a$.

We will need some different notions of solutions for the analysis further ahead. We will say that $p \in C([0,T]; L^2_a(\Omega))$ is a \textbf{mild solution} to \eqref{eq:Mainsysan} if 
\[ p_t = p_0 - \mathcal{A}^a\int_0^t p_sds\]
for all $ t \in (0,T]$.
Let $\pi:[0,T] \times \Rd \rightarrow \R^m$ be a control law. 

We will say that $p^c_t$ is a \textbf{classical solution} of \eqref{eq:fwdpdf_cs} if $p^c_t \in C^1((0,T] \times \Rd) $ if \eqref{eq:fwdpdf_cs} holds pointwise. Many times, we will need a weaker notion of solutions for this equation.  Toward this end, we will say that 
that $p^c_t$ is a \textbf{weak solution} to \eqref{eq:fwdpdf_cs} if
\begin{equation}
\label{eq:wkctty}
\int_0^1 \int_{\Rd} \big (\partial_t \phi(t,x) + g(x,\pi(t,x)) \cdot \nabla_x \phi(t,x) \big ) p^c_t(x)dxdt = 0
\end{equation}
for all $\phi \in C^{\infty}_c(\Rd \times (0,T))$.

Given these definitions, a number of properties of the {\it Nonholonomic} Fokker-Planck equation (NFE) \eqref{eq:Mainsysan} have been established in \cite{elamvazhuthi2023density}. For instance, it is known that the operator $\mathcal{A}^a$ generates a semigroup $T(t)$ of operators such that the solution satisfies $p_t = T(t) p_0$. Moreover, the asymptotic stability estimate holds 
\begin{equation}
\|\T(t)p_0-p_n\|_a \leq Me^{-\omega t} \|p_0 - p_n\|_a
\end{equation}
for all $t >0$. This is particularly important for the control approach proposed in the paper, as it gives a control law to transport the system towards the noise distribution.



The operator $\mathcal{A}^a $ is invertible. Using this, we can establish that any sufficiently regular trajectory on the set of probability densities can be tracked using the control system in reverse.

First, we state a general result that given any curve on the set of probability densities, for controllable driftless systems, one can find a control that exactly tracks the curve.

\begin{lemma}[\textbf{Exact tracking of positive densities}]
\label{lem:extrafull}
Given Assumption \ref{asmp2} and \ref{asmp1}, suppose $p^{\text{ref}} \in C([0,T];L^2(\Omega))$, $\dot{p}^{\text{ref}} \in C([0,T];L^2(\Omega))$, $ p^{\text{ref}}_t $ is a probability density and  $  p^{ref}_t > 0$,  for all $t\geq 0$. 
Then there exists a control law $\pi$ of the form
\[\pi_i = \frac{Y_i (\mathcal{A}^1)^{-1} \dot{p}^{ref}_t }{p^{ref}_{t}}\]
for all $t \in [0,T]$ and a weak solution $p^{\textup{c}}$ of \eqref{eq:fwdpdf_cs} satisfies 
\[p^{\textup{c}}_t = p^{\text{ref}}_t \text{ for all } t\in [0,T]\] 
\end{lemma}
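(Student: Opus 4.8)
The plan is to verify directly that the proposed control law $\pi_i = Y_i (\mathcal{A}^1)^{-1}\dot p^{\text{ref}}_t / p^{\text{ref}}_t$ makes $p^{\text{ref}}$ itself a weak solution of the continuity equation \eqref{eq:fwdpdf_cs}, i.e. of \eqref{eq:wkctty}. First I would check that the control law is well-defined: since $p^{\text{ref}}_t$ is a probability density, $\dot p^{\text{ref}}_t \in C([0,T];L^2(\Omega))$, and in particular $\int_\Omega \dot p^{\text{ref}}_t\,dx = \tfrac{d}{dt}\int_\Omega p^{\text{ref}}_t\,dx = 0$, the function $\dot p^{\text{ref}}_t$ lies in the range of the invertible operator $\mathcal{A}^1$ (whose range is the mean-zero subspace $L^2_\perp(\Omega)$ by the invertibility statement quoted from \cite{elamvazhuthi2023density}). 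Hence $q_t := (\mathcal{A}^1)^{-1}\dot p^{\text{ref}}_t$ is well-defined in $\mathcal{D}(\mathcal{A}^1) \subset WH^1(\Omega)$, so $Y_i q_t \in L^2(\Omega)$ and $\pi_i(t,\cdot) = Y_i q_t / p^{\text{ref}}_t$ is a measurable function (here $p^{\text{ref}}_t > 0$ is used, and one should note $p^{\text{ref}}$ is bounded below on the compact $\bar\Omega$ by continuity so the quotient is in $L^2$).

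Next I would substitute into the weak form. For a test function $\phi \in C^\infty_c(\Omega\times(0,T))$, the flux term is $\sum_i\int g_i(x)\,\pi_i(t,x)\,p^{\text{ref}}_t(x)\cdot\nabla\phi = \sum_i \int (Y_i q_t)(x)\,(Y_i\phi)(t,x)\,dx$ after pairing $g_i\cdot\nabla\phi$ with the definition of $Y_i\phi$ (using the driftless assumption $g_0\equiv 0$, so there is no $Y_0$ term). By the definition of the sesquilinear form $\omega_1$ and the operator $\mathcal{A}^1 = -\sum_i Y_i^* Y_i$, this equals $\omega_1(q_t,\phi(t,\cdot)) = \langle \mathcal{A}^1 q_t,\,\phi(t,\cdot)\rangle = \langle \dot p^{\text{ref}}_t,\,\phi(t,\cdot)\rangle = \int_\Omega \dot p^{\text{ref}}_t(x)\,\phi(t,x)\,dx$. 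Integrating in $t$ and integrating the $\partial_t$ term by parts in time (legitimate since $\dot p^{\text{ref}} \in C([0,T];L^2)$ and $\phi$ is compactly supported in $(0,T)$), the $\int_0^T\int_\Omega \partial_t\phi\, p^{\text{ref}}_t$ term cancels $\int_0^T\int_\Omega \phi\,\dot p^{\text{ref}}_t$, and \eqref{eq:wkctty} holds. Thus $p^{\text{ref}}$ is a weak solution of \eqref{eq:fwdpdf_cs} with this feedback, and since $p^{\text{ref}}_0$ is the prescribed initial density, $p^c_t = p^{\text{ref}}_t$ for all $t$.

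The main obstacle — and the place where Assumptions~\ref{asmp2} and~\ref{asmp1} are genuinely needed — is justifying that $\mathcal{A}^1$ is invertible with range exactly the mean-zero subspace, and that $q_t = (\mathcal{A}^1)^{-1}\dot p^{\text{ref}}_t$ depends continuously on $t$ (so that $\pi$ is at least measurable in $(t,x)$, as required by the problem statement). Invertibility is where the Chow--Rashevsky rank condition enters: it gives a (weighted) Poincaré-type inequality making $\omega_1$ coercive on $L^2_\perp(\Omega)$, and the non-characteristic boundary condition (Assumption~\ref{asmp1}) is what lets one invoke the regularity/invertibility result of \cite{elamvazhuthi2023density} up to the boundary. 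Continuity of $q_t$ in $t$ follows from boundedness of $(\mathcal{A}^1)^{-1}$ together with continuity of $t\mapsto\dot p^{\text{ref}}_t$. I would also remark that $q_t\in WH^1(\Omega)$ only guarantees $Y_i q_t\in L^2$, so $\pi$ is an $L^2$-in-space, continuous-in-time feedback rather than a classical one; this is exactly the regularity gap flagged in the introduction (coefficients of time-reversed dynamics need not be Lipschitz), and the deterministic realization via the superposition principle in the subsequent theorem is what handles it.
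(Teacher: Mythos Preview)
Your proposal is correct and follows essentially the same argument as the paper's own proof: define $q_t=(\mathcal{A}^1)^{-1}\dot p^{\text{ref}}_t$, plug the control law into the weak form \eqref{eq:wkctty}, recognize the flux term as the sesquilinear form $\omega_1(q_t,\phi)=\langle \mathcal{A}^1 q_t,\phi\rangle=\langle \dot p^{\text{ref}}_t,\phi\rangle$, and cancel against the time term via integration by parts. Your write-up is in fact somewhat more careful than the paper's in two respects: you explicitly justify that $\dot p^{\text{ref}}_t$ lands in the mean-zero subspace (the range of $\mathcal{A}^1$), and you spell out where Assumptions~\ref{asmp2}--\ref{asmp1} enter via the Poincar\'e-type coercivity and boundary regularity needed for Proposition~\ref{prop:invA}.
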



Given the general result on tracking of probability densities, we can track a forward process in the reverse direction. This is the subject of the following Lemma, which addresses part of Problem \ref{prb3} for Algorithm 1.

\begin{lemma}[\textbf{Tracking the Holonomic Fokker-Planck Equation}]
\label{lem:extra}
Given Assumption \ref{asmp1}, 
suppose $p_0  \in L^2(\Omega)$. Let $p^{f}$ be the mild solution of \eqref{eq:Mainsysan} for $\mathcal{A}^a =\mathcal{B}^a$, then there exists a control law $\pi: [0,T] \times \Rd \rightarrow \R^m $ of the form
\[\pi_i(t,\cdot) = \frac{Y_i (\mathcal{A}^{1})^{-1} \dot{p}^f_{T-t} }{p^f_{T-t}}\]
such that a weak solution $p^{\textup{c}}$ of the \eqref{eq:fwdpdf_cs} satisfies 
\[p^{\textup{c}}_t = p^{\textup{f}}_{T-t} \text{ for all } t\in [0,T]\] 
\end{lemma}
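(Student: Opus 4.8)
The plan is to obtain Lemma~\ref{lem:extra} as a corollary of the general tracking result Lemma~\ref{lem:extrafull}, applied to the time--reversed reference curve $q_t := p^f_{T-t}$, $t\in[0,T]$. Lemma~\ref{lem:extrafull} (under Assumptions~\ref{asmp2} and~\ref{asmp1}) asserts that any curve $q$ with $q\in C([0,T];L^2(\Omega))$, $\dot q\in C([0,T];L^2(\Omega))$, each $q_t$ a probability density, and $q_t>0$, is tracked exactly by the feedback $\pi_i = Y_i(\mathcal A^1)^{-1}\dot q_t/q_t$, yielding a weak solution $p^{\mathrm c}$ of~\eqref{eq:fwdpdf_cs} with $p^{\mathrm c}_t=q_t$ on $[0,T]$. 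Taking $\dot q_t := (d/dt)\,p^f_{T-t}$ (the derivative along the reverse time $t$, which is the meaning of the symbol $\dot p^f_{T-t}$ in the statement), this feedback is exactly the claimed one and $p^{\mathrm c}_t = p^f_{T-t}$ follows. Thus the entire argument reduces to checking the four hypotheses for $q$; the substantive analytic content --- invertibility of $\mathcal A^1$ on mean-zero functions and the construction of the controlled weak solution --- is already carried by Lemma~\ref{lem:extrafull}.

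For those hypotheses, recall that with $\mathcal A^a=\mathcal B^a$ equation~\eqref{eq:Mainsysan} is the classical Fokker--Planck equation $\partial_s p^f=\Delta p^f-\mathrm{div}(\nabla\log p_n\, p^f)$ on the compact $C^2$ domain $\Omega$ with zero--flux boundary condition~\eqref{eq:zbc}. Under Assumptions~\ref{asmp1} and~\ref{asmp3}, $\mathcal B^a$ is minus a uniformly elliptic divergence--form operator with smooth, nondegenerate coefficients, hence it generates an analytic semigroup $T(s)$ on $L^2(\Omega)$: strong continuity gives $q\in C([0,T];L^2(\Omega))$, and the analytic smoothing bound $\|\mathcal B^a T(s)\|\le C s^{-1}$ together with $\partial_s(T(s)p_0)=\mathcal B^a T(s)p_0$ gives $\dot q\in C([0,T);L^2(\Omega))$ with $\dot q_t=-\mathcal B^a p^f_{T-t}$. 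Mass conservation $\int_\Omega q_t\,dx=\int_\Omega p_0\,dx=1$ follows by integrating~\eqref{eq:Mainsysan} against $\mathbf 1$ and using~\eqref{eq:zbc} (as noted below~\eqref{eq:fwdpdf}), and nonnegativity is preserved by the Fokker--Planck semigroup, so each $q_t$ is a probability density; strict positivity $q_t>0$ on $\Omega$ for $t<T$ is the parabolic strong maximum principle / Harnack inequality applied to $p^f_s$ at $s>0$, using $p_0\ge0$ of unit mass. This positivity also keeps the denominator of $\pi_i$ bounded below on compact time subintervals, and since $\int_\Omega\dot q_t\,dx=0$, the vector $\dot q_t$ lies in the mean-zero subspace on which $\mathcal A^1$ is invertible, so $Y_i(\mathcal A^1)^{-1}\dot q_t$ is well defined.

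The hypotheses of Lemma~\ref{lem:extrafull} are not literally met only at the endpoint $t=T$, i.e.\ forward time $s=0$: for merely $L^2$ data $p_0$ we have $\dot q\in C([0,T);L^2(\Omega))$ rather than on the closed interval, and $q_T=p_0$ need not be strictly positive. I would handle this by applying Lemma~\ref{lem:extrafull} on each subinterval $[0,T-\delta]$, $\delta>0$, where all four hypotheses hold, obtaining the same feedback formula $\pi_i=Y_i(\mathcal A^1)^{-1}\dot q_t/q_t$ for every $\delta$ and a weak solution with $p^{\mathrm c}_t=q_t$ there; letting $\delta\downarrow0$ and using that~\eqref{eq:wkctty} tests only against functions compactly supported in the open interval $(0,T)$ produces a weak solution on $(0,T)$ with $p^{\mathrm c}_t=p^f_{T-t}$, which extends to $t\in\{0,T\}$ by the $C([0,T];L^2)$ continuity of $q$. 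I expect this endpoint step to be the main obstacle --- reconciling the merely-$L^2$ regularity of $p_0$ permitted in the statement with the $C^1$-in-time regularity that Lemma~\ref{lem:extrafull} demands of its reference --- and it is precisely where the instantaneous smoothing of the Fokker--Planck flow is used; everything else is either quoted from the excerpt or a classical parabolic fact.
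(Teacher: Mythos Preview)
Your proposal is correct and follows essentially the same approach as the paper: verify the hypotheses of Lemma~\ref{lem:extrafull} for the time-reversed curve $q_t=p^f_{T-t}$ using (i) strict positivity of $p^f_s$ for $s>0$, (ii) analytic-semigroup regularity giving $\dot p^f\in C((0,T];L^2)$, and (iii) the mean-zero property $\langle\mathbf 1,\dot p^f_t\rangle=0$ ensuring $(\mathcal A^1)^{-1}\dot p^f_{T-t}$ is well defined. The paper's proof is terser and stops after recording these three facts; your treatment of the endpoint $t=T$ via the $[0,T-\delta]$ approximation and the compact-support test functions in \eqref{eq:wkctty} makes explicit a step the paper leaves implicit.
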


The goal of the next result is to completely address Problem \eqref{prb3} for Algorithm 1. The idea is that in general, we do not know if the the vector field generated by the time-reversing control laws are Lipschitz. Hence, one cannot construct a probability measure by using the flow map associated with the vector-field. Nevertheless, the following result from optimal transport theory \cite{ambrosio2014continuity} enables the construction of such a measure. 

\begin{theorem}(\cite{ambrosio2014continuity})
\label{thm:sup}
\textbf{(Superposition solution)} 
Let $t \mapsto \mu_t \in \mathcal{P}(\Rd)$ be such that $\mu_t$ is a weak solution to the continuity equation according to \eqref{eq:wkctty} and 
\[ \int_0^T \int_{\Rd} \frac{|v(t,x)|}{1+|x|}d\mu_t(x) dt < \infty \]
Then there exist a probability measure $\mathbb{P} \in \mathcal{P}(\Rd \times C([0,T];\Rd))$ such that 
\begin{equation}
	\dot{\gamma}(t) = v(t,\gamma(t)),~~ \gamma(0) = y
\end{equation}
for $\mathbb{P}$ almost every $(y,\gamma) \in \Rd \times C([0,T];\Rd)$, and $\mathbb{P}(\gamma_t \in dx) = d\mu_t(x)$, for all $t \in [0,T]$.
\end{theorem}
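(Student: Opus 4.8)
The plan is to prove this superposition principle for the continuity equation (due to \cite{ambrosio2014continuity}, in the spirit of Smirnov's decomposition of normal $1$-currents and DiPerna--Lions flow theory) by a regularization-and-compactness argument. First I would mollify in space: fixing a standard mollifier $\rho_\varepsilon$ on $\Rd$, set $\mu_t^\varepsilon := \mu_t * \rho_\varepsilon$ and $E_t^\varepsilon := (v_t \mu_t) * \rho_\varepsilon$, so that $(\mu_t^\varepsilon, E_t^\varepsilon)$ still satisfies the continuity equation distributionally, $\mu_t^\varepsilon$ is a smooth strictly positive density, and $v_t^\varepsilon := E_t^\varepsilon / \mu_t^\varepsilon$ is smooth in $x$ and locally bounded. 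Hence the ODE $\dot\gamma = v^\varepsilon_t(\gamma)$ admits a flow $\mathbf{X}^\varepsilon(t,\cdot)$, and I would define $\mathbb{P}^\varepsilon \in \mathcal{P}(\Rd \times C([0,T];\Rd))$ as the push-forward of $\mu_0^\varepsilon$ under $y \mapsto (y, \mathbf{X}^\varepsilon(\cdot,y))$. By the method of characteristics for the (now smooth) linear transport equation, $\mathbb{P}^\varepsilon$ is concentrated on solutions of the $\varepsilon$-ODE and has time-$t$ marginal $\mu_t^\varepsilon$. A point to handle here is that $v^\varepsilon_t$ need not be globally Lipschitz, so I would use the superlinear weight: the convolution estimate $\frac{|(v_t\mu_t)*\rho_\varepsilon|(x)}{1+|x|} \lesssim \big(\tfrac{|v_t|\mu_t}{1+|\cdot|}\big)*\tilde\rho_\varepsilon(x)$ gives $\sup_\varepsilon \int_0^T\!\!\int \frac{|v^\varepsilon_t|}{1+|x|}\,d\mu^\varepsilon_t\,dt < \infty$, which via the Gronwall bound $1+|\gamma_t| \le (1+|\gamma_0|)\exp\!\big(\int_0^t \frac{|v^\varepsilon_s(\gamma_s)|}{1+|\gamma_s|}\,ds\big)$ along trajectories rules out finite-time blow-up and escape of mass to infinity.

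Next I would establish tightness of $\{\mathbb{P}^\varepsilon\}$ in $\mathcal{P}(\Rd \times C([0,T];\Rd))$. Tightness of the $y$-marginals $\mu_0^\varepsilon$ is immediate; the weighted bound above, via the Gronwall estimate and a truncation/Markov argument, controls $\sup_t|\gamma_t|$ in $\mathbb{P}^\varepsilon$-probability uniformly in $\varepsilon$, and the same bound applied to increments yields the equicontinuity needed for relative compactness in $C([0,T];\Rd)$ (as in \cite[Ch.~8]{ambrosio2014continuity} and Ambrosio--Gigli--Savar\'e). By Prokhorov's theorem, along a subsequence $\mathbb{P}^\varepsilon \rightharpoonup \mathbb{P}$ narrowly.

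It remains to identify $\mathbb{P}$. Writing $e_t(y,\gamma):=\gamma_t$, narrow convergence $\mathbb{P}^\varepsilon \rightharpoonup \mathbb{P}$ together with $\mu_t^\varepsilon \to \mu_t$ narrowly yields $(e_t)_\#\mathbb{P} = \mu_t$ for every $t$, which is the marginal claim $\mathbb{P}(\gamma_t \in dx) = d\mu_t(x)$. To show $\mathbb{P}$ is concentrated on solutions of $\dot\gamma = v(t,\gamma)$, $\gamma(0)=y$, I would test against the bounded continuous functional
\[ \Phi_w(y,\gamma) := \int_0^T \Big( \big| \gamma_t - y - \textstyle\int_0^t w(s,\gamma_s)\,ds \big| \wedge 1 \Big)\,dt \]
for $w$ a continuous, compactly supported vector field. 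Since each $\varepsilon$-trajectory solves the $\varepsilon$-ODE exactly, $\int \Phi_w\,d\mathbb{P}^\varepsilon \le \int_0^T\!\!\int |v^\varepsilon_s - w|\,d\mu^\varepsilon_s\,ds$, and (using the mollification estimate again) the right-hand side converges to $\int_0^T\!\!\int |v_s - w|\,d\mu_s\,ds$; passing $\varepsilon \to 0$ on the left by narrow convergence gives $\int \Phi_w\,d\mathbb{P} \le \int_0^T\!\!\int |v_s - w|\,d\mu_s\,ds$. Minimizing the right-hand side over $w$ in a countable family dense in $L^1(d\mu_s\,ds;\Rd)$ forces $\int \Phi_w\,d\mathbb{P}\to 0$, hence for $\mathbb{P}$-a.e.\ $(y,\gamma)$ one has $\gamma_t = y + \int_0^t v(s,\gamma_s)\,ds$ for a.e.\ $t$, and by continuity of $\gamma$ for all $t$; differentiating gives $\dot\gamma(t) = v(t,\gamma(t))$ a.e.\ with $\gamma(0)=y$, as desired.

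The main obstacle is this last step: because $v$ is only integrable (not continuous) with respect to $d\mu_t\,dt$, one cannot evaluate $v$ along the limiting trajectories directly, so the limit in the \emph{nonlinear} ODE constraint must be taken indirectly through the dense family of continuous approximations $w$ — and making this work hinges on the quantitative convergence $\int_0^T\!\!\int |v^\varepsilon_s - w|\,d\mu^\varepsilon_s\,ds \to \int_0^T\!\!\int |v_s - w|\,d\mu_s\,ds$, which in turn relies on the convolution bound for $E^\varepsilon_t/\mu^\varepsilon_t$. A secondary difficulty is obtaining tightness in $C([0,T];\Rd)$ without a uniform Lipschitz or boundedness bound on $v^\varepsilon$; this is precisely where the $\tfrac{1}{1+|x|}$-weighted integrability hypothesis of the theorem enters.
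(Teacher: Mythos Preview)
The paper does not prove this theorem; it is quoted from \cite{ambrosio2014continuity} and used as a black box in the proofs of Theorems~\ref{thm:detrel}, \ref{thm:detcontaffalg2}, and \ref{thm:revglor2}. Your sketch is the standard mollification--tightness--limit argument for the superposition principle (essentially the proof in Ambrosio--Gigli--Savar\'e and the cited survey), so it is appropriate, but there is no in-paper proof to compare it against.
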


Given this result we can construct a deterministic realization to the reverse process, as show in the following theorem. A key factor that helps us to verify the integrability requirement of the vector field is the regularity of the control law, which is due to the regularity of the solution of the inverses of Poisson equation as established in Proposition \ref{prop:invA}.

\begin{theorem}\textbf{(Determinsitic realization of reverse process)}
Given Assumption \ref{asmp1}. 
Suppose $p_0  \in L^2(\Omega)$ is a probability density. Let  $\pi_i $ be the control law given by 
\[\pi_i(t,\cdot) = \frac{Y_i \mathcal{A}^{-1} \dot{p}^c }{p^c_{t}}\]
for all $t \in (T,0]$
such that a weak solution $p^{\textup{c}}$ of the {\color{black} \eqref{eq:fwdpdf_cs}} satisfies 
\[p^{\textup{c}}_t = p^{\textup{f}}_{T-t} \text{ for all } t\in [0,T]\]
as shown by Lemma \ref{lem:extra}.
Then for every $\epsilon$ there exists a probability measure $\mathbb{P} \in \mathcal{P}(\Rd \times C([0,T-\epsilon];\Rd))$ such that 
\begin{equation}
	\dot \gamma_t = \sum_{i=1}^m  \pi_i(t,\gamma_t) g_i(\gamma_t),~~ \gamma(0) = y
\end{equation}
for $\mathbb{P}$ almost every $(y,\gamma) \in \Rd \times C([0,T];\Rd),$
for all $t \in [0,T]$ and, $\mathbb{P}(\gamma_t \in dx) = p^c_t(x)dx$, for all $t \in [0,T]$.

Moreover, if $p_0 \in \mathcal{D}(\mathcal{B}^1)$, then we can take $\epsilon =0$.
\label{thm:detrel}
\end{theorem}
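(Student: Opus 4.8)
The plan is to invoke the superposition principle of Theorem~\ref{thm:sup} with the velocity field $v(t,x) = \sum_{i=1}^m \pi_i(t,x)\, g_i(x)$, where $\pi_i(t,x) = Y_i (\mathcal{A}^1)^{-1}\dot p^{\mathrm c}_t \,/\, p^{\mathrm c}_t$ is the time-reversing control law furnished by Lemma~\ref{lem:extra}. By that lemma, the curve $t\mapsto p^{\mathrm c}_t = p^{\mathrm f}_{T-t}$ is already a weak solution of the continuity equation~\eqref{eq:wkctty} associated with this $v$. Hence the only hypothesis of Theorem~\ref{thm:sup} that remains to be verified is the integrability condition $\int_0^T\!\!\int_{\R^d} \frac{|v(t,x)|}{1+|x|}\, p^{\mathrm c}_t(x)\,dx\,dt < \infty$. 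Since $\Omega$ is compact (Assumption~\ref{asmp1}), the factor $\frac{1}{1+|x|}$ is bounded, so it suffices to bound $\int_0^T \int_\Omega |v(t,x)|\, p^{\mathrm c}_t(x)\, dx\, dt = \int_0^T \int_\Omega \big|\sum_i g_i(x)\, Y_i(\mathcal{A}^1)^{-1}\dot p^{\mathrm c}_t\big|\, dx\, dt$; note the denominator $p^{\mathrm c}_t$ cancels against the density weight in the integral, which is the key structural simplification.

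The core estimate is therefore a regularity bound on $Y_i(\mathcal{A}^1)^{-1}\dot p^{\mathrm c}_t$. Here I would use two ingredients: (i) the elliptic regularity of the inverse Poisson-type operator $(\mathcal{A}^1)^{-1}$ established in Proposition~\ref{prop:invA} (referenced in the excerpt), which gives $\|(\mathcal{A}^1)^{-1} h\|_{WH^1}$ or a higher Sobolev norm controlled by $\|h\|$ in an appropriate norm, so that $Y_i(\mathcal{A}^1)^{-1}h \in L^2(\Omega)$ (or better) with a quantitative bound; and (ii) the regularity in time of $\dot p^{\mathrm c}_t = -\partial_s p^{\mathrm f}_s\big|_{s=T-t}$, which comes from the semigroup $T(t) = e^{t\mathcal{B}^a}$ generated by $\mathcal{B}^a$. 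For $t$ bounded away from $T$ (i.e.\ $s = T-t$ bounded away from $0$), analyticity of the semigroup gives $\dot p^{\mathrm f}_s = \mathcal{B}^a T(s) p_0$ with $\|\mathcal{B}^a T(s)p_0\| \le \frac{C}{s}\|p_0\|$, uniformly on $[\epsilon, T]$; combined with (i) this yields a bound on $\int_0^{T-\epsilon}\int_\Omega |v(t,x)|\, p^{\mathrm c}_t \, dx\, dt$ that is finite (indeed it involves $\int_\epsilon^T \frac{ds}{s} = \log(T/\epsilon) < \infty$ or a similar integrable singularity). This gives the result on $[0, T-\epsilon]$ for every $\epsilon > 0$.

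For the final claim, when $p_0 \in \mathcal{D}(\mathcal{B}^1)$ we have $\dot p^{\mathrm f}_s = \mathcal{B}^a T(s) p_0 = T(s) \mathcal{B}^a p_0$, which is now uniformly bounded in $\|\cdot\|$ on all of $[0,T]$ (no blow-up as $s \to 0$), so the integrability condition holds on the full interval and we may take $\epsilon = 0$; the curve $t\mapsto p^{\mathrm c}_t$ is continuous with values in $\mathcal{P}(\R^d)$ on $[0,T]$, so Theorem~\ref{thm:sup} applies directly. The main obstacle I anticipate is (ii): controlling $\dot p^{\mathrm f}_t$ near the singular endpoint and ensuring the product of the $\frac{1}{s}$-type singularity with the elliptic gain from $(\mathcal{A}^1)^{-1}$ still integrates — this is exactly why the statement is forced to retreat to $[0,T-\epsilon]$ in the general case, and why regularity of $p_0$ removes the obstruction. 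A secondary technical point is making rigorous that $v(t,x)$ is well-defined $p^{\mathrm c}_t$-a.e.\ despite the division by $p^{\mathrm c}_t$, which relies on strict positivity of $p^{\mathrm f}$ (propagated from Assumption~\ref{asmp3} via the maximum principle / semigroup positivity) so that the quotient is finite where it matters.
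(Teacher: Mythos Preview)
Your proposal is correct and follows essentially the same route as the paper: invoke the superposition principle (Theorem~\ref{thm:sup}), exploit the cancellation of $p^{\mathrm c}_t$ in $\int |\pi_i|\,p^{\mathrm c}_t\,dx$, bound $Y_i(\mathcal{A}^1)^{-1}\dot p^{\mathrm c}_t$ in $L^2(\Omega)$ via Proposition~\ref{prop:invA}, and use semigroup regularity of $\dot p^{\mathrm f}_s$ on $(0,T]$ (with uniform control on all of $[0,T]$ when $p_0\in\mathcal D(\mathcal B^1)$) together with compactness of $\Omega$. The paper's proof is slightly more terse---it simply uses $\dot p^{\mathrm f}\in C((0,T];L^2(\Omega))$ to get a uniform bound on $[\epsilon,T]$ rather than your quantitative $C/s$ analytic-semigroup estimate---but the logic is identical.
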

Using this theorem on realization, we arrive at a corollary that addresses Problem \ref{prob:set_convergence}.

\rev{
\begin{corollary}
\label{cor:conafftargset1}
\textbf{(Target set control)}
Assume the hypothesis of Theorem \ref{thm:detrel}. If $p_0 \in \mathcal{D}(\mathcal{B}^1)$, and has a support contained in $\Omega_{\rm target} \subset \Omega$. Then $\mathbb{P}(\gamma_T \in \Omega_{\rm target}) = 1$. 
\end{corollary}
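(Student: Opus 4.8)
The plan is to leverage the deterministic realization from Theorem \ref{thm:detrel} together with the time-reversal identity $p^{\mathrm{c}}_t = p^{\mathrm{f}}_{T-t}$ and the stability/semigroup structure of the forward Fokker--Planck flow. First I would recall that, by hypothesis, $p_0 \in \mathcal{D}(\mathcal{B}^1)$ so Theorem \ref{thm:detrel} applies with $\epsilon = 0$: there exists a probability measure $\mathbb{P} \in \mathcal{P}(\R^d \times C([0,T];\R^d))$ whose trajectories satisfy the controlled ODE $\dot\gamma_t = \sum_{i=1}^m \pi_i(t,\gamma_t) g_i(\gamma_t)$ and whose marginals are $\mathbb{P}(\gamma_t \in dx) = p^{\mathrm{c}}_t(x)\,dx = p^{\mathrm{f}}_{T-t}(x)\,dx$. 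In particular, at the terminal time $t = T$ we have $\mathbb{P}(\gamma_T \in dx) = p^{\mathrm{c}}_T(x)\,dx = p^{\mathrm{f}}_0(x)\,dx$.

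The key observation is then simply that the terminal marginal of the realization equals $p^{\mathrm{f}}_0 = p_{\rm target} = p_0$ (up to the reindexing conventions used in the forward process of Algorithm 1, where the forward process is initialized at the target density). Since by assumption $p_0$ is supported in $\Omega_{\rm target}$, i.e.\ $p_0(x) = 0$ for a.e.\ $x \notin \Omega_{\rm target}$, we obtain
\begin{equation}
\mathbb{P}(\gamma_T \notin \Omega_{\rm target}) = \int_{\R^d \setminus \Omega_{\rm target}} p^{\mathrm{c}}_T(x)\,dx = \int_{\R^d \setminus \Omega_{\rm target}} p_0(x)\,dx = 0,
\end{equation}
which gives $\mathbb{P}(\gamma_T \in \Omega_{\rm target}) = 1$, as claimed. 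One must be slightly careful that $\Omega_{\rm target}$ (or at least its closure, since trajectories are continuous) is measurable so that the event is well-defined, which holds under the standing assumptions; and that the marginal identity holds at the closed endpoint $t = T$, which is exactly what the $\epsilon = 0$ case of Theorem \ref{thm:detrel} provides (and where the hypothesis $p_0 \in \mathcal{D}(\mathcal{B}^1)$ is used).

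The main obstacle is therefore not any deep estimate but rather bookkeeping at the endpoint: the $\epsilon$-truncation in Theorem \ref{thm:detrel} exists precisely because the time-reversing control law $\pi_i(t,\cdot) = Y_i \mathcal{A}^{-1}\dot p^{\mathrm{c}}_t / p^{\mathrm{c}}_t$ may lose regularity or integrability as $t \to T$ (equivalently as $p^{\mathrm{f}}_s$ approaches the possibly less-regular initial datum at $s \to 0$), so the crux is to invoke the improved-regularity branch of that theorem under $p_0 \in \mathcal{D}(\mathcal{B}^1)$ to legitimately evaluate the realization's marginal at $t = T$. Once that is in hand, the support-transfer argument above is immediate.
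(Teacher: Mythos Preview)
Your proposal is correct and follows exactly the approach the paper intends: the corollary is stated without an explicit proof because it is meant as an immediate consequence of the $\epsilon=0$ case of Theorem~\ref{thm:detrel}, and your argument---evaluating the marginal identity $\mathbb{P}(\gamma_T\in dx)=p^{\mathrm f}_0(x)\,dx=p_0(x)\,dx$ at the terminal time and using the support hypothesis on $p_0$---is precisely that immediate step. Your remarks on why the hypothesis $p_0\in\mathcal{D}(\mathcal{B}^1)$ is needed (to close the endpoint gap) are also on point.
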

}

\subsection{Analysis of Algorithm 2 (Score Matching)}

In this section, we analyze the score matching approach used in Algorithm 2. In this case, we consider two different cases. When the system is driftless and the case of linear time invariant systems. Since, the case of the linear time invariant system comes with some restrictions in terms of the kind of noise distributions that one can converge to in the forward process, we consider these two cases separately.
\subsubsection{Driftless Systems}

In the first result, we derive what are the class of noise distributions that one can drive the forward process to asymptotcally and construct a closed form solution of the feedback control laws. 

\begin{theorem} (\textbf{Convergence of driftless system to noise distribution})
\label{thm:driftnoise}
Suppose the system \eqref{eq:ctrsys} is driftless. That is $X_0 \equiv 0$. Then for the choice of control law  
\begin{equation}
v_i = \sum_{j=1}^n\frac{\partial g_i^j}{\partial  x_j} +\frac{Y_ip_n}{p_n}
\end{equation}
the evolution of the probability density \eqref{eq:kfe} is equivalently described by the equation 
\begin{equation}
\label{eq:transforpde}
\partial_t p =   -\sum_{i=1}^m  Y_i^*Y_i p   +\sum_{i=1}^m  Y_i^* ( p \frac{Y_ip_n}{p_n} ) 
\end{equation}
Consequently, for the zero flux boundary condition \eqref{eq:zbc}, given $p_0 \in L^2(\Omega) \cap \mathcal{P
}(\Omega)$, there exist constants $M>0$ and $\lambda>0$ such that the solution of \eqref{eq:transforpde} satisfies
\begin{equation}
\|p_t - p_n\|_{2} \leq M e^{-\lambda t} \|p_0 - p_n\|_2
\end{equation}
for all $t \geq 0$.
\label{thm:ltmcon}
\end{theorem}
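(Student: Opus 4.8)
\textbf{Proof plan for Theorem~\ref{thm:driftnoise}.}
The statement has two parts: first, an algebraic identity showing that the choice of $v_i$ converts the Kolmogorov forward equation~\eqref{eq:kfe} into the self-adjoint form~\eqref{eq:transforpde}; second, an exponential convergence estimate for solutions of~\eqref{eq:transforpde}. The plan is to handle these separately.

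For the first part, I would start from~\eqref{eq:kfe} with $Y_0 \equiv 0$ (driftless), so that
\begin{equation}
\partial_t p = \sum_{i=1}^m (Y_i^*)^2 p + \sum_{i=1}^m Y_i^*(v_i p).
\end{equation}
The key observation is the operator identity $Y_i^* Y_i = -(Y_i^*)^2 + Y_i^*( (\operatorname{div} g_i)\, \cdot\,)$, which follows by writing $Y_i = \sum_j g_i^j \partial_{x_j}$ and $Y_i^* = -\sum_j \partial_{x_j}(g_i^j \,\cdot\,) = -Y_i - \operatorname{div} g_i$, where $\operatorname{div} g_i = \sum_j \partial g_i^j/\partial x_j$. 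Substituting $(Y_i^*)^2 = -Y_i^* Y_i + Y_i^*((\operatorname{div} g_i)\,\cdot\,)$ into the equation above and collecting the first-order terms, the terms involving $\operatorname{div} g_i$ combine with $v_i$: choosing $v_i = \operatorname{div} g_i + Y_i p_n / p_n$ exactly cancels the $\operatorname{div} g_i$ contribution and replaces it by $Y_i^*(p\, Y_i p_n/p_n)$, yielding~\eqref{eq:transforpde}. This is a direct computation with integration by parts and should be routine.

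For the second part, I would identify the right-hand side of~\eqref{eq:transforpde} with the operator $\mathcal{A}^a$ for the weight $a = 1/p_n$, exactly as set up in the paragraphs preceding the theorem (the operator $\mathcal{A}^a = -\sum_i Y_i^*\!\big(\tfrac{1}{a} Y_i(a\,\cdot)\big)$ becomes $-\sum_i Y_i^* Y_i p + \sum_i Y_i^*(p\, Y_i p_n/p_n)$ when $a = 1/p_n$), together with the zero-flux boundary condition~\eqref{eq:zbc}. Then I would invoke the results of~\cite{elamvazhuthi2023density} quoted in the excerpt: under Assumptions~\ref{asmp2},~\ref{asmp1},~\ref{asmp3}, the operator $\mathcal{A}^a$ generates a strongly continuous semigroup $\T(t)$ on $L^2_a(\Omega)$ with $p_n$ as its equilibrium, and satisfies $\|\T(t)p_0 - p_n\|_a \le M e^{-\omega t}\|p_0 - p_n\|_a$. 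Since $a = 1/p_n$ is bounded above and below by positive constants (using strict positivity and smoothness of $p_n$ on the compact $\bar\Omega$ from Assumption~\ref{asmp3}), the weighted norm $\|\cdot\|_a$ is equivalent to the standard $L^2(\Omega)$ norm, so the estimate transfers to $\|p_t - p_n\|_2 \le M' e^{-\omega t}\|p_0 - p_n\|_2$ with an adjusted constant $M'$, giving the claimed bound with $\lambda = \omega$.

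The main obstacle is not really a hard estimate but rather making sure the hypotheses line up cleanly: the cited spectral-gap/semigroup result from~\cite{elamvazhuthi2023density} requires the Chow--Rashevsky (Hörmander) condition of Assumption~\ref{asmp2}, the non-characteristic $C^2$ boundary of Assumption~\ref{asmp1}, and the smoothness and strict positivity of $p_n$ in Assumption~\ref{asmp3} (the latter is what makes $1/p_n$ a legitimate weight and is what guarantees invertibility/compact-resolvent structure of $\mathcal{A}^a$ on $L^2_{\perp,a}$). I would state these dependencies explicitly. A minor point to check is that the boundary condition~\eqref{eq:zbc} obtained from the Green's-theorem computation for the weighted form $\omega_a$ is precisely the natural (form) boundary condition associated with $\mathcal{A}^{1/p_n}$, so that the abstract ODE $\dot p = \mathcal{A}^a p$ with its generation theory genuinely corresponds to~\eqref{eq:transforpde}--\eqref{eq:zbc}; this was essentially already argued in the text preceding the theorem, so I would just cite it.
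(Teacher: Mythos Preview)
Your proposal is correct and follows essentially the same route as the paper: the algebraic reduction via the identity $Y_i^* = -Y_i - \operatorname{div} g_i$ to rewrite $(Y_i^*)^2$ in terms of $Y_i^*Y_i$, followed by identifying the resulting PDE with the abstract evolution $\dot p = \mathcal{A}^{1/p_n} p$ and invoking the spectral-gap/semigroup estimate from \cite{elamvazhuthi2023density} (with the norm equivalence between $\|\cdot\|_a$ and $\|\cdot\|_2$). One small slip: your stated identity should read $Y_i^*Y_i = -(Y_i^*)^2 - Y_i^*\big((\operatorname{div} g_i)\,\cdot\,\big)$ (minus, not plus), which is exactly what makes the choice $v_i = \operatorname{div} g_i + Y_ip_n/p_n$ cancel the divergence term; with that correction your computation goes through verbatim and matches the paper's.
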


In the following theorem we establish that solutions of the nonholonomic Fokker-equation remain positive for all time $t>0$. The semigroup $\mathcal{T}^{\mathcal{A}^a}(t)$ generated by the operator $\mathcal{A}^a$ is {\it irreducible}. That is, if $p_0 \geq 0$, then $\mathcal{T}^{\mathcal{A}^a}(t)p_0>0$ for every $t>0$. Since the expression involves the solution $p^f_t$ in the denominator, this ensures that the time reversing control law computed further ahead is well posed.

\begin{proposition}
\label{prop:irred}
The semigroup $\mathcal{T}^{\mathcal{A}^a}(t)$ generated by the operator $\mathcal{A}^a$ is {\it irreducible}. That is, if $p_0 \geq 0$, then $\mathcal{T}^{\mathcal{A}^a}(t)p_0>0$ for every $t>0$.
\end{proposition}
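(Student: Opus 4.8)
The plan is to invoke the general theory of positive semigroups generated by sesquilinear forms satisfying a Beurling--Deny type criterion, and to upgrade positivity to strict positivity (irreducibility) using the hypoellipticity afforded by the Chow--Rashevsky condition in Assumption~\ref{asmp2}. First I would recall that the operator $\mathcal{A}^a = -\sum_{i=1}^m Y_i^*\!\left(\tfrac{1}{a}Y_i(a\,\cdot)\right)$ is associated with the closed, densely defined, accretive form $\omega_a$ on the Hilbert space $L^2_a(\Omega)$ with form domain $WH^1_a(\Omega)$, as established earlier in the excerpt. Since $\omega_a$ is a symmetric Dirichlet-type form built from the first-order operators $Y_i$ (after the multiplicative conjugation by $a$), the semigroup $\mathcal{T}^{\mathcal{A}^a}(t)$ is positivity preserving: $p_0 \geq 0 \Rightarrow \mathcal{T}^{\mathcal{A}^a}(t)p_0 \geq 0$. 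This follows from the standard characterization (e.g. Ouhabaz, \emph{Analysis of Heat Equations on Domains}) that $WH^1_a(\Omega)$ is stable under the truncation $f \mapsto f^+$ and $\omega_a(f^+, f^-) \leq 0$, both of which are inherited from the analogous properties of the classical Dirichlet form since the $Y_i$ act as derivations.

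Next, to obtain irreducibility I would use the criterion that a positive $C_0$-semigroup on $L^2_a(\Omega)$ is irreducible if and only if there is no nontrivial closed ideal (i.e. no measurable set $E$ with $0 < |E| < |\Omega|$) that is invariant under the semigroup; equivalently, the form has no nontrivial invariant closed subspace of the form $L^2_a(E)$. Invariance of $L^2_a(E)$ under the form would force $\mathbf{1}_E f \in WH^1_a(\Omega)$ for all $f$ in the form domain and $Y_i(\mathbf{1}_E \cdot a f) = \mathbf{1}_E Y_i(af)$ in the distributional sense for each $i$; that is, $\mathbf{1}_E$ would be annihilated by every $Y_i$ in the sense of distributions. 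The key step is then: because the family $\{Y_1,\dots,Y_m\}$ satisfies the bracket-generating (Hörmander) condition of Assumption~\ref{asmp2}, any distribution annihilated by all $Y_i$ is annihilated by every vector field in the Lie algebra they generate, hence by a basis of $\mathbb{R}^d$; therefore $\mathbf{1}_E$ is locally constant, and since $\Omega$ is connected (Assumption~\ref{asmp1}), $\mathbf{1}_E$ is a.e.\ constant, forcing $|E| \in \{0, |\Omega|\}$. This contradicts the existence of a nontrivial invariant ideal, so the semigroup is irreducible, and therefore $\mathcal{T}^{\mathcal{A}^a}(t)p_0 > 0$ a.e.\ for every $t > 0$ whenever $p_0 \geq 0$ and $p_0 \not\equiv 0$.

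Alternatively, and perhaps more cleanly, one can derive strict positivity directly from parabolic regularity: the operator $\partial_t - \mathcal{A}^a$ is hypoelliptic on $(0,T] \times \Omega$ by Hörmander's theorem (the $Y_i$ and their brackets span), so $\mathcal{T}^{\mathcal{A}^a}(t)p_0$ is smooth for $t>0$, and a Bony-type strong maximum principle for degenerate-but-hypoelliptic parabolic operators (propagation of positivity along the drift-free diffusion of the $Y_i$, combined with bracket-generation to reach all of $\Omega$) yields that a nonnegative, nontrivial solution is strictly positive throughout $\Omega$ for every $t>0$; the non-characteristic boundary assumption (Definition~\ref{def:epsdel}) ensures positivity persists up to $\partial\Omega$. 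I would cite the companion reference \cite{elamvazhuthi2023density}, where the semigroup generated by $\mathcal{A}^a$ and its smoothing properties are developed, to shorten this argument.

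The main obstacle I expect is the degeneracy: unlike the uniformly elliptic Fokker--Planck case, $\omega_a$ controls only the horizontal derivatives $Y_i(af)$, so neither positivity preservation nor irreducibility is completely automatic from textbook elliptic theory. The positivity-preservation part is routine (the form is still a Dirichlet form), but the step that "$\mathbf{1}_E$ annihilated by all $Y_i$ implies $\mathbf{1}_E$ constant" genuinely uses the sub-Riemannian structure and is where the Chow--Rashevsky hypothesis does the real work; making the distributional bracket argument rigorous (or, equivalently, invoking the hypoelliptic strong maximum principle with the correct boundary behavior) is the crux. The boundary condition \eqref{eq:zbc} and the non-characteristic assumption must be handled carefully so that the invariant-ideal argument is not disrupted by boundary effects, but these are exactly the hypotheses under which \cite{elamvazhuthi2023density} operates, so I would lean on that.
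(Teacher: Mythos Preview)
Your proposal is correct and follows essentially the same route as the paper: both invoke Ouhabaz's invariant-ideal criterion for irreducibility, reduce to showing that $Y_i \mathbf{1}_E = 0$ for all $i$, and then conclude that $\mathbf{1}_E$ must be constant on $\Omega$. The only cosmetic difference is in the endgame---the paper observes that $\omega_{\mathbf{1}}(\mathbf{1}_E,\mathbf{1}_E)=0$ forces $\mathcal{A}^{\mathbf{1}}\mathbf{1}_E=0$ and then cites the simplicity of the zero eigenvalue from \cite{elamvazhuthi2023density}, whereas you unpack this step directly via the Lie-bracket/connectedness argument; your alternative hypoelliptic strong-maximum-principle route is not used in the paper.
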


Given this result of irreducibility, we can construct a deterministic control law that realizes the forward process. By reversibility of the continuity equation, this immediately also implies the control law can be used to reverse the forward process.

\begin{proposition}
\label{prop:realdrift}
Suppose the system is a controllable driftless system on a compact domain. Let $p^f_t$ be the mild solution of \eqref{eq:Mainsysan}. Consider the control law
\[\pi_i(t,\cdot) = \frac{Y_ip_n}{p_n}- \frac{Y_i p^f_{t}}{ p^f_{t}} \]
Then $p^f_t$ also solves the continuity equation in the weak sense as in definition \eqref{eq:wkctty}. 
\end{proposition}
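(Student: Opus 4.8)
The plan is to verify the claim by direct substitution, once the mild solution is known to be regular and positive: with the stated feedback, the nonholonomic Fokker--Planck equation \eqref{eq:Mainsysan} turns into the continuity equation \eqref{eq:fwdpdf_cs} essentially by the quotient rule for the vector fields $Y_i$. I would first collect the regularity and positivity facts. The operator $\mathcal{A}^a$ (here $a=1/p_n$) generates the NFE semigroup, whose smoothing and stability properties are recorded in \cite{elamvazhuthi2023density}; hence for every $t>0$ the mild solution satisfies $p^f_t\in\mathcal{D}(\mathcal{A}^a)\subset WH^1_a(\Omega)$, the map $t\mapsto p^f_t$ is $C^1$ from $(0,T]$ into $L^2(\Omega)$ with $\dot p^f_t=\mathcal{A}^a p^f_t$, and $p^f_t$ obeys the zero-flux boundary condition \eqref{eq:zbc}. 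Since $p_n\in C^\infty(\bar\Omega)$ is bounded away from $0$ (Assumption~\ref{asmp3}), $WH^1_a(\Omega)$ and $WH^1(\Omega)$ coincide, so $Y_i p^f_t\in L^2(\Omega)$ for every $i$ and $t>0$, while $\tfrac{Y_i p_n}{p_n}$ is smooth and bounded. Taking $p_0$ a nonzero probability density, irreducibility of the NFE semigroup (Proposition~\ref{prop:irred}) gives $p^f_t>0$ on $\Omega$ for all $t>0$, so the feedback $\pi_i(t,\cdot)=\tfrac{Y_i p_n}{p_n}-\tfrac{Y_i p^f_t}{p^f_t}$ is a well-defined measurable function on $(0,T]\times\Omega$.

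The core of the argument is the identity, valid for $t>0$ by the quotient rule,
\[
\pi_i(t,\cdot)\,p^f_t \;=\; \frac{Y_i p_n}{p_n}\,p^f_t-Y_i p^f_t \;=\; -\,p_n\,Y_i\!\Big(\frac{p^f_t}{p_n}\Big)\;\in\;L^2(\Omega),
\]
so that, using $\mathcal{A}^a=-\sum_{i=1}^m Y_i^*\big(\tfrac1a Y_i(a\,\cdot)\big)$ with $a=1/p_n$ (equivalently the right-hand side of \eqref{eq:Mainsysan}),
\[
\dot p^f_t \;=\; \mathcal{A}^a p^f_t \;=\; -\sum_{i=1}^m Y_i^*\!\Big(p_n\,Y_i\big(\tfrac{p^f_t}{p_n}\big)\Big) \;=\; \sum_{i=1}^m Y_i^*\!\big(\pi_i(t,\cdot)\,p^f_t\big) \;=\; -\,\mathrm{div}\!\Big(\sum_{i=1}^m g_i\,\pi_i(t,\cdot)\,p^f_t\Big),
\]
where the last step uses $Y_i^*\psi=-\mathrm{div}(g_i\psi)$ and that the system is driftless, so $g(x,\pi(t,x))=\sum_i g_i(x)\pi_i(t,x)$. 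Thus $\dot p^f_t+\mathrm{div}\big(g(x,\pi(t,\cdot))\,p^f_t\big)=0$ as an identity in $L^2(\Omega)$ for each $t>0$, i.e.\ the strong form of \eqref{eq:fwdpdf_cs} with $u=\pi$.

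Finally I would pass to the weak formulation \eqref{eq:wkctty}. For $\phi\in C^\infty_c(\Rd\times(0,T))$, multiply the last identity by $\phi$ and integrate over $\Omega\times(0,T)$ (with $p^f$ extended by $0$ outside $\Omega$). The time term $\int_0^T\!\int_\Omega\partial_t\phi\,p^f_t\,dx\,dt$ is handled by integration by parts in time, which is legitimate because $t\mapsto p^f_t$ is $C^1$ into $L^2$ on the compact, interior time-support of $\phi$; no endpoint terms appear and the possible singularity of $\pi$ at $t=0$ is never seen. The transport term is handled by writing $g_i\cdot\nabla\phi=Y_i\phi$ and integrating by parts in space via $\int_\Omega(Y_i\phi)\psi\,dx=\int_\Omega\phi\,Y_i^*\psi\,dx+\int_{\partial\Omega}(\vec n\cdot g_i)\,\phi\,\psi\,dS$; after summing over $i$ and using the identity above, the only boundary contribution is $-\int_{\partial\Omega}\phi\,p_n\sum_i\vec n\cdot\big(g_i Y_i(p^f_t/p_n)\big)\,dS$, which vanishes by the zero-flux condition \eqref{eq:zbc} (equivalently, because $p^f_t\in\mathcal{D}(\mathcal{A}^a)$). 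The remaining interior terms cancel by the previous step, which yields \eqref{eq:wkctty}. I expect the main obstacle to be the first step: upgrading the mild solution to a strong solution with $WH^1$ regularity and strict positivity for $t>0$, since that is what makes the feedback, the products $\pi_i p^f_t$, and the integrations by parts meaningful — the analyticity of the NFE semigroup and the irreducibility result do the real work there, while the remaining steps are substitution and standard integration by parts.
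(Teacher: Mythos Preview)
Your proposal is correct and follows essentially the same route as the paper: invoke irreducibility (Proposition~\ref{prop:irred}) for well-posedness of the feedback, identify $\pi_i p^f_t$ with $-p_n Y_i(p^f_t/p_n)$, recognize this as the flux in $\mathcal{A}^a p^f_t$, and integrate by parts in time using compact support of the test function. The paper's proof works directly at the level of the weak formulation and the bilinear form $\omega_a$, whereas you first record the strong $L^2$ identity $\dot p^f_t=-\mathrm{div}(g(x,\pi)p^f_t)$ and then test; you are also more explicit about the boundary contribution and its cancellation via the zero-flux condition \eqref{eq:zbc}, a point the paper's proof leaves implicit.
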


Next, we consider Problem \ref{prb3}. Using the regularity of solutions of the Fokker-Plack equation established in Lemma \ref{lem:vecest1}, we can state the following result proving Problem \ref{prb3}.

\begin{theorem}
\label{thm:detcontaffalg2}
\textbf{Existence of deterministic reverse process}
Let $p_0 \in L^2(\Omega)$ be a probability density. Suppose the system is driftless and confined to a compact regular domain and satisfies Assumption \ref{asmp1}-\ref{asmp3}).
Then there exists a probability measure $\mathbb{P} \in \mathcal{P}(\Rd \times C([0,T];\Rd))$ such that 
\begin{equation}
\dot \gamma_t = \sum_{i=1}^m  \pi_i(t,\gamma_t) g_i(\gamma_t),~~ \gamma(0) = y
\end{equation}
for $\mathbb{P}$ almost every $(y,\gamma) \in \Rd \times C([0,T];\Rd)$,
where 
\[ \pi_i(t,x) =  -\frac{Y_ip_n}{p_n}+\frac{Y_i p^f_{T-t}}{ p^f_{T-t}} \]
for all $t \in [0,T]$.
\end{theorem}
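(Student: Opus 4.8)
The strategy mirrors the proof of Theorem~\ref{thm:detrel}, with the holonomic Fokker--Planck operator $\mathcal{B}^a$ replaced by the nonholonomic operator $\mathcal{A}^a$ associated with \eqref{eq:transforpde}. Three things must be checked: that the time-reversed density $p^{\mathrm c}_t := p^{\mathrm f}_{T-t}$ is a weak solution of the continuity equation \eqref{eq:fwdpdf_cs} for the stated feedback; that this feedback is a well-defined Borel field; and that the induced velocity satisfies the integrability hypothesis of the superposition principle, Theorem~\ref{thm:sup}.

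For the first point, Theorem~\ref{thm:driftnoise} lets us take $p^{\mathrm f}$ to be the mild solution of \eqref{eq:transforpde} (which coincides with \eqref{eq:Mainsysan}), i.e.\ $p^{\mathrm f}_t = \mathcal{T}^{\mathcal{A}^a}(t)p_0$ with $a = 1/p_n$. Proposition~\ref{prop:realdrift} then gives that $p^{\mathrm f}_t$ also solves the continuity equation weakly for the control $\pi_i(t,\cdot) = \frac{Y_i p_n}{p_n} - \frac{Y_i p^{\mathrm f}_t}{p^{\mathrm f}_t}$. Since the weak continuity equation \eqref{eq:wkctty} is invariant under $t \mapsto T-t$ jointly with $v \mapsto -v$, the profile $p^{\mathrm c}_t := p^{\mathrm f}_{T-t}$ is a weak solution of \eqref{eq:fwdpdf_cs} for $\pi_i(t,x) = -\frac{Y_i p_n}{p_n} + \frac{Y_i p^{\mathrm f}_{T-t}}{p^{\mathrm f}_{T-t}}$, which is exactly the feedback in the statement.

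For the remaining two points, Proposition~\ref{prop:irred} shows the semigroup $\mathcal{T}^{\mathcal{A}^a}$ is irreducible, so $p^{\mathrm f}_s > 0$ on $\Omega$ for every $s > 0$; hence $p^{\mathrm c}_t = p^{\mathrm f}_{T-t} > 0$ on $\Omega$ for $t \in [0,T)$ and the quotient $Y_i p^{\mathrm f}_{T-t}/p^{\mathrm f}_{T-t}$ is a genuine Borel function, while $Y_i p_n/p_n$ is smooth and bounded on the compact set $\bar\Omega$ by Assumption~\ref{asmp3}. Writing $v(t,x) = \sum_i \pi_i(t,x)\,g_i(x)$ and using the identity $p^{\mathrm f}_{T-t}\,Y_i(p^{\mathrm f}_{T-t}/p_n) = Y_i p^{\mathrm f}_{T-t} - (p^{\mathrm f}_{T-t}/p_n)\,Y_i p_n$, the momentum factors as $p^{\mathrm c}_t\, v = \sum_i p_n\, Y_i(p^{\mathrm f}_{T-t}/p_n)\,g_i$. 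Because $\Omega$ is compact (Assumption~\ref{asmp1}), $1/(1+|x|)$ and the $g_i$ are bounded there, so the hypothesis of Theorem~\ref{thm:sup} reduces to $\int_0^T \|Y_i p^{\mathrm f}_s\|_{L^1(\Omega)}\,ds < \infty$ for each $i$ (the $Y_i p_n/p_n$ contribution is trivially integrable since $\int_\Omega p^{\mathrm c}_t = 1$). This $L^1$-in-space, $L^1$-in-time bound on the first-order nonholonomic derivatives of $p^{\mathrm f}$ is precisely what the parabolic regularity estimate Lemma~\ref{lem:vecest1} supplies: a smoothing bound of the type $\|Y_i p^{\mathrm f}_s\|_{L^2(\Omega)} \lesssim s^{-1/2}\,\|p_0\|_{L^2(\Omega)}$, whose right-hand side is integrable on $[0,T]$, so that, unlike in Theorem~\ref{thm:detrel}, no truncation $\epsilon>0$ is needed here.

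With these two facts, Theorem~\ref{thm:sup} produces a probability measure $\mathbb{P}$ on $\Rd \times C([0,T];\Rd)$ concentrated on absolutely continuous curves $\gamma$ solving $\dot\gamma_t = v(t,\gamma_t)$, $\gamma_0 = y$, with marginals $\mathbb{P}(\gamma_t \in dx) = p^{\mathrm c}_t(x)\,dx$; that the curves remain in $\Omega$ follows because $\vec n \cdot (p^{\mathrm c}_t v) = p_n \sum_i \vec n \cdot (g_i\,Y_i(p^{\mathrm f}_{T-t}/p_n)) = 0$ on $\partial\Omega$ by the zero-flux condition \eqref{eq:zbc}, which forces $v$ to be tangent to the boundary. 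The main obstacle is the third step: obtaining a time-integrable control on $\|Y_i p^{\mathrm f}_s\|$ all the way down to $s = 0$ when $p_0$ is merely in $L^2(\Omega)$; this rests on the hypoellipticity granted by Assumption~\ref{asmp2} and the analytic-semigroup smoothing behind Lemma~\ref{lem:vecest1}. Everything else is a direct transcription of the argument already used for Theorem~\ref{thm:detrel}.
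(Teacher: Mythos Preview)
Your proposal is correct and follows exactly the paper's route: verify the integrability hypothesis of Theorem~\ref{thm:sup} for the velocity $v(t,x)=\sum_i \pi_i(t,x)g_i(x)$ via Lemma~\ref{lem:vecest1}, then invoke the superposition principle (the paper's proof is literally one sentence to this effect, so your write-up is considerably more detailed). One small correction: Lemma~\ref{lem:vecest1} does not proceed via a pointwise analytic-smoothing bound $\|Y_i p^{\mathrm f}_s\|_{L^2}\lesssim s^{-1/2}\|p_0\|_{L^2}$ as you speculate, but via the energy identity $\tfrac{d}{dt}\|p^{\mathrm f}_t\|_a^2=-\omega_a(p^{\mathrm f}_t,p^{\mathrm f}_t)$, which after integrating in $t$ gives $\int_0^T\|Y_i p^{\mathrm f}_t\|_{L^2}^2\,dt<\infty$ directly and hence the $L^1_tL^1_x$ bound by compactness of $\Omega$.
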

\begin{proof}
Given the assumptions, the vector field 
\[v(t,x) =  \sum_{i=1}^m  \pi_i(t,x) g_i(x) \] 
satisfies the assumption of Theorem \ref{thm:sup} by results of Lemma \ref{lem:vecest1}.
\end{proof}

\rev{
\begin{corollary}
\textbf{(Target Set Control)}
Assume the hypothesis of Theorem \ref{thm:detcontaffalg2}. If $p_0 \in L^2(\Omega)$, and has a support contained in $\Omega_{\rm target} \subset \Omega$. Then $\mathbb{P}(\gamma_T \in \Omega_{\rm target}) = 1$. 
\label{cor:conafftargset2}
\end{corollary}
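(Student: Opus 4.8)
The plan is to deduce the target-set guarantee directly from the deterministic realization established in Theorem~\ref{thm:detcontaffalg2}, together with the time-reversed density tracking property. First I would recall that by construction the controlled density $p^{\mathrm c}_t$ reproduces the time-reversed forward density, so that $p^{\mathrm c}_T = p^{\mathrm f}_0 = p_0$ (here $p^{\mathrm f}$ is the mild solution of \eqref{eq:Mainsysan} started at $p_0$, which by the tracking argument of Proposition~\ref{prop:realdrift} / Lemma~\ref{lem:extra} is tracked in reverse by the feedback $\pi_i(t,x) = -Y_ip_n/p_n + Y_ip^{\mathrm f}_{T-t}/p^{\mathrm f}_{T-t}$). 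Since $\Omega_{\rm target}\subset\Omega$ contains the support of $p_0$, we get $p^{\mathrm c}_T(x) = 0$ for a.e.\ $x\notin\Omega_{\rm target}$.

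Next I would invoke the marginal identity from Theorem~\ref{thm:detcontaffalg2}: the probability measure $\mathbb P$ on $\mathbb R^d\times C([0,T];\mathbb R^d)$ satisfies $\mathbb P(\gamma_T\in dx) = p^{\mathrm c}_T(x)\,dx$. Integrating over the complement of the target set,
\begin{equation}
\mathbb P(\gamma_T\notin\Omega_{\rm target}) = \int_{\mathbb R^d\setminus\Omega_{\rm target}} p^{\mathrm c}_T(x)\,dx = \int_{\mathbb R^d\setminus\Omega_{\rm target}} p_0(x)\,dx = 0,
\end{equation}
since $\mathrm{supp}(p_0)\subseteq\Omega_{\rm target}$. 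Hence $\mathbb P(\gamma_T\in\Omega_{\rm target}) = 1$, which is exactly the claim; each trajectory $\gamma$ is an admissible closed-loop trajectory of \eqref{eq:ctrsys} under $\pi$, so this says the closed-loop system reaches $\Omega_{\rm target}$ at time $T$ with probability one.

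The only subtlety — and the step I would be most careful about — is making sure the realization theorem genuinely applies here and that no extra regularity on $p_0$ is needed beyond $p_0\in L^2(\Omega)\cap\mathcal P(\Omega)$. This is where irreducibility (Proposition~\ref{prop:irred}) and the vector-field estimate (Lemma~\ref{lem:vecest1}) do the work: irreducibility guarantees $p^{\mathrm f}_t>0$ for $t>0$ so the feedback is well defined on $(0,T]$, and Lemma~\ref{lem:vecest1} gives the integrability bound $\int_0^T\!\int_{\mathbb R^d} \frac{|v(t,x)|}{1+|x|}\,d\mu_t\,dt<\infty$ needed to apply the superposition principle (Theorem~\ref{thm:sup}). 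On the compact non-characteristic domain $\Omega$ the trajectories stay confined, so the support inclusion at $t=T$ is inherited from the density identity with no boundary loss of mass. With these ingredients in place the corollary follows, and indeed one may even invoke Corollary~\ref{cor:conafftargset1} verbatim with $\mathcal B^1$ replaced by the nonholonomic operator, since the driftless analysis of Algorithm~2 provides the same tracking-plus-realization structure.
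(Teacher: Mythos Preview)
Your proof is correct and follows exactly the route the paper intends: the corollary is stated in the paper without an explicit proof, as an immediate consequence of Theorem~\ref{thm:detcontaffalg2}, and you have supplied precisely the natural argument---the marginal identity $\mathbb{P}(\gamma_T\in dx)=p^{\mathrm f}_{0}(x)\,dx=p_0(x)\,dx$ inherited from the superposition principle, combined with the support hypothesis on $p_0$. Your remarks on why no $\epsilon$-truncation is needed here (unlike in Theorem~\ref{thm:detrel}) are also on point: Lemma~\ref{lem:vecest1} gives the $L^1$ bound on $Y_ip^{\mathrm f}_t$ over the full interval $[0,T]$, so the integrability hypothesis of Theorem~\ref{thm:sup} holds without restricting to $[0,T-\epsilon]$.
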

}

\subsubsection{Linear Time Invariant Systems}

Next, we consider the case of systems that are not driftless. While it is challenging for general systems with drift to design the equilibrium distribution, for stable linear time invariant systems, we can choose the noise distribution to be Gaussians. Toward this end we state the following assumption.

\begin{assumption}\textbf{(Controllable Stable Linear Time Invariant System (LTI))} The domain $\Omega = \mathbb{R}^d$ is the whole Euclidean space. There exist matrices $A \in \mathbb{R}^{d \times d}$ and $B \in \R^{d \times m}$ such that $g_0(x) = Ax$ and $[g_1(x),...,g_m(x) ] = Bx$ for all $x \in \Rd$. Additionally, ${\rm spec} ~A \subseteq \mathbb{C}^{-}$ where ${\rm spec} ~A$ denotes the spectrum of $A$ and $\mathbb{C}^{-}$ denotes the open left half of the complex plane. Additionally, the controllability Gramian,
\begin{equation}
Q(t) = \int_0^t e^{A\tau }BB^Te^{A^T \tau}d\tau
\end{equation}
is invertible for some (and hence all) $t>0$.
\label{asmp:lti}
\end{assumption}

Due to the presence of drift, several complexities are introduced in the problem. Firstly, one cannot in general make the forward process to converge to arbitrary distribution. For this reason, we consider the simpler case when $v_i(x)$ are set to $0$. In this case, for the solution of the PDE \eqref{eq:kfe}, rather than using semigroup theoretic arguments as in the driftless case, one can instead represent the solution using a integral operation of a kernel function given by,
\begin{equation}
\label{eq:kernel}
K(t,x,y) =  \frac{1}{(2 \pi)^{d/2} {\rm det}(Q_{t})^{1/2} }e^{-\frac{1}{2}(y-e^{At}x)^T Q^{-1}_{t} (y-e^{At}x)} 
\end{equation}
One can check by substitution that $K(t,x,y)$ is a classical solution of \eqref{eq:kfe} in $C^2((0,T];\Rd)$ for $p^f_0 = \delta_x$. For more general initial conditions $p^f_t$ is given by
\[p^f_t(y) = \int_{\Rd} K(t,x,y)p^f_0(x)dx \]
for all $t >0$. 
We will also need the kernel function 

Using this, we can see that
\[p^f_t(y) = \int_{\Rd} \tilde{K}(t,x,y) d(e^{At}_{\#}p^f_0)(x) \]
With an abuse of notation we denote
\[\tilde{K}(t,x) := \frac{1}{(2 \pi)^{d/2} {\rm det}(Q_{t})^{1/2} }e^{-\frac{1}{2}x^T Q^{-1}_{t} x}, \]
where $*$ denotes the convolution operation and hence,
\[p^f_t = \tilde{K}(t, \cdot)*\tilde{p}^f_t \]
where $\tilde{p}^f_t = e^{At}_{\#}p^f_0(x)$.

Unlike the case of driftless systems, it is not clear if the forward process can be stabilized to any given target noise distribution. However, the following result establishes a convergence for a specific gaussian distribution associated with the controllability Gramian of the system. While this result is well known, due to the lack of a specific reference, we state it here and prove it for convergence to the stationary distribution in $L_2$ norm, 

\begin{theorem}
(\textbf{Convergence of stable LTI system to stationary distribution})
\rev{Suppose the system satisfies Assumption \ref{asmp:lti}.}  Let $p_0 \in L^2(\Omega) \cap \mathcal{P}_2(\Omega)$ and $v_i \equiv 0$ in \eqref{eq:nhmsde}. Then the solution $p^f_t$ of
\eqref{eq:kfe} satisfies

\[ \lim_{t \rightarrow \infty} \|p^f_t - p_{\infty}\|_2 = 0\]
where 
\[p_{\infty}(x) = \frac{1}{(2 \pi)^{d/2} {\rm det}(Q_{\infty})^{1/2} }e^{-\frac{1}{2}x^T Q^{-1}_{\infty} x} \]
\label{thm:linstab}
\end{theorem}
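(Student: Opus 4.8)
The plan is to work directly from the explicit Gaussian representation recorded just before the statement, namely $p^f_t=\tilde{K}(t,\cdot)*\tilde{p}^f_t$ with $\tilde{p}^f_t=(e^{At})_{\#}p_0$ and $\tilde{K}(t,\cdot)$ the density of $\mathcal{N}(0,Q_t)$, and to compare it with $p_\infty$, the density of $\mathcal{N}(0,Q_\infty)$. First I would record the needed properties of $\{Q_t\}_{t>0}$. The integrand $e^{A\tau}BB^Te^{A^T\tau}$ is positive semidefinite, so $t\mapsto Q_t$ is nondecreasing; since $\mathrm{spec}\,A\subset\mathbb{C}^-$ gives $\|e^{A\tau}\|\le Me^{-\omega\tau}$ for some $M,\omega>0$, the limit $Q_\infty=\int_0^\infty e^{A\tau}BB^Te^{A^T\tau}\,\de\tau$ is finite; and since Assumption~\ref{asmp:lti} gives $Q_{t_0}\succ0$ for one (hence every) $t_0>0$, monotonicity yields $Q_\infty\succeq Q_{t_0}\succ0$, so $Q_\infty$ is invertible and $p_\infty$ is a well-defined Gaussian density. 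Two consequences follow. (i) The Gaussian family converges in $L^2$: expanding $\|\tilde{K}(t,\cdot)-p_\infty\|_2^2=\|\tilde{K}(t,\cdot)\|_2^2-2\langle\tilde{K}(t,\cdot),p_\infty\rangle+\|p_\infty\|_2^2$, each term is an explicit continuous function of $Q_t$ (products and integrals of Gaussians), so the right-hand side converges to $\|p_\infty\|_2^2-2\|p_\infty\|_2^2+\|p_\infty\|_2^2=0$ as $Q_t\to Q_\infty$. (ii) Since $e^{At}\to0$, one has $\tilde{p}^f_t\rightharpoonup\delta_0$ weakly: for $\phi\in C_b(\Rd)$, $\int\phi\,\de\tilde{p}^f_t=\int\phi(e^{At}x)p_0(x)\,\de x\to\phi(0)$ by dominated convergence, using $p_0\in L^1(\Rd)$.

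Next I would split $p^f_t-p_\infty=\bigl(\tilde{K}(t,\cdot)-p_\infty\bigr)*\tilde{p}^f_t+\bigl(p_\infty*\tilde{p}^f_t-p_\infty\bigr)$, which is valid because $\tilde{p}^f_t$ is a probability measure. For the first term, Young's convolution inequality (against a probability measure) gives $\bigl\|(\tilde{K}(t,\cdot)-p_\infty)*\tilde{p}^f_t\bigr\|_2\le\|\tilde{K}(t,\cdot)-p_\infty\|_2\to0$ by (i). For the second term, viewing the convolution as an $L^2$-valued average, $p_\infty*\tilde{p}^f_t-p_\infty=\int_{\Rd}\bigl(p_\infty(\cdot-x)-p_\infty\bigr)\,\tilde{p}^f_t(\de x)$, Minkowski's integral inequality gives $\|p_\infty*\tilde{p}^f_t-p_\infty\|_2\le\int_{\Rd}\|p_\infty(\cdot-x)-p_\infty\|_2\,\tilde{p}^f_t(\de x)$; the integrand $x\mapsto\|p_\infty(\cdot-x)-p_\infty\|_2$ is bounded by $2\|p_\infty\|_2$, continuous (continuity of translation in $L^2$), and vanishes at $x=0$, so by $\tilde{p}^f_t\rightharpoonup\delta_0$ this integral tends to $0$. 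Adding the two estimates gives $\|p^f_t-p_\infty\|_2\to0$.

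The only points requiring genuine care are the properties of $Q_\infty$ — finiteness uses the Hurwitz hypothesis $\mathrm{spec}\,A\subset\mathbb{C}^-$, and invertibility uses the controllability Gramian hypothesis at a single finite time together with the monotonicity of $Q_t$ — and the $L^2$-continuity of the Gaussian family in its covariance parameter together with the interchange of limit and integral in the second term; the latter two are routine once $Q_t\to Q_\infty\succ0$ and $e^{At}\to0$ are established. The hypotheses $p_0\in L^2(\Omega)\cap\mathcal{P}_2(\Omega)$ are used only to ensure $p^f_t\in L^2$ with finite moments so that all the quantities above are well defined. An equivalent route is via Plancherel: $\widehat{p^f_t}(\xi)=e^{-\frac12\xi^TQ_t\xi}\,\widehat{p_0}(e^{A^Tt}\xi)$ converges pointwise to $e^{-\frac12\xi^TQ_\infty\xi}=\widehat{p_\infty}(\xi)$ and, for $t\ge t_0$, is dominated by $e^{-\frac12\lambda_{\min}(Q_{t_0})|\xi|^2}\in L^2(\Rd)$, so dominated convergence on the Fourier side finishes the proof.
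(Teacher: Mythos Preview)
Your argument is correct. Both you and the paper work from the same Gaussian kernel representation and the two basic facts $Q_t\to Q_\infty$ and $e^{At}\to 0$, but the mechanics differ. The paper writes $\|p^f_t-p_\infty\|_2^2$ directly as a double integral $\int_{\Rd}\bigl|\int_{\Rd}K(t,x,y)p_0(x)\,\de x-p_\infty(y)\bigr|^2\de y$, uses the uniform bound on $K$ to pass the limit through the inner $x$-integral by dominated convergence, and then invokes dominated convergence again for the outer $y$-integral. Your route instead decomposes $p^f_t-p_\infty=(\tilde K(t,\cdot)-p_\infty)*\tilde p^f_t+(p_\infty*\tilde p^f_t-p_\infty)$ and controls the two pieces with Young's inequality and Minkowski plus continuity of translation in $L^2$. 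The payoff of your decomposition is that each step comes with an explicit estimate, whereas the paper's second application of dominated convergence is stated without exhibiting an $L^1(\Rd_y)$ majorant for $|p^f_t(y)-p_\infty(y)|^2$ uniform in $t$; your argument sidesteps that issue entirely. Your Plancherel alternative is a genuinely different and very clean third route that neither you nor the paper needs but which gives the result in one line of dominated convergence on the Fourier side.
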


We now consider Problem \ref{prb3} for Algorithm 2 and LTI systems. 

\begin{proposition}
\label{prop:linreadl}
Suppose the system satisfies Assumption \ref{asmp:lti}. Let $p^f_t$ be the mild classical solution of \eqref{eq:Mainsysan}. Consider the control law
\[\pi_i(t,x) = -\frac{Y_i p^f_{t}}{ p^f_{t}} \]
Then $p^f_t$ also solves the continuity equation in the weak sense as in definition \eqref{eq:wkctty}. 
\end{proposition}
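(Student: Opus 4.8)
The plan is to argue, following the template of the proof of Proposition~\ref{prop:realdrift} but with the semigroup arguments replaced by the explicit Gaussian-kernel representation of $p^f_t$ recorded above, that the Kolmogorov forward equation \eqref{eq:kfe} for $p^f$ (with $v_i\equiv 0$) is \emph{literally} the Liouville equation \eqref{eq:fwdpdf_cs} for the control system \eqref{eq:ctrsys} in closed loop with $u=\pi(t,x)$. Only the second-order part of \eqref{eq:kfe} needs rewriting -- this is the usual ``completing-the-square'' passage from a diffusion term to a transport term -- and it is exact here because the control vector fields $g_i = b_i$ are \emph{constant}, hence divergence-free, so that $Y_i^* = -Y_i$ with no zeroth-order correction; the $g_0$-transport term already has the form appearing in \eqref{eq:kolmog2}. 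Unlike the driftless case, no extra $Y_i p_n/p_n$ term is needed, since with $v_i\equiv 0$ the equilibrium $p_\infty$ is not a free design parameter and the forward operator is not reshaped.

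First I would record the regularity we need. For $t>0$, Assumption~\ref{asmp:lti} makes the Gramian $Q_t$ positive definite, so the kernel \eqref{eq:kernel} is a non-degenerate Gaussian and the convolution formula $p^f_t = \tilde K(t,\cdot)\ast\tilde p^f_t$ exhibits $p^f_t$ as a Gaussian-smoothed probability measure. Hence $p^f_t\in C^\infty(\mathbb{R}^d)$, $p^f_t(x)>0$ for every $x$, $p^f_t$ is a classical solution of \eqref{eq:kfe} on $(0,T]\times\mathbb{R}^d$, and $p^f_t$ together with its spatial derivatives decays like a Gaussian at infinity. In particular the feedback
\[
\pi_i(t,x) \;=\; -\frac{Y_i p^f_t(x)}{p^f_t(x)} \;=\; -\,b_i\cdot\nabla\log p^f_t(x)
\]
is well defined and smooth on $(0,T]\times\mathbb{R}^d$ (affine in $x$ when $p_0$ is Gaussian, of at most linear growth in general).

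The algebraic core is the pointwise identity, valid because $Y_i^* = -Y_i$,
\[
(Y_i^*)^2 p^f_t \;=\; Y_i^*\big(Y_i^* p^f_t\big) \;=\; Y_i^*\big(-Y_i p^f_t\big) \;=\; Y_i^*\!\big(\pi_i(t,\cdot)\,p^f_t\big),
\]
for each $i=1,\dots,m$. Substituting this into \eqref{eq:kfe} and comparing with the operator form \eqref{eq:kolmog2} of the continuity equation shows that $p^f_t$ is a \emph{classical} solution of $\partial_t p^f = Y_0^* p^f + \sum_{i=1}^m Y_i^*(\pi_i\,p^f)$, i.e.\ of \eqref{eq:fwdpdf_cs} under $u=\pi(t,x)$. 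To pass to the weak sense of \eqref{eq:wkctty}, fix $\phi\in C^\infty_c(\mathbb{R}^d\times(0,T))$; since $\phi$ is supported away from $t=0$ the regularity above holds on a neighbourhood of $\operatorname{supp}\phi$. Multiplying the classical identity by $\phi$, integrating over $\mathbb{R}^d\times(0,T)$, and integrating by parts -- transferring $\partial_t$ and the operators $Y_i^*$ onto $\phi$ (which turns each $Y_i^*$ into $g_i\cdot\nabla$ acting on $\phi$), all boundary terms vanishing by compact support -- yields exactly \eqref{eq:wkctty} with velocity field $v(t,x)=g_0(x)+\sum_{i=1}^m g_i(x)\pi_i(t,x)$.

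I expect no serious obstacle: the substantive work is already done by the explicit kernel \eqref{eq:kernel}. The two points that require care are (i) that $p^f_t$ is smooth and \emph{strictly positive} for $t>0$, so that the quotient $Y_i p^f_t/p^f_t$ defining the feedback is well posed -- this is immediate from $p^f_t$ being a Gaussian convolution, which in turn relies on $\Omega=\mathbb{R}^d$ in Assumption~\ref{asmp:lti}, since the explicit convolution representation is unavailable on a bounded domain (this is precisely why the driftless case used semigroup/irreducibility arguments instead, cf.\ Proposition~\ref{prop:irred}); and (ii) using test functions $\phi$ compactly supported in the \emph{open} interval $(0,T)$, so that no regularity or positivity of the initial datum $p_0=p_{\mathrm{target}}$ beyond $p_0\in L^2(\Omega)\cap\mathcal P(\Omega)$ is required. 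With these in hand the passage from \eqref{eq:kfe} to \eqref{eq:wkctty} is a direct integration-by-parts computation, specialized to the nonholonomic operators $Y_i$ and made exact by the constancy of the $g_i$.
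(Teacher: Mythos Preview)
Your approach is correct and is exactly the paper's: the paper's two-sentence proof simply observes that $p^f_t$ is a classical solution of \eqref{eq:kfe} with $v_i\equiv 0$ and that plugging in $\pi_i$ yields the continuity equation, and you have supplied precisely the details that make this rigorous (kernel-based smoothness and strict positivity of $p^f_t$ for $t>0$, the identity $Y_i^*=-Y_i$ for constant $b_i$, and the integration by parts against compactly supported $\phi$). One minor slip to watch: substituting into \eqref{eq:kfe} actually gives $\partial_t p^f = -Y_0^* p^f + \sum_i Y_i^*(\pi_i p^f)$, i.e.\ the continuity equation for the forward-SDE drift $-g_0$ in \eqref{eq:nhmsde}, not $+Y_0^*$ as you wrote; the paper is equally loose on this sign convention.
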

\begin{proof}
This follows from the fact that $p^f_t$ is a classical solution of \eqref{eq:kfe} for $v_i \equiv 0$. Hence, by plugging in the expression for $\pi_i$ in the continuity equation, it is a solution of \eqref{eq:wkctty}.  
\end{proof}

Next, we use the established regularity result in conjunction with the superposition principle from optimal transport theory (Theorem \ref{thm:sup}) to establish a determistic realization of the reverse process as required in Problem \ref{problem:realization}.

\begin{theorem}
\label{thm:revglor2}
Let $p_0 \in L^2(\Omega)$ be a probability density with compact support. Suppose the system satisfies Assumption \ref{asmp:lti}. 
Then for every $\epsilon>0$, there exists a probability measure $\mathbb{P} \in \mathcal{P}(\Rd \times C([0,T-\epsilon];\Rd))$ such that 
\begin{equation}
\dot \gamma_t =A\gamma_t+ B \boldsymbol{\pi}(t,\gamma_t) ,~~ \gamma(0) = y
\end{equation}
for $\mathbb{P}$ almost every $(y,\gamma) \in \Rd \times C([0,T-\epsilon];\Rd)$,
where $\boldsymbol{\pi}(t,\gamma_t)$ is given by
\[ \pi_i(t,x) = \frac{Y_i p^f_{T-t}}{ p^f_{T-t}} \]
for all $t \in [0,T-\epsilon]$.
\end{theorem}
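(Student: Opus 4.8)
The plan is to reduce Theorem~\ref{thm:revglor2} to the superposition principle, Theorem~\ref{thm:sup}, in the same way the driftless case (Theorem~\ref{thm:detcontaffalg2}) was handled: identify the time‑reversed density flow $\mu_t := p^{\mathrm f}_{T-t}\,dx$ as a weak solution of the continuity equation~\eqref{eq:fwdpdf_cs} driven by the LTI vector field with the stated feedback, and then verify the integrability hypothesis of Theorem~\ref{thm:sup}. Since $\Omega=\mathbb{R}^d$ there are no boundary or local‑time terms, so the bookkeeping is slightly simpler than in the compact case.

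First I would record the regularity of the forward flow. Fix $\epsilon>0$; the relevant forward times are $s = T-t \in [\epsilon,T]$. By the kernel representation $p^{\mathrm f}_s = \tilde K(s,\cdot)*\tilde p^{\mathrm f}_s$ with $\tilde p^{\mathrm f}_s = (e^{As})_\# p_0$ a probability measure and $\tilde K(s,\cdot)$ the nondegenerate Gaussian of covariance $Q_s$ (invertible for $s>0$ by Assumption~\ref{asmp:lti}), the density $p^{\mathrm f}_s$ is $C^\infty$ and strictly positive on $\mathbb{R}^d$ for every $s>0$. Hence $\pi_i(t,x) = Y_i p^{\mathrm f}_{T-t}(x)/p^{\mathrm f}_{T-t}(x)$ is well defined and Borel on $[0,T-\epsilon]\times\mathbb{R}^d$, and the corresponding velocity field is
\[
 v(t,x) \;=\; Ax + \sum_{i=1}^m g_i(x)\,\pi_i(t,x) \;=\; Ax + BB^{T}\nabla\log p^{\mathrm f}_{T-t}(x).
\]
By Proposition~\ref{prop:linreadl} and the reversibility of the continuity equation — concretely, substituting $p^{\mathrm c}_t = p^{\mathrm f}_{T-t}$ into~\eqref{eq:kolmog2} and using~\eqref{eq:kfe} with $v_i\equiv 0$ — the curve $t\mapsto\mu_t$ is a weak solution of~\eqref{eq:wkctty} for this $v$ on $[0,T-\epsilon]$.

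Second, I would verify $\int_0^{T-\epsilon}\!\int_{\mathbb{R}^d}\frac{|v(t,x)|}{1+|x|}\,d\mu_t(x)\,dt < \infty$. Split $v$ into $Ax$ and the score part $BB^{T}\nabla\log p^{\mathrm f}_{T-t}$. The linear part contributes at most $\|A\|(T-\epsilon)$ because $\tfrac{|Ax|}{1+|x|}\le\|A\|$ and each $\mu_t$ has mass one. For the score part, using $\tfrac1{1+|x|}\le 1$ and the substitution $s=T-t$, the contribution is at most $\|BB^{T}\|\int_\epsilon^T \|\nabla p^{\mathrm f}_s\|_{L^1(\mathbb{R}^d)}\,ds$, the weight $p^{\mathrm f}_s$ cancelling the denominator in $\nabla\log p^{\mathrm f}_s$. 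Young's convolution inequality applied to $\nabla p^{\mathrm f}_s = (\nabla\tilde K(s,\cdot))*\tilde p^{\mathrm f}_s$ and Cauchy--Schwarz applied to $\nabla\tilde K(s,\cdot) = -Q_s^{-1}x\,\tilde K(s,\cdot)$ give $\|\nabla p^{\mathrm f}_s\|_{L^1} \le \|\nabla\tilde K(s,\cdot)\|_{L^1} \le \sqrt{\mathrm{tr}(Q_s^{-1})}$; and since $s\mapsto Q_s$ is nondecreasing in the positive‑semidefinite order, $\mathrm{tr}(Q_s^{-1})\le\mathrm{tr}(Q_\epsilon^{-1})$ for all $s\ge\epsilon$, so the integral is at most $\|BB^{T}\|\sqrt{\mathrm{tr}(Q_\epsilon^{-1})}\,(T-\epsilon)<\infty$. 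This estimate is the LTI counterpart of the regularity bound used for Theorem~\ref{thm:detcontaffalg2}, and is where essentially all the work sits.

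Finally, with $\mu_t$ a weak solution of the continuity equation for $v$ and the integrability bound in hand, Theorem~\ref{thm:sup} produces a probability measure $\mathbb{P}\in\mathcal{P}(\mathbb{R}^d\times C([0,T-\epsilon];\mathbb{R}^d))$ concentrated on curves with $\dot\gamma_t = v(t,\gamma_t) = A\gamma_t + B\boldsymbol{\pi}(t,\gamma_t)$, $\gamma(0)=y$, and marginals $\mathbb{P}(\gamma_t\in dx)=\mu_t = p^{\mathrm f}_{T-t}(x)\,dx$ for $t\in[0,T-\epsilon]$, which is exactly the claim. I expect the main obstacle to be the uniform‑in‑$s$ control of $\|\nabla p^{\mathrm f}_s\|_{L^1}$ down to $s=\epsilon$: it relies essentially on the invertibility of the controllability Gramian $Q_s$ for $s>0$ (Assumption~\ref{asmp:lti}), since otherwise $p^{\mathrm f}_s$ would be supported on a proper affine subspace and $\nabla\log p^{\mathrm f}_s$ would fail to be $p^{\mathrm f}_s$‑integrable; it is also the reason the statement is restricted to $\epsilon>0$, because at $s=0$ the datum $p_0$ is only assumed to lie in $L^2$, so its score need not be integrable and the flow need not be realizable up to the terminal time.
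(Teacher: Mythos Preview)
Your proof is correct and follows the same overall architecture as the paper's: show that $t\mapsto p^{\mathrm f}_{T-t}$ is a weak solution of the continuity equation for $v(t,x)=Ax+BB^{T}\nabla\log p^{\mathrm f}_{T-t}(x)$ (Proposition~\ref{prop:linreadl}), verify the integrability hypothesis of Theorem~\ref{thm:sup} by splitting $v$ into the drift $Ax$ and the score part, and conclude.

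The one substantive difference is in how the score term is controlled. The paper proves a \emph{pointwise} linear-growth bound $|\nabla\log p^{\mathrm f}_s(y)|\le \|Q_s^{-1}\|(|y|+\|e^{As}\|R)$ (Theorem~\ref{thm:score_linear_growth}), which uses the compact-support hypothesis on $p_0$ to bound the conditional mean appearing in the posterior-mean representation of the score; this yields a vector field of linear growth, so in fact classical ODE theory already gives global solutions and the superposition principle is invoked only for uniformity of presentation. Your route instead bounds $\|\nabla p^{\mathrm f}_s\|_{L^1}$ directly via Young's inequality for the convolution $p^{\mathrm f}_s=\tilde K(s,\cdot)*\tilde p^{\mathrm f}_s$ and a Gaussian second-moment computation, giving $\|\nabla p^{\mathrm f}_s\|_{L^1}\le \sqrt{\mathrm{tr}(Q_s^{-1})}\le \sqrt{\mathrm{tr}(Q_\epsilon^{-1})}$. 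This is a cleaner estimate and, notably, does not use the compact-support assumption on $p_0$ at all (nor even $p_0\in L^2$); on the other hand it does not deliver the pointwise linear growth that would let you bypass the superposition principle entirely.
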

\rev{
Since in the linear case we are not able to set $\varepsilon = 0$ in Theorem~\ref{thm:revglor2}, we prove a slightly weaker claim addressing Problem~\ref{prob:set_convergence}. In particular, we cannot claim the existence of a probability measure on $C([0,T];\mathbb{R}^d)$ realizing the controlled dynamics up to time $T$, but only on $C([0,T);\mathbb{R}^d)$. Nevertheless, the marginals of $(\gamma_t)$, given by $p^f_{T-t}$ are well defined for all $t<T$ and converge weakly to $p_0$ as $t\rightarrow T$. 

On the one hand, this allows us to design a linear system for general control objectives such as multi-stability by choosing $p_0$ as a finite sum of Dirac measures. Complementarily, we can establish the following target set convergence result.


\begin{corollary}
\label{cor:trglin}
\textbf{(Target Set Control for LTI)}
Assume the hypotheses of Theorem~\ref{thm:revglor2}.
Let the support of $p_0$ be a contained in a compact set
$\Omega_{\rm target} \subset \Omega$.
Let $d_{\Omega_{\rm target}}:\mathbb{R}^d\to\mathbb{R}_{\ge 0}$ denote the distance function to
$\Omega_{\rm target}$.
Then, for every $\eta>0$,
\[
\lim_{t \rightarrow T}
\mathbb{P}\!\left(d_{\Omega_{\rm target}}(\gamma_t)>\eta\right)=0.
\]
\label{cor:lintargset}
\end{corollary}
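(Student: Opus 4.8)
The plan is to reduce the statement to a weak-convergence fact about the one-dimensional marginals of the realized process and then invoke the portmanteau theorem. By Theorem~\ref{thm:revglor2}, for each $\epsilon>0$ the constructed measure $\mathbb{P}$ on $\mathbb{R}^d\times C([0,T-\epsilon];\mathbb{R}^d)$ has marginals $\mathbb{P}(\gamma_t\in dx)=p^f_{T-t}(x)\,dx$, and this identification does not depend on $\epsilon$; so I would first record that $\mu_t:=\mathrm{Law}(\gamma_t)=p^f_{T-t}\,dx$ is a well-defined curve of probability measures on $\mathbb{R}^d$ for every $t\in[0,T)$, and that the quantity $\mathbb{P}(d_{\Omega_{\rm target}}(\gamma_t)>\eta)$ appearing in the statement equals $\mu_t(\{d_{\Omega_{\rm target}}>\eta\})$ (for $t$ close to $T$, choose $\epsilon$ accordingly). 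Next, as $t\uparrow T$ we have $s:=T-t\downarrow 0$, and by the remark preceding the corollary $p^f_s\to p_0$ weakly as $s\downarrow 0$; hence $\mu_t\to p_0$ weakly as $t\uparrow T$.

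With that in hand, I would fix $\eta>0$ and set $F_\eta:=\{x\in\mathbb{R}^d:d_{\Omega_{\rm target}}(x)\ge\eta\}$, which is closed since $d_{\Omega_{\rm target}}$ is $1$-Lipschitz. Because $\Omega_{\rm target}$ is compact, every point of $\Omega_{\rm target}$ has zero distance to $\Omega_{\rm target}$, so $F_\eta\cap\Omega_{\rm target}=\emptyset$; combined with $\mathrm{supp}(p_0)\subseteq\Omega_{\rm target}$ this gives $p_0(F_\eta)=0$. Applying the portmanteau theorem to the closed set $F_\eta$ and the weak convergence $\mu_t\to p_0$ yields $\limsup_{t\uparrow T}\mu_t(F_\eta)\le p_0(F_\eta)=0$. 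Since $\{d_{\Omega_{\rm target}}>\eta\}\subseteq F_\eta$, this gives $\mathbb{P}(d_{\Omega_{\rm target}}(\gamma_t)>\eta)=\mu_t(\{d_{\Omega_{\rm target}}>\eta\})\le\mu_t(F_\eta)\to 0$ as $t\uparrow T$, which is exactly the claim.

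The only nonroutine ingredient is the weak convergence $p^f_s\to p_0$ as $s\downarrow 0$ at the singular endpoint, and I would justify it via the Gaussian representation recorded before the statement: $p^f_s=\tilde K(s,\cdot)*(e^{As})_\#p_0$, where $\tilde K(s,\cdot)$ is the centered Gaussian density with covariance $Q_s=\int_0^s e^{A\tau}BB^Te^{A^T\tau}\,d\tau$. As $s\downarrow 0$ one has $Q_s\to 0$, so $\tilde K(s,\cdot)\to\delta_0$ weakly, and $e^{As}\to I$, so $(e^{As})_\#p_0\to p_0$ weakly; joint continuity of convolution with respect to weak convergence (test against an arbitrary bounded continuous function and pass to the limit by dominated convergence) then gives $p^f_s\to p_0$ weakly. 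Beyond this, everything is a standard portmanteau argument, so I expect this weak-convergence step to be the main (and essentially only) obstacle. I would also note explicitly that, consistent with the impossibility of taking $\epsilon=0$ in the linear case, the conclusion is convergence in probability toward $\Omega_{\rm target}$ as $t\to T$ rather than an exact terminal inclusion $\gamma_T\in\Omega_{\rm target}$ almost surely.
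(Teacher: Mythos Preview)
Your proposal is correct and follows essentially the same route as the paper: both arguments identify $\mathrm{Law}(\gamma_t)=p^f_{T-t}$, use weak convergence $p^f_{T-t}\Rightarrow p_0$ as $t\uparrow T$, and exploit $\mathrm{supp}(p_0)\subseteq\Omega_{\rm target}$. The only cosmetic difference is that the paper pushes forward through the continuous map $d_{\Omega_{\rm target}}$ to get $d_{\Omega_{\rm target}}(\gamma_t)\Rightarrow\delta_0$ and then cites the standard fact that convergence in distribution to a constant implies convergence in probability, whereas you apply the portmanteau theorem directly to the closed set $F_\eta=\{d_{\Omega_{\rm target}}\ge\eta\}$; these are equivalent, and your version simply unwraps the Billingsley citation. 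Your explicit justification of the weak convergence $p^f_s\to p_0$ via the kernel representation $p^f_s=\tilde K(s,\cdot)*(e^{As})_\#p_0$ is a useful addition that the paper leaves implicit.
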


} \section{Numerical Experiments}
\label{sec:numer}

We demonstrate the effectiveness of the proposed control algorithms on nonlinear dynamical systems\footnote{\href{https://github.com/darshangm/diffusion-nonlinear-control}{https://github.com/darshangm/diffusion-nonlinear-control}}. We further compare the two algorithms for different test beds. The numerical experiments were performed on a machine with Intel i9-9900K CPU with 128GB RAM and the Nvidia Quadro RTX 4000 GPU.

\subsubsection{Five-dimensional bilinear system}

We consider the following five-dimensional driftless system.
\begin{align}\label{eqn:fived-sys}
\begin{aligned}
\dot{x}_1 &= u_1, \ \ \dot{x}_2 = u_2, \ \ \dot{x}_3 = x_2 u_1, \\ \dot{x}_4 &= x_3 u_1, \ \ \dot{x}_5 = x_4 u_1
\end{aligned}
\end{align}
{\color{black} We sample $M$ data points from a Gaussian distribution $\mathcal{N}(0,0.5I) = p_{\text{target}}$. We set $V(x)=0$ for the forward process, therefore $p_n$ is the uniform distribution defined over the domain $\Omega= (-4,4)^5$ and consider the time horizon $[0,6]$.} The neural network used to estimate the controller $\texttt{NN}(t,x,\theta)$ for both algorithms has four hidden layers. We first demonstrate the effect of the number of training samples $M$ on the final estimated KL-divergence $\widehat{\text{KL}}(p^{\textup{c}}\big|p^{\textup{f}}_0)$. To test the final KL divergence, we apply the learned controller on 2000 uniformly sampled points in $\Omega$. We set the control task as one of stabilizing the Gaussian distribution $\mathcal{N}(\bf{0},0.5\bf{I})$.

In Figure~\ref{fig:five-dim-sys}(a) we show the KL divergence between the controlled density as given by the two algorithms and reference density to compare their performance. We see that diffusion-based algorithm 1 has a higher KL divergence in the middle of the horizon because it depends on careful sampling of the densities over the entire time horizon. The score-based model however performs better and achieves a denser distribution around the origin.

\begin{figure}[tbh]
\begin{tikzpicture}
\draw[blue, thick] (-2.8,2) -- (-1.8,2);
\node at (-0.8,2) {Algorithm 1};
\draw[red, thick] (1,2) -- (2,2);
\node at (3,2) {Algorithm 2};
\node (img1) at (-1.5,0) {\includegraphics[width=0.200\textwidth]{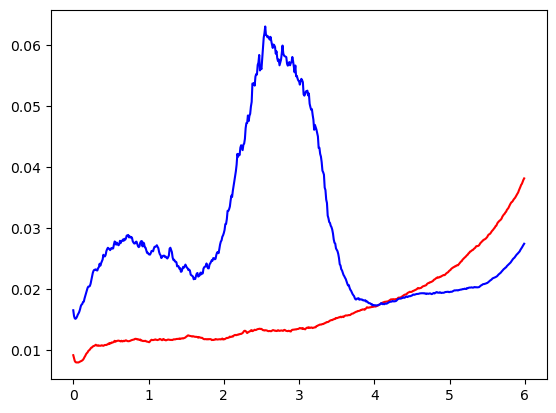}};
\node[above of= img1, node distance=0cm, yshift=-1.8cm,font=\color{black}]  {Training iterations};  
\node[above of= img1, node distance=0cm, yshift=-2.3cm,font=\color{black}]  {\small (a)};
\node[left of= img1, node distance=0cm, rotate=90, anchor=center,yshift=2.2cm,font=\color{black}] {KL divergence };
\node (img2) at  (2.8,0)  {\includegraphics[width=0.200\textwidth]{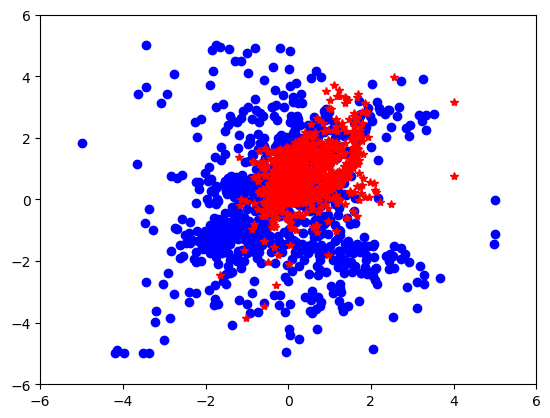}};
\node[above of= img2, node distance=0cm, yshift=-1.8cm,font=\color{black}]  {$x_1$}; 
\node[left of= img2, node distance=0cm, rotate=90, anchor=center,yshift=2.0cm,font=\color{black}] {$x_2$};
\node[above of= img2, node distance=0cm, yshift=-2.3cm,font=\color{black}]  {(b)};
\end{tikzpicture}
\caption{Experiments with a five-dimensional nonlinear system~\eqref{eqn:fived-sys}. (a) KL divergence between forward and reverse processes for algorithms 1 and 2. (b) Position of samples at final time step for algorithms 1 and 2.  }
\label{fig:five-dim-sys}
\end{figure}

\subsubsection{Unicycle robot}
In this experiment, we consider the unicycle dynamics,
\begin{align}\label{eqn:unicycle}
\begin{aligned}
\dot{x}_1 &= u_1 \cos(x_3), \ \
\dot{x}_2 = u_1 \sin(x_3), \ \
\dot{x}_3 = u_2.    
\end{aligned}
\end{align}
We sample $M$ training samples from $p_\text{target} = \mathcal{N}(0,0.2I)$ and $p_n = \mathcal{N}(0,I)$. Note that this experiment demonstrates that our DDPM-based feedback algorithm can be applied for cases with $V(x) \neq 0$. Figure~\ref{fig:unicycle-expts}(a) depicts the performance of the controller for different number of measurement instances. It shows that with fewer measurement instances, it is more difficult for the controller to track the forward density. Figure~\ref{fig:unicycle-expts}(b) further validates our theoretical result even when $V(x)\neq0$. With higher number of training samples the neural network indeed learns to minimize the final KL divergence.

\begin{figure}[tbh]
\begin{tikzpicture}
\draw[blue, thick] (-2.8,2) -- (-1.8,2);
\node at (-0.8,2) {Algorithm 1};
\draw[red, thick] (1,2) -- (2,2);
\node at (3,2) {Algorithm 2};
\node (img1) at (-1.5,0) {\includegraphics[width=0.200\textwidth]{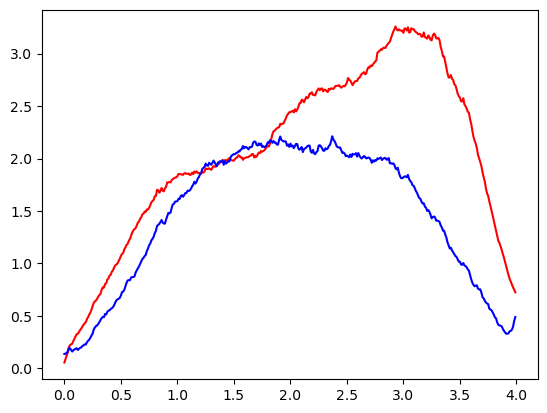}};
\node[above of= img1, node distance=0cm, yshift=-1.8cm,font=\color{black}]  {Training iterations};  
\node[above of= img1, node distance=0cm, yshift=-2.3cm,font=\color{black}]  {\small (a)};
\node[left of= img1, node distance=0cm, rotate=90, anchor=center,yshift=2.2cm,font=\color{black}] {KL divergence };
\node (img2) at  (2.8,0)  {\includegraphics[width=0.200\textwidth]{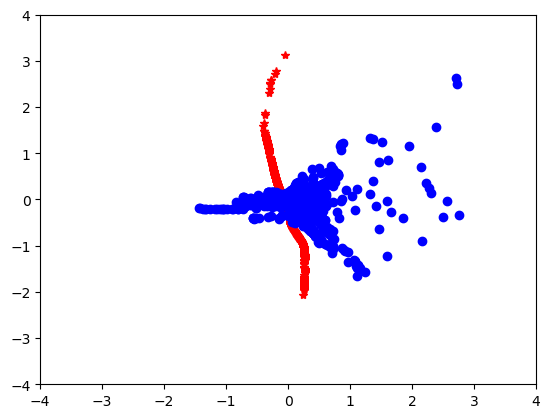}};
\node[above of= img2, node distance=0cm, yshift=-1.8cm,font=\color{black}]  {$x_1$}; 
\node[left of= img2, node distance=0cm, rotate=90, anchor=center,yshift=2.0cm,font=\color{black}] {$x_2$};
\node[above of= img2, node distance=0cm, yshift=-2.3cm,font=\color{black}]  {(b)};
\end{tikzpicture}
\caption{Experiments with unicycle dynamics~\eqref{eqn:unicycle} with {\color{black} $p_\text{target} = \mathcal{N}(0,0.2I)$.} (a) Final estimated KL divergence for different numbers of measurement instances $N$ vs. training iterations: shows that more measurement instances are required to achieve better feedback control when going from one Gaussian distribution to another, (b) Final KL divergence vs training iterations for different number of training samples: shows that the neural network can identify the controller with a sufficiently large number of training samples and state measuring instances.}
\label{fig:unicycle-expts}
\end{figure}

\subsubsection{Unicycle robot with obstacles}
\begin{figure*}[tbh]
\centering
\begin{tikzpicture}
\node (img1) at (-3.5,0) {\includegraphics[width=0.2800\textwidth]{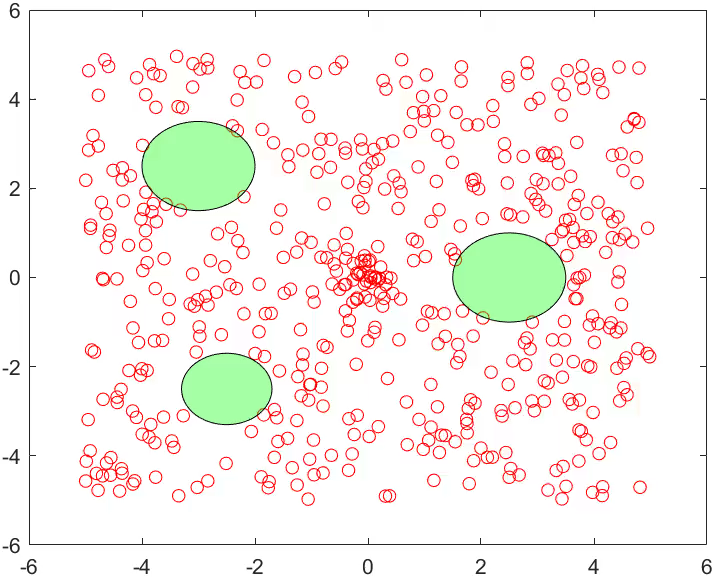}};
\node (img2) at  (2.5,0)  {\includegraphics[width=0.2800\textwidth]{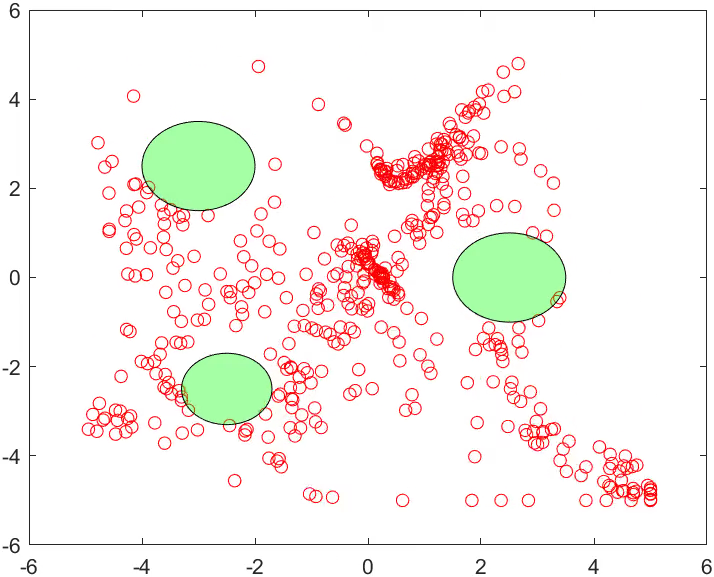}};
\node (img1) at (8.5,0) {\includegraphics[width=0.280\textwidth]{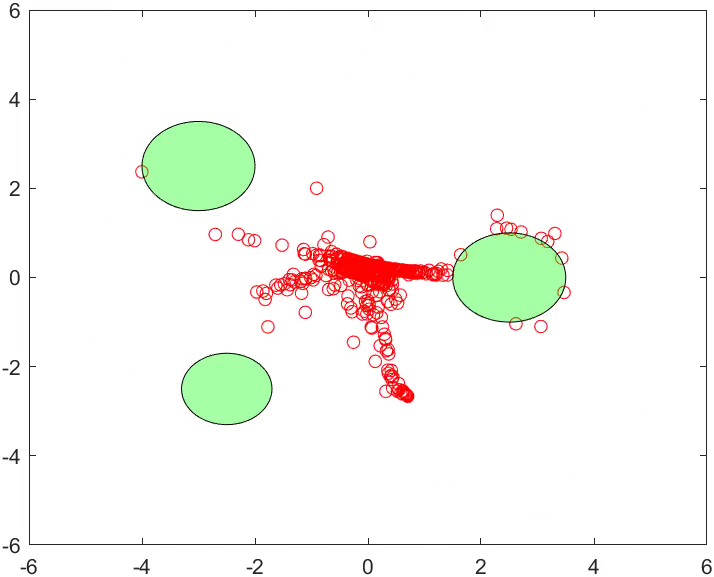}};  
\end{tikzpicture}
\caption{Experiments with unicycle dynamics ~\eqref{eqn:unicycle} with obstacles in the environment with {\color{black} $p_\text{target} = \mathcal{N}(0,0.2I)$.} The evolution of particles at different time instances in the control horizon of 10 seconds is depicted in the figure. It is evident that particles make use of the space between the obstacles to stabilize $p_\text{target}$.}
\label{fig:unicycle-obst}
\end{figure*}

In this experiment, we have the same unicycle dynamics for all the points. However, we include obstacles in the environment which are denoted by the green circles in Figure~\ref{fig:unicycle-obst}. We use algorithm 1 to train a controller for the unicycle robots to avoid obstacles and stabilize the Gaussian distribution centered at the origin. {\color{black} The noise distribution is taken to be the uniform distribution.} We consider a time horizon of 10 seconds. In this scenario, in the forward process, the unicycle robots are strictly not allowed to entire the regions around the obstacles using a reflection mechanism. That is, if by taking a control action, the point enters the obstacle, the point is moved back to the previous state. We show the evolution of the reverse process after the learned controller with the obstacles. At each time instance, we see the evolution of the particles making use of the space between the obstacles to stabilize the Gaussian distribution centered at the origin. \rev{The terminal distribution concentrates near the prescribed Gaussian parameters in terms of mean and covariance, but does not match the ideal normal density exactly. This might be due to inherent degeneracy of score matching loss, and we believe that alternative losses such as the flow matching \cite{elamvazhuthi2025flow} loss could potentially address this issue.}

\begin{figure}[htbp]
\begin{center}
\includegraphics[width=0.8\columnwidth]{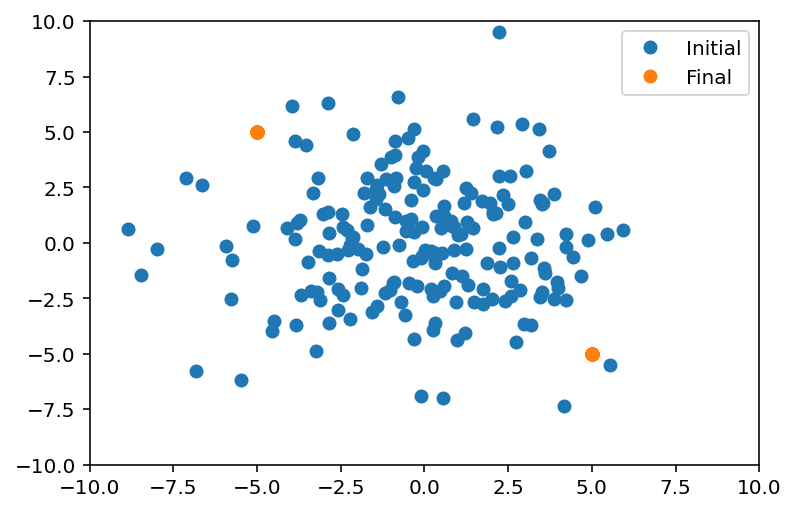}    
\end{center}
\caption{Experiments with four dimensional linear system \eqref{eqn:DI}} with $p_\text{target} =\frac{1}{2}\delta_{x^a} +  \frac{1}{2}\delta_{x^b} $ with $x^a= (-5,0,5,0)$ and  $x^b = (5,0,-5,0)$. The initial and final distributions of the particles are shown.
\label{fig:double-integrator}
\end{figure}
\subsubsection{Linear System} 
In this experiment,  we consider the linear double integrator system with unstable damping with two position cooridinates and two velocity coordinates
\begin{align}\label{eqn:DI}
\begin{aligned}
\dot{x}_1 &= x_2, \ \
\dot{x}_2 = x_1 + u_1,  \ \ \dot{x}_3 = x_2, \ \
\dot{x}_4 = x_3 + u_2 
\end{aligned}
\end{align}
The goal in this experiment is to stabilize the system to a sum of two Dirac measures $p_{\text{target}} =\frac{1}{2} \delta_{x^a} +  \frac{1}{2}\delta_{x^b} $at $x_a= (-5,0,5,0)$ and  $x^b = (5,0,-5,0)$. In this case, we use the controllability Gramian to compute the score function, and do not use any neural network. The distribution of the forward process $p^f_t$ is given in closed form as
\[p^f_t =  K*p_{\text{target}} = \frac{1}{2}K(t,x^a,y) + \frac{1}{2} K(t,x^b,y) \]
where the function $K$ is defined in \eqref{eq:kernel}. 
The initial conditions are sampled from \[p_{\infty}(x) =\frac{1}{(2 \pi)^{d/2} {\rm det}(Q_{\infty})^{1/2} }e^{-\frac{1}{2}x^T Q^{-1}_{\infty} x} \] 
where $Q_{\infty}$ is the infinite horizon controllability Gramian for the (open-loop stable) forward process with system $(A,B)$ matrices,

\begin{equation}
A = \begin{bmatrix}
0 & 1 & 0 & 0\\
0 & 1 & 0 & 0\\
0 &  0 & 0& 1\\
0 &  0 & 0 & 1\\
\end{bmatrix}, ~~ 
B = \begin{bmatrix}
0 & 0 \\
1 & 0\\
0 & 0\\
0 & 1\\
\end{bmatrix}.
\end{equation}
Note that for the forward system, with system matrices are $(-A,-B)$. In Figure \ref{fig:double-integrator}, we see that for this experiment the system is rendered bistable, and initial conditions are transported exactly to the target final states, depending on their initial conditions.

\rev{
\section{Conclusion}
We presented an approach to control nonlinear systems inspired by denoising diffusion-based generative modeling. We established the existence of deterministic realizations of reverse processes that eliminate the need for stochasticity in feedback synthesis. Potential future directions include proving that the reverse process is asymptotically stabilizing under more general conditions. From a numerical standpoint, it may be beneficial to augment the score matching loss to explicitly promote stability, as the standard formulation does not guarantee that the learned control laws inherit the stabilizing properties of the ideal reverse dynamics.
\appendix

\label{app:literature}

\subsection{Denoising Diffusion Probabilistic model}
	\label{sec:bgrnd}
    This section introduces Denoising Diffusion Probabilistic Models (DDPMs) \cite{song2020score}. DDPMs are a generative modeling technique that learns to sample from an unknown data density $p_\text{target}$ by (i) transforming it into a known  density $p_n$ from which one can easily sample (such density has been referred to as the \emph{noise density}), (ii) sampling from it, and finally (iii) reversing the transformation.  This is done using two stochastic differential equations. First, the {\it forward process} is a stochastic differential equation of the form
	\begin{align}
    \label{eq:sdeapp}
		\textup{d} X^\text{f} = V(X^\text{f})dt + \sqrt{2} \, \textup{d} W + n(X^f)\textup{d} Z,
	\end{align}
	where $X_0$ has density $p_\text{target}$. In \eqref{eq:sdeapp}, $W$ denotes the standard Brownian motion and $Z$ is a stochastic process that ensures that the state remains confined to a domain of interest $\Omega$. The vector field $\map{V}{\mathbb{R}^d}{\mathbb{R}^d}$ is chosen such that the density of the random variable $X$ converges to $p_n$. For example, if $V = \nabla \log p_n= \frac{\nabla p_n}{p_n}$ then $\lim_{t \rightarrow \infty} p_t = p_n$. Such convergence is guaranteed by the Fokker Planck equation that governs the evolution of the density $p$ of the state  $X$ of~\eqref{eq:sdeapp}: 
	\begin{align}
		\label{eq:fwdpdf}
		\partial_t p = \Delta p - \text{div}(V(x) p) = \text{div}  ([\nabla \log p -V(x)] p) . 
	\end{align}
	where  $\Delta : = \sum_{i=1}^d \partial_{x^2_i}$ and $\text{div} : = \sum_{i=1}^d \partial_{x_i}$ denote the Laplacian and the  divergence operator, and $p_0 = p_\text{target}$.
	
	
	When $\Omega$ is a strict subset of $\mathbb{R}^d$, this equation is additionally supplemented by a boundary condition, known as the {\it zero flux} boundary condition
	\begin{equation}
		\vec{n}(x)\cdot ( \nabla p _t(x) - V(x)) = 0 ~~~ \mbox{on} ~~~ \partial \Omega ,
	\end{equation}
	where $\vec{n}(x)$ is the unit vector normal to the boundary $\partial \Omega$ of the domain $\Omega$.
	This boundary condition ensures that $\int_{\mathbb{R}^d} p_t(x)dx = 1$ for all $t\geq 0$. An advantage of considering the situation of bounded domain is that one can choose $V\equiv 0$ and the noise density can be taken to be the uniform density on $\Omega$. This property will be useful when solving our control problem.
	

	The second part of the DDPM is the \emph{reverse process}, which aims
	to transport the density from $p_n$ back to $p_\text{target}$. There are multiple possible choices of the reverse process, including the {\it probabilistic flow ODE} given by
	\begin{align}\label{eq: reverse process}
		{\color{black} \textup{d}X^{\textup{r}} = \nabla \log p_{T-t}(X^{\textup{r}})\textup{d}t - V (X^{\textup{r}})\textup{d}t,} 
	\end{align}
	where $X^{\textup{r}}_0$ has density $p_n$ and $T$ is the horizon of the reverse process. 
	The evolution of the density $p^{\textup{r}}$ of the reverse process \eqref{eq: reverse process} is
	\begin{align}
		\partial_t p^{\textup{r}} = \text{div}  (-[\nabla \log p_{T-t} +V(x)] p^{\textup{r}}) .
	\end{align}
	In the ideal case, $p^r_T = p_{\rm target}$ and, $p_{T-t} = p^{\textup{r}}_t$ for all $t \in [0,T]$. 
	After simulating the forward process, one can learn the \emph{score} $\nabla \log p_{T-t}$ to run the reverse process to effectively sample from $p_{\text{target}}$.
	In practice, $p^{\textup{f}}_T \approx p_n$, and hence $p^{\textup{r}}_0  = p_n$, and one does not have complete information about the score. Usually, a neural network $\texttt{NN}(t,x,\theta)$ is used to approximate the score by solving the optimization problem
	\begin{align} 
		&\min_{\theta}  \int_0^T \mathbb{E}_{p_{T-t}} |\texttt{NN}(t,\cdot,\theta) -\nabla \log p_{T-t} |^2\textup{d}t \nonumber \\
		=&\min_{\theta}  \int_0^T \int_{\Rd} |\texttt{NN}(t,x,\theta) -\nabla \log p_{T-t}(x) |^2p_{T-t}(x)\textup{d}t 
	\end{align}
	This objective ensures that the solution $p^\theta_t$ of the equation 
	\begin{align}
		\partial_t p^\theta = \text{div}  ( [ \texttt{NN}(t,x,\theta) +V (x)] p^\theta),
	\end{align}
	where $p^\theta_0 = p_n$,  is close to $p_t$ so that we can sample from $p_{\text{target}}$ by running the reverse ODE,
	\begin{align} 
		\textup{d}X^\theta = \texttt{NN}(t,X^\theta,\theta)-  V (X^\theta)\textup{d}t 
	\end{align}
	such that $X_0$ is sampled from $p_{\text{noise}}$. 

\subsection{Computations} 
\label{subsec:comp}
In this section, we derive in detail the relation between the form $\omega_a(u,v)$ and the operator $\mathcal{A}^a $
\begin{align*}
&\omega_a(u,v) \\
& = \sum_{i=1}^m \int_{\Omega} p_n(x) Y_i \left( \frac{u(x)}{p_n(x)} \right) \cdot Y_i \left( \frac{v(x)}{p_n(x)} \right) \, dx \\
&= - \sum_{i=1}^m \int_{\Omega} Y_i^* \left( p_n(x) Y_i \left( \frac{u(x)}{p_n(x)} \right) \right) \cdot \frac{v(x)}{p_n(x)} \, dx \\
&\quad + \sum_{i=1}^m \int_{\partial \Omega} \vec{n} \cdot \left( g_i Y_i \left( \frac{v(x)}{p_n(x)} \right) \right) \frac{v(x)}{p_n(x)} \, dS \\
&= - \sum_{i=1}^m \int_{\Omega} Y_i^* \left( \frac{p_n(x) Y_i u(x) - u(x) Y_i p_n(x)}{p_n(x)} \right) \cdot \frac{v(x)}{p_n(x)} \, dx \\
&\quad + \sum_{i=1}^m \int_{\partial \Omega} \vec{n} \cdot \left( g_i Y_i \left( \frac{v(x)}{p_n(x)} \right) \right) \frac{v(x)}{p_n(x)} \, dS \\
&= - \sum_{i=1}^m \int_{\Omega} \left( Y_i^* Y_i u(x) - \frac{u(x) Y_i p_n(x)}{p_n(x)} \right) \cdot \frac{v(x)}{p_n(x)} \, dx \\
&\quad + \sum_{i=1}^m \int_{\partial \Omega} \vec{n} \cdot \left( g_i Y_i \left( \frac{u(x)}{p_n(x)} \right) \right) \frac{v(x)}{p_n(x)} \, dS \\
&= \langle \mathcal{A}^a u, v \rangle_a = \langle f, v \rangle_a
\end{align*}
for all $v \in WH^{a}(\Omega)$.
\subsection{Proofs}

\begin{proposition}[\textbf{Properties of the Nonholonomic Fokker-Planck equations}]
\label{prop:hteqprop}
Let $a = \frac{1}{p_n}$ for some function $p_n \in L^{\infty}(\Omega)$ such that $a \in L^{\infty}(\Omega)$. Given Assumption \ref{asmp1}, let $p_0 \in L^2_a(\Omega)$, then there exists a (mild) solution $p_t \in C([0,T];L^2_a(\Omega)) $ to the Fokker-Planck equation \eqref{eq:Mainsysan}. Moreover, the solution satisfies the following properties.
	There exists a semigroup of operators $(\T(t))_{t \geq 0}$ such that the solution of the \eqref{eq:fwdpdf} is given by $\T(t)p_0$.
    
	 If $p_{0} \in \D(\mathcal{A}^a)$, then $\dot{p} = -\mathcal{A}^a p$ for all $t \in [0,T]$, and $\dot{p} \in C((0,T];L^2_a(\Omega))$.
\end{proposition}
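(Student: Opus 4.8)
The plan is to realize $-\mathcal{A}^a$ as the generator of an analytic $C_0$-semigroup on $L^2_a(\Omega)$ through the classical representation theorem for closed, densely defined sesquilinear forms (Kato, Lions), and then to read off every assertion from standard semigroup theory. This is essentially the specialization to the weight $a=1/p_n$ of results already established for the nonholonomic Fokker--Planck equation in \cite{elamvazhuthi2023density}; I sketch the self-contained route.

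First I would fix the Hilbert space $H:=L^2_a(\Omega)$, the symmetric sesquilinear form $\omega_a$, and the form domain $V:=WH^1_a(\Omega)$, and verify the four hypotheses of the form method. (i) $V$ is densely and continuously embedded in $H$: density holds because $C^\infty_c(\Omega)\subset WH^1_a(\Omega)$ (using that $a=1/p_n$ is smooth and bounded below, so $Y_i(a\phi)$ is smooth with compact support) and is dense in $L^2(\Omega)=H$. (ii) $\omega_a$ is bounded on $V\times V$: writing $\omega_a(u,v)=\sum_{i=1}^m\int_\Omega p_n\,Y_i(u/p_n)\,Y_i(v/p_n)\,dx$, Cauchy--Schwarz together with $\|p_n\|_\infty<\infty$ gives $|\omega_a(u,v)|\le\|p_n\|_\infty\|u\|_{WH^1_a}\|v\|_{WH^1_a}$. (iii) A Gårding inequality holds: since $p_n\ge c_0>0$ (a consequence of $a\in L^\infty$), one has $\omega_a(u,u)\ge c_0\sum_i\|Y_i(u/p_n)\|_2^2$, and adding $\|u\|_a^2\ge c\|u\|_2^2$ controls the full norm, so $\omega_a(u,u)+\|u\|_a^2\ge\min(c_0,c)\|u\|_{WH^1_a}^2$. (iv) $\omega_a$ is closed, because by (ii)--(iii) the form norm $(\omega_a(u,u)+\|u\|_a^2)^{1/2}$ is equivalent to $\|\cdot\|_{WH^1_a}$, with respect to which $WH^1_a(\Omega)$ is complete.

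With (i)--(iv) in hand, the representation theorem yields that the operator $\mathcal{A}^a$ associated with $\omega_a$ (i.e.\ $\mathcal{A}^a u=f$ iff $\omega_a(u,v)=\langle f,v\rangle_a$ for all $v\in V$, with $\D(\mathcal{A}^a)$ as in the text) is self-adjoint and nonnegative, and $-\mathcal{A}^a$ generates an analytic $C_0$-semigroup $(\T(t))_{t\ge0}$ of contractions on $L^2_a(\Omega)$. For $p_0\in L^2_a(\Omega)$, set $p_t:=\T(t)p_0$; strong continuity gives $p\in C([0,T];L^2_a(\Omega))$, and the standard identity $\int_0^t\T(s)p_0\,ds\in\D(\mathcal{A}^a)$ with $\mathcal{A}^a\!\int_0^t\T(s)p_0\,ds=p_0-\T(t)p_0$ is exactly the mild-solution relation $p_t=p_0-\mathcal{A}^a\!\int_0^t p_s\,ds$ (and uniqueness follows from linearity). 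That this $p_t$ solves \eqref{eq:Mainsysan} with the zero-flux boundary condition is precisely the content of the Green's-theorem computation in Section~\ref{subsec:comp}, which identifies $\mathcal{A}^a$ with $-\sum_i Y_i^*(\tfrac1a Y_i(a\,\cdot))$ together with $\sum_i\vec n\cdot(g_i Y_i(u/p_n))=0$ on $\partial\Omega$, the $C^2$-regularity of $\partial\Omega$ from Assumption~\ref{asmp1} being used to make the boundary integrals meaningful. For the second assertion, if $p_0\in\D(\mathcal{A}^a)$ then standard $C_0$-semigroup theory gives $t\mapsto\T(t)p_0\in C^1([0,T];L^2_a(\Omega))$ with $\dot p_t=-\mathcal{A}^a\T(t)p_0=-\T(t)\mathcal{A}^a p_0$, which in particular lies in $C((0,T];L^2_a(\Omega))$.

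The main obstacle is not any single estimate above — each is routine once $p_n$ is known to be bounded above and below — but rather the bookkeeping that matches the abstract form-generator to the concrete NFE operator with its zero-flux boundary condition: ensuring that ``mild solution'' in the operator-semigroup sense coincides with the integral formulation in the statement, and that the Neumann-type (zero-flux) condition is the one encoded by taking the \emph{full} space $WH^1_a(\Omega)$ (rather than a subspace with vanishing trace) as the form domain. This identification, together with the finer mapping properties of $\mathcal{A}^a$, is where the regularity hypotheses on $\partial\Omega$ and on $p_n$ enter, and where it is cleanest to lean directly on \cite{elamvazhuthi2023density}.
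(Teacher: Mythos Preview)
Your proposal is correct and follows essentially the same approach as the paper: both invoke the analytic semigroup generated by $-\mathcal{A}^a$ (established in \cite{elamvazhuthi2023density} via the form method you sketch) and then read off mild-solution existence and time-differentiability from standard $C_0$-semigroup theory. The paper's proof is simply a two-line citation of \cite[Theorem~III.6]{elamvazhuthi2023density} for existence and analyticity together with \cite[Theorem~2.1.10]{curtain2012introduction} for differentiability, whereas you unpack the form-method verification; the content is the same.
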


\begin{proof}[Proof of Proposition \ref{prop:hteqprop}]
The existence of the NFE has been shown in \cite[Theorem III.6]{elamvazhuthi2023density}. The semigroup $\mathcal{T}(t)$ is known to be analytic \cite[Theorem III.6]{elamvazhuthi2023density} ,and hence, the solution satisfies $p \in D(\mathcal{A}^a)$ for all $ t \in (0,T]$. Therefore, the differentiability of the solution with respect to time follows from \cite{curtain2012introduction}[Theorem 2.1.10].
\end{proof}

In the following proposition we establish the invertibility of the operator $\mathcal{A}^a$, which will play an important role in addressing Problem \ref{prb3}.

\begin{proposition} 
\label{prop:invA}
Given Assumption \ref{asmp2} and \ref{asmp1}, 
Suppose $f \in L^2_a(\Omega)$ such that $\int_{\Omega} f(x) dx =0$. Then there exists a unique solution $\psi \in WH^1_a(\Omega)$ to the weighted nonholonomic Poisson equation,
\begin{equation}
	\label{eq:hpoisseq}
	\mathcal{A}^a \psi = -\sum_{i=1}^m Y_i^*(\frac{1}{a(x)} Y_i(a(x)  \psi) = f.
\end{equation}
Hence, the restriction of the operator $\mathcal{A}^a$ on $L^{2}_{{a,\perp}}(\Omega)$, is invertible. Moreover, there exists a constant $C>0$ such that 
\begin{equation}
	\|Y_i(a\psi)\|_a \leq C \|f\|_a
\end{equation}
for all $f \in L^2_{{a,\perp}}(\Omega)$.
\end{proposition}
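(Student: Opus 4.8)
The plan is to read \eqref{eq:hpoisseq} as the weak Euler--Lagrange equation attached to the form $\omega_a$ and to solve it by the Lax--Milgram theorem on a codimension-one subspace; the only nontrivial ingredient is a nonholonomic Poincar\'e inequality. First I would substitute $\phi=a\psi$: since $a$ is bounded above and below, $\psi\mapsto a\psi$ is an isomorphism of $WH^1_a(\Omega)$ onto $WH^1(\Omega)$ with equivalent norms, and \eqref{eq:hpoisseq} turns into the problem of finding $\phi\in WH^1(\Omega)$ with
\[
\tilde\omega(\phi,\eta):=\sum_{i=1}^m\int_\Omega \tfrac{1}{a}\,Y_i\phi\,Y_i\eta\,dx=\int_\Omega f\,\eta\,dx\qquad\text{for all }\eta\in WH^1(\Omega).
\]
Boundedness of $\tilde\omega$ on $WH^1(\Omega)\times WH^1(\Omega)$ is immediate from $1/a\in L^\infty(\Omega)$ and Cauchy--Schwarz, and $\eta\mapsto\int_\Omega f\eta\,dx$ is a bounded functional on $WH^1(\Omega)$ since $L^2_a(\Omega)\subset L^2(\Omega)$.

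Next I would identify the kernel of the form: $\tilde\omega(\phi,\phi)=0$ forces $Y_i\phi=0$ for every $i$, and because the vector fields are bracket-generating (Assumption~\ref{asmp2}) on the connected domain $\Omega$, this forces $\phi$ to be constant. Testing the equation against $\eta\equiv 1$ shows the solvability condition is exactly $\int_\Omega f\,dx=0$, which is assumed. It is therefore natural to work on the closed hyperplane $\mathcal{H}:=\{\phi\in WH^1(\Omega):\int_\Omega p_n\phi\,dx=0\}$, chosen so that the associated $\psi=\phi/a$ has zero Lebesgue mean; $\mathcal{H}$ is complementary to the constants in $WH^1(\Omega)$.

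The crux is coercivity of $\tilde\omega$ on $\mathcal{H}$, which reduces to the nonholonomic Poincar\'e inequality $\|\phi\|_{L^2(\Omega)}^2\le C_P\sum_{i=1}^m\|Y_i\phi\|_{L^2(\Omega)}^2$ for $\phi\in\mathcal{H}$. This is where Assumptions~\ref{asmp2} and~\ref{asmp1} are used together: for a non-characteristic domain with bracket-generating fields the embedding $WH^1(\Omega)\hookrightarrow L^2(\Omega)$ is compact, and, combined with the kernel characterization above, a standard compactness--contradiction argument delivers the inequality on $\mathcal{H}$; this subelliptic input I would import from \cite{elamvazhuthi2023density} rather than reprove. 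Since $1/a$ is bounded below by a positive constant, this yields $\tilde\omega(\phi,\phi)\ge c\|\phi\|_{WH^1}^2$ on $\mathcal{H}$, and Lax--Milgram produces a unique $\phi\in\mathcal{H}$ solving the weak equation against test functions in $\mathcal{H}$. Writing a general $\eta\in WH^1(\Omega)$ as $\eta_0+c$ with $\eta_0\in\mathcal{H}$ and $c$ a constant, and noting that both sides of the weak equation annihilate constants, shows $\phi$ solves the full weak problem; hence $\psi:=p_n\phi\in WH^1_a(\Omega)$ solves \eqref{eq:hpoisseq}. Any two $WH^1_a$-solutions differ by a constant multiple of $p_n$, so uniqueness holds once we normalize $\int_\Omega\psi\,dx=0$, which is precisely the invertibility of $\mathcal{A}^a$ restricted to $L^2_{a,\perp}(\Omega)$. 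The bound $\|Y_i(a\psi)\|_a\le C\|f\|_a$ then falls out of testing the weak equation with $\eta=\phi$, using coercivity, Cauchy--Schwarz, $Y_i(a\psi)=Y_i\phi$, and equivalence of the $\|\cdot\|_a$- and $\|\cdot\|_2$-norms.

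The main obstacle is the nonholonomic Poincar\'e inequality in the third step: $WH^1(\Omega)$ controls only the $m\le d$ horizontal derivatives, so this is not the classical Poincar\'e inequality and genuinely requires both the bracket-generating condition (to rule out nonconstant functions with vanishing horizontal gradient) and the regularity of $\partial\Omega$ (to obtain the compact embedding up to the boundary). I would handle it by citing the corresponding results of \cite{elamvazhuthi2023density}; the remaining steps are routine Hilbert-space arguments and bookkeeping between equivalent weighted norms.
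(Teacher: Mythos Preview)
Your proposal is correct and follows the same overall strategy as the paper: reduce \eqref{eq:hpoisseq} to a weak formulation, establish coercivity via a nonholonomic Poincar\'e inequality imported from \cite{elamvazhuthi2023density}, and apply Lax--Milgram on the mean-zero hyperplane. The differences are only in presentation. First, you substitute $\phi=a\psi$ and work with the unweighted space $WH^1(\Omega)$, whereas the paper stays with $\omega_a$ on $WH^1_{a,\perp}(\Omega)$; this is just a change of coordinates. Second, for the Poincar\'e inequality the paper invokes the spectral structure of $\mathcal{A}^a$ directly (discrete spectrum, simple zero eigenvalue, spectral gap $\lambda_2>0$, all from \cite{elamvazhuthi2023density}), giving $\langle\mathcal{A}^a u,u\rangle_a\ge\lambda_2\|u\|_a^2$ on $L^2_{a,\perp}$; you instead use the compact embedding $WH^1(\Omega)\hookrightarrow L^2(\Omega)$ plus a compactness--contradiction argument. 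These are two sides of the same coin---the spectral gap and the compact embedding are equivalent consequences of the subelliptic estimates---so neither route is more general, though your version is arguably more self-contained once the compact embedding is granted.
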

\begin{proof}
It is easy to see that $1/a$ is an eigenvector of $\mathcal{A}^a$ since $Y_i(a/a) = Y_i\mathbf{1}=\mathbf{0}$.  We know \cite{elamvazhuthi2023density} from that $1/a$ is an eigenvector with $0$ simple eigenvalue. Moreover, since the domain $\Omega$ satisfies Assumption \ref{asmp1} the domain is also NTA \cite{monti2005non}[Theorem 1.1], and hence also $\epsilon-\delta$ in the sense of \cite{garofalo1998lipschitz}. Therefore, from \cite{elamvazhuthi2023density}[Lemma III.3], the spectrum of $\mathcal{A}^a$ is purely discrete, consisting only of eigenvalues of finite multiplicity and have no finite accumulation point. Let $(\lambda_n)_{n=1}^{\infty}$ be the eigenvalues corresponding to the orthogonal basis of eigenvectors $\{e_n;n \in \mathbb{N}\}$. 

Since the operator $\mathcal{A}^a$ is positive and self-adjoint the eigenvalues are ordered in the form $0=\lambda_1< \lambda_2....$, with $\lim_{ n \rightarrow \infty} \lambda_n= +\infty$.

Let $u \in L^2_{a,\perp}(\Omega) $. Then $u = \sum_{n=2}^\infty \alpha_n e_n$ for some unique sequence $(\alpha_n)_{n=1}^{\infty}$ such that $\sum_{n=2}^{\infty} |\alpha_n|^2 = \|u\|^2_a$.
From this we can compute that
\begin{align}
	\label{eq:coerK}
	\langle \mathcal{A}^au,u \rangle_a = \sum_{n=2}^{\infty} \lambda_i |\alpha_n|^2 
	\geq   \frac{1}{\lambda_2}  \sum_{n=2}^{\infty}  |\alpha_n|^2 =  \frac{1}{\lambda_2}  \|u\|^2_2
\end{align}
Define the space $WH^1_{{a,\perp}}(\Omega) : = WH^1_a(\Omega) \cap L^2_{{a,\perp}}(\Omega)$ equiped with the norm $\|u\|_{\omega} : = \sqrt{\omega(u,u)}$.

The solvability of equation \eqref{eq:hpoisseq} is equivalent to finding a solution $\psi \in WH^1_{{a,\perp}}(\Omega) $ such that 
\[\omega_a(\psi,v) = <f,v>_a ,~~~\forall v \in \mathcal{D}(\omega)\]
It is easy to see that 
\[\omega(u,v) \leq \|u\|_\omega \|v\|_{\omega}\]
for all $u,v \in  WH^1_{{a,\perp}}(\Omega) $
Moreover, from the (Poincare inequality) estimate \eqref{eq:coerK} it follows that that 
\[\omega_a(u,u) \geq \frac{1}{\lambda_2}\|u\|_a\]
for all $u \in  WH^1_{{a,\perp}}(\Omega) $. Hence, existence and uniqueness of the solution $\psi \in WH^1_{{a,\perp}}(\Omega)$ follows from the Lax-Milgram theorem \cite{evans2022partial}.
\end{proof}

Proof of Theorem \ref{thm:detrel}
\begin{proof}
We compute,
\begin{align*}
	& \int_{0}^{T-\epsilon} \int_{\R^d} |\pi_i(t,x)| p^c_t (x)dxdt \\
	&= \int_{0}^{T-\epsilon} \int_{\R^d} | \frac{Y_i \mathcal{A}^{-1} \dot{p}^c }{p^c_{t}}| p^c_t (x)|dxdt \\
	& =\int_{0}^{T-\epsilon} \int_{\R^d} |Y_i \mathcal{A}^{-1} \dot{p}^c_t(x)|dxdt
\end{align*}
Since, $\dot{p}^f_t \in C(0,T]; L^2(\Omega))$ and hence $\dot{p}^c_t \in C[0,T); L^2(\Omega))$ by \ref{prop:invA}, we can conclude that this term is bounded for every $\epsilon >0$. Since the vector-fields $g_i$ are bounded on a compact set this implies that for $v(t,x) = \sum_{i=1}^m\pi_i(t,x) g_i(x)$ 
\[ \int_0^{T-\epsilon} \int_{\Rd} \frac{|v(t,x)|}{1+|x|}d\mu_t(x) dt < \infty .\]
Therefore, the existence of superposition solutions follows from \ref{thm:sup}.
If $p_0 \in \mathcal{D}(\mathcal{B}^1)$, then we can see that the integral estimate holds for $\epsilon = 0$, since in this case we have $\dot{p^c_t} \in C([0,T];L^2(\Omega))$. Therefore,  $\int_{\R^d} |Y_i \mathcal{A}^{-1} \dot{p}^c_t(x)|$ is uniformly bounded over $[0,T]$.
\end{proof}

{\it Proof of Lemma} \ref{lem:extrafull}. Let $\psi_t = (\mathcal{A}^1)^{-1} \dot{ p}^{ref}_t$, for all $ t \in [0,T]$. Then the control law law $\pi_i(t,x) = \frac{Y_i \psi_t(x)}{p^{ref}_t(x)}$ is well defined by Proposition \ref{prop:invA}.

Let $\phi \in C^{\infty}_c((0,T);\Rd)$. We compute
\begin{align*}
\int_0^T \int_{\Rd} \big (\partial_t \phi(t,x) + g(x,\pi(t,x)) \cdot \nabla_x \phi(t,x) \big ) p^{ref}_t(x)dxdt
\end{align*}
by substituting for $g$ this expression is equivalent to,
\begin{align*}
\int_0^T \int_{\Rd} \big (\partial_t \phi(t,x) - \sum_{i=1}^m \pi_t(t,x) \cdot Y_i \phi(t,x)  \big ) p^{ref}_t(x)dxdt
\end{align*}
Substituting for the form of the control law we get,
\begin{align*}
\int_0^T \int_{\Rd} \big (\partial_t \phi(t,x) - \sum_{i=1}  \frac{Y_i (\mathcal{A}^1)^{-1} \dot{p}^{ref} }{p^{ref}_{t}} \cdot Y_i \phi(t,x) \big ) p^{ref}_t(x)dxdt 
\end{align*}
Since $\sum_{i=1}^m Y^*_iY_i = \mathcal{A}^1$, this becomes
\begin{align*}
\int_0^T \int_{\Rd} \big (\partial_t \phi(t,x)p^{ref}_t(x) + \dot{p}^{ref}_t   \phi(t,x)  ) dxdt 
\end{align*}
By integration by parts with respect to time, and using the fact that $\phi$ is compactly supported, this term is equal to $0$. This concludes that $p^{ref}_t$ satisfies the continuity equation.

\hfill $\qed$

{\it Proof of Lemma} \ref{lem:extra}
It is a well known property of the solutions of the
Fokker-Planck equation that $p^f_t > 0$ for all $t \in (0,T]$. 
Moreover, $p^f_t \in \D(\B^a)$ and we know that $ \dot{p}^f_t = -\B^a p^f_t $ for all $(0,T]$ from Proposition \ref{prop:hteqprop}. This implies that $\langle \mathbf{1}, \dot{p}^f_t \rangle_2 = \langle \B^a \mathbf{1}, \dot{p}^f_t\rangle_2 = \mathbf{0}$.
Hence, $\dot{p}^f_t \in L^2_{{a,\perp}} (\Omega)$ for all $ t \in [0,T]$. Additionally, we know from Proposition \ref{prop:hteqprop} that $\dot{p}^f_t \in C((0,T];L^2_a(\Omega))$.

\hfill $\qed$

\begin{lemma}
\label{lem:vecest1}
Given $p_0 \in L^2(\Omega) \cap \mathcal{P}({\Omega})$. Let $p^f_t$ be the mild solution of \eqref{eq:kolmog2}. Then we have that 
\begin{equation}
\int_0^T \int_{\Omega}|Y_i p^f_t|dxdt < \infty
\end{equation}
\end{lemma}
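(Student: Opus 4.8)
The plan is to obtain the claimed space--time bound on $Y_i p^f_t$ from the basic dissipation (energy) identity for the nonholonomic Fokker--Planck equation, combined with the smoothness and strict positivity of $p_n$ supplied by Assumption~\ref{asmp3}. Write $a = 1/p_n$ and recall from Proposition~\ref{prop:hteqprop} that the mild solution $p^f_t$ of \eqref{eq:transforpde} (equivalently $\dot p = -\mathcal{A}^a p$) lies in $C([0,T];L^2_a(\Omega))$; that it is a probability density for every $t$ (mass conservation follows from $\langle \dot p^f_t, p_n\rangle_a = -\omega_a(p^f_t,p_n) = 0$, using $a p_n \equiv 1$); and that, by analyticity of the semigroup, $p^f_t \in \mathcal{D}(\mathcal{A}^a) \subset WH^1_a(\Omega)$ with $t \mapsto p^f_t$ differentiable on $(0,T]$. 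In particular $Y_i(a p^f_t) = Y_i(p^f_t/p_n) \in L^2(\Omega)$ for every $t>0$, which is exactly the horizontal regularity we will exploit.

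First I would derive the dissipation estimate. Since $\mathcal{A}^a p_n = 0$ (equivalently $Y_i(a p_n) = Y_i(\mathbf 1) = 0$), for $t \in (0,T]$ one has $\frac{d}{dt}\|p^f_t - p_n\|_a^2 = 2\langle \dot p^f_t, p^f_t - p_n\rangle_a = -2\,\omega_a(p^f_t - p_n,\, p^f_t - p_n)$, and, again using $Y_i(a p_n)=0$, $\omega_a(p^f_t - p_n, p^f_t - p_n) = \sum_{i=1}^m \int_\Omega p_n(x)\,|Y_i(p^f_t/p_n)(x)|^2\,dx \ge 0$. Thus $t \mapsto \|p^f_t - p_n\|_a^2$ is continuous on $[0,T]$ and nonincreasing; integrating from $s$ to $T$ and letting $s \to 0^+$ (monotone convergence on the left, continuity on the right) yields $2\int_0^T \sum_{i=1}^m \int_\Omega p_n\,|Y_i(p^f_t/p_n)|^2\,dx\,dt \le \|p_0 - p_n\|_a^2 = \int_\Omega (p_0 - p_n)^2/p_n\,dx < \infty$, where finiteness of the right-hand side uses $p_0 \in L^2(\Omega)$, $p_n \in C^\infty(\bar\Omega)$, and $p_n \ge c > 0$ on $\bar\Omega$.

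Next I would convert this into the desired $L^1$ bound via the product rule $Y_i p^f_t = p_n\,Y_i(p^f_t/p_n) + (p^f_t/p_n)\,Y_i p_n$. For the second term, $|Y_i p_n| \le C$ and $p_n \ge c$ on the compact $\bar\Omega$ (Assumptions~\ref{asmp1} and~\ref{asmp3}), so $\int_0^T \int_\Omega (p^f_t/p_n)|Y_i p_n|\,dx\,dt \le (C/c)\int_0^T \int_\Omega p^f_t\,dx\,dt = (C/c)\,T$ by mass conservation. For the first term, apply the Cauchy--Schwarz inequality in $x$, using $\int_\Omega p_n\,dx = 1$, to get $\int_\Omega p_n\,|Y_i(p^f_t/p_n)|\,dx \le \big(\int_\Omega p_n\,|Y_i(p^f_t/p_n)|^2\,dx\big)^{1/2}$, and then Cauchy--Schwarz in $t$ to bound $\int_0^T$ of this by $T^{1/2}$ times the square root of the dissipation integral from Step~1, which is finite. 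Adding the two contributions gives $\int_0^T \int_\Omega |Y_i p^f_t|\,dx\,dt < \infty$, as claimed.

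The main obstacle I anticipate is the technical bookkeeping at $t=0$: the mild solution need not be differentiable there, so the energy identity is only available on $(0,T]$, and one must argue carefully (via the monotonicity of $t \mapsto \|p^f_t - p_n\|_a^2$ and the limit $s \to 0^+$) that the time integral of the dissipation $\omega_a(p^f_t - p_n, p^f_t - p_n)$ over $(0,T]$ is controlled by the finite initial energy $\|p_0 - p_n\|_a^2$. Everything else --- the product rule, the two Cauchy--Schwarz steps, and the use of $p_n \in C^\infty(\bar\Omega)$ with $p_n > 0$ --- is routine once the dissipation estimate is established.
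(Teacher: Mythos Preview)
Your argument is correct and follows essentially the same route as the paper: both proofs use the energy/dissipation identity for the semigroup generated by $-\mathcal{A}^a$ to bound $\int_0^T \omega_a(p^f_t,p^f_t)\,dt$ by the initial $L^2_a$ data, then invoke the product rule (using $a=1/p_n\in C^1(\bar\Omega)$) to pass from control of $Y_i(ap^f_t)$ to control of $Y_i p^f_t$, and finally use compactness of $\Omega$ together with Cauchy--Schwarz to conclude the $L^1$ space--time bound. Your centering by $p_n$ and your handling of the $t\to 0^+$ limit are in fact a bit more carefully stated than in the paper, but the underlying mechanism is identical.
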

\begin{proof}
We know that $p^f_t \in \mathcal{D}(\mathcal{A}^a)$ for all $t >0$. Since $\mathcal{D}(\omega^a) \subseteq \mathcal{D}(\mathcal{A}^a)$ we can infer that $p^f_t \in WH^1_a(\Omega)$ for all $t >0$. Moreover, 
\begin{equation}
\frac{d}{dt} \|p^f_t\|^2_a = -\omega_a(p^f_t,p^f_t) 
\end{equation}
for all $t >0$. Fix $\epsilon >0$. This implies that 
\begin{equation}
\label{eq:esti}
\|p^f_T\|_2 -\|p^f_\epsilon\|_2  = -\int^T_\epsilon \omega_a(p^f_t,p^f_t)dt 
\end{equation}
It follows from a simple application of product rule that $f \in WH^1_a(\Omega)$ implies $f \in WH^1_1(\Omega)$ since $ a \in C^{1}(\bar{\Omega})$. Moreover, if $f \in WH^1_{a}(\Omega)$, then $f \in WH^1_1(\Omega)$ with 
\[ \|f\|_{WH^1_1(\Omega)} \leq C \|f\|_{WH^1_a(\Omega)}\]
for some constant $C>0$ independent of $f$. 
Since, $p^f \in C([0,T];L^2(\Omega))$ we can combine this with \eqref{eq:esti} that 
\[\int_\epsilon^T \int_{\Omega} |Y_ip^f_t|^2dxdt < C \sup_{t \in [0,T]} \|p^f_t\|_{2} \]
for some constant $C>0$. Since the set $\Omega$ is compact this implies that 
\[\int_\epsilon^T \int_{\Omega} |Y_ip^f_t|dxdt < C' \sup_{t \in [0,T]} \|p^f_t\|_{2} \]
for some constant $C' > 0$. Since the constant is independent of $\epsilon>0$, the result follows. 
\end{proof}

Proof of Theorem \ref{thm:driftnoise}

\begin{proof}
We require that $v_i(x)$ is such that 
\begin{align}
&\sum_{i=1}^m (Y_i^*)^2 p + \sum_{i=1}^m Y_i^*(v_i(x) p) \\
& = \sum_{i=1}^m - (Y_i^* Y_i) p + \sum_{i=1}^m Y_i^* \left( p \frac{Y_i p_n}{p_n} \right).
\end{align}

Firstly, by the product rule:
\begin{align*}
Y_i^* &= \sum_{j=1}^n -\frac{\partial}{\partial x_j} \left( g_i^j \cdot \right) \\
&= \sum_{j=1}^n -\frac{\partial g_i^j}{\partial x_j} \left( \cdot \right) - \sum_{j=1}^n g_i^j \frac{\partial}{\partial x_j} \left( \cdot \right) \\
&= \sum_{j=1}^n -\frac{\partial g_i^j}{\partial x_j} \left( \cdot \right) - Y_i.
\end{align*}

Therefore, we have:
\begin{align*}
Y_i = \sum_{j=1}^n -\frac{\partial g_i^j}{\partial x_j} \left( \cdot \right) - Y_i^*.
\end{align*}

If we set 
\begin{equation*}
v_i = \sum_{j=1}^n \frac{\partial g_i^j}{\partial x_j} + \frac{Y_i p_n}{p_n},
\end{equation*}
we get:
\begin{align}
&\sum_{i=1}^m (Y_i^*)^2 + \sum_{i=1}^m Y_i^* \left( -Y_i - Y_i^* \right) + \sum_{i=1}^m Y_i^* \left( \cdot \frac{Y_i p_n}{p_n} \right) \nonumber \\
&= - \sum_{i=1}^m Y_i^* Y_i + \sum_{i=1}^m Y_i^* \left( \cdot \frac{Y_i p_n}{p_n} \right).
\end{align}

Then, equation \eqref{eq:kfe} becomes:
\begin{equation}
\partial_t p = - \sum_{i=1}^m Y_i^* Y_i p + \sum_{i=1}^m Y_i^* \left( p \frac{Y_i p_n}{p_n} \right). \label{eq:nonhomhe}
\end{equation}
The (mild) solution to this equation is given by the semigroup generated by the operator $-\mathcal{A}^a$ for $a = 1/p_n$. Hence, the result follows from Proposition \ref{prop:hteqprop}.
\end{proof}

Proof of Proposition \ref{prop:irred}
\begin{proof}
For any any measurable set $\tilde{\Omega} \subseteq \Omega$, let $\xi_{\tilde{\Omega}}$ denote the characteristic function. According to \cite[Corollary 2.11]{ouhabaz2009analysis} irreducibility of the semigroup is equivalent to the fact that there exists $\Omega_1 \subset \Omega$ such that the Lebesgue measure of $\Omega \backslash \Omega_1$ is not equal to $0$ and whenever $u$ lies in form domain $\mathcal{D}(\omega_a)$ we also have that $\xi_{\Omega_1} u  $ lies in the domain of the form $\mathcal{D}(\omega_a)$. Clearly, $1/a$ lies in the form domain. Suppose the semigroup is not irreducible. Then $v = \xi_{\Omega_1} 1/a$ must lie in the form domain for some set $\Omega_1$. We know that $v \in \mathcal{D}(\omega_a)$ implies $\xi_{\Omega_1} \in \mathcal{D}(\omega_\mathbf{1})$. It is a property of weak derivatives that $\xi_{\{v=0\}}Y_iv =0$, where $\{v=0\}=:\{x \in \Omega; v(x) = 0 \}$. This implies that $Y_i \xi_{\Omega_1} = \xi_{\Omega_1}Y_i\xi_{\Omega} = \xi_{\Omega_1}Y_i \mathbf{1} = 0$. Therefore, $\omega_1(\xi_{\Omega_1},\xi_{\Omega_1}) = 0$ and hence $\mathcal{A}^{\mathbf{1}} \xi_{\Omega_1} = \mathbf{0}$. This contradicts with \cite[Theorem 3.6]{elamvazhuthi2023density} that $0$ is a simple eigenvalue of $\mathcal{A}^{\mathbf{1}}$, unless $\Omega_1$ has either full Lebesgue measure or 0 Lebesgue measure. 
\end{proof}

Proof of Proposition \ref{prop:realdrift}

\begin{proof}
The expression is well posed by Proposition \ref{prop:irred}.
Let $\phi \in C^{\infty}_c((0,T) \times \Rd)$. By substituting for $\pi_i$ in the definition \eqref{eq:wkctty}, we compute,  \ref{thm:driftnoise},
\begin{align}
& \int_{\Rd} \big (\partial_t \phi(t,x) + (g(x,\pi(t,x)))  \cdot \phi(t,x) \big )p^f_t(x)dx \\
& = \int_{\Rd} \big (\partial_t \phi(t,x) + \sum_{i=1}^m(\pi_i(t,x) \cdot Y_i \phi(t,x) \big )p^f_t(x)dx  \nonumber \\
& =\int_{\Rd} \partial_t \phi(t,x) + \sum_{i}(\frac{Y_i p^f_t}{p^f_t} - \frac{Y_i p_n}{p_n} ) \cdot Y_i \phi(t,x) \big )p^f_t(x)dx  \nonumber \\
& =  \int_{\Rd} \partial_t \phi(t,x) p^f_t(x)dx + 
\int_{\Rd}\sum_{i}(\frac{Y_i p^f_t}{p^f_t} - \frac{Y_i p_n}{p_n} ) \cdot Y_i \phi(t,x) \big )dx  \nonumber \\
& = \int_{\Rd} \big (\partial_t \phi(t,x) p^f_t(x)dx + \int_0^T\omega_a(p^f_t,\phi(\cdot,t))
\label{eq:2ndtotst}
\end{align}
for all $ t\in (0,T)$
By integration by parts as using the fact that $\phi$ is compactly supported,
\begin{align*}
\int_0^T\int_{\Rd} \partial_t \phi(t,x) p^f_t(x)dx  & =  -\int_0^T\int_{\Rd} \phi(t,x) \frac{d}{dt}p^f_t(x)dx \\
& = -\int_0^T \omega_a(p^f_t,\phi(t,\cdot))
\end{align*}
Substituting this expression in \eqref{eq:2ndtotst} we get that,
\[\int_0^T \int_{\Rd} \big (\partial_t \phi(t,x) + (g(x,\pi(t,x)))  \cdot \phi(t,x) \big )p^f_t(x)dxdt =0.\]
This concludes the proof.
\end{proof}

Proof of Theorem \ref{thm:linstab}

\begin{proof}

Now we can compute the error,
\begin{align}
\|p_t - p_n\|^2_2 &= \| \int_{\Rd} K(t,x,\cdot)p^f_0(x)dx-p_n\|^2_2 \\
& = \int_{\Rd} |\int_{\Rd} K(t,x,y)p^f_0(x)dx-p_n(y)|^2dy 
\label{eq:doubdct}
\end{align}
Since, the system is a stable LTI, $K$ is uniformly bounded with time. Moreover, since $A$ is Hurwitz and $e^{At }x \rightarrow 0$ for all $x \in \Rd$, we know that
\[ \lim_{t \rightarrow \infty} K(t,x,y)p^f_0(x) = \lim_{t \rightarrow \infty} p_{n}(y) p^f_0(x) \] 
for all $x \in \Rd$.
From the dominated convergence theorem, this implies that 
\[\lim_{t \rightarrow \infty}|\int_{\Rd} K(t,x,y)p^f_0(x)dx-p_n(y)|^2dy = \int_{\Rd} p_n(y)p^f_0(x)dx  \]
Using another application of dominated convergence theorem to the outer intergal of \eqref{eq:doubdct}, we can conclude that 
\begin{align*}
\|p_t - p_n\|^2_2 &= \int_{\Rd} \lim_
{t \rightarrow \infty}|\int_{\Rd} K(t,x,y)p^f_0(x)dx-p_n(y)|^2dy \\
& = \int_{\Rd} |\int_{\Rd} p_{\infty}(y) p^f_0(x)dx - p_{\infty}(y)|^2dy
\\
& = 0
\end{align*}
\end{proof}

Next, we derive some derivative bounds on the solutions that will aide addressing Problem \ref{problem:realization}.

\begin{theorem}
\label{thm:score_linear_growth}
Suppose the system satisfies  Assumption~\ref{asmp:lti}, and let $v_i \equiv 0$ in~\eqref{eq:nhmsde}. Let $p_t^f$ be the solution of~\eqref{eq:kfe}. Fix $\epsilon>0$ and assume that the initial density $p_0^f$ has compact support in some ball 
 $\overline{B_R(0)}$ for some $R>0$.
Then, for every $t\in[\epsilon,T]$ and all $y\in\mathbb R^d$, we have  the pointwise linear-growth bound
\[
\bigl|\nabla \log p_t^f(y)\bigr|
\;\le\;
\|Q_t^{-1}\|\bigl(|y|+\|e^{At}\|\,R\bigr).
\]
\end{theorem}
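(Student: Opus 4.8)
The plan is to exploit the explicit Gaussian-mixture representation of $p_t^f$ and to rewrite the score $\nabla\log p_t^f(y)$ as an average of affine vector fields against a probability measure concentrated on $\operatorname{supp}(p_0^f)\subseteq\overline{B_R(0)}$, after which the bound follows from Jensen's inequality. Recall from \eqref{eq:kernel} that for $t>0$---which holds throughout $[\epsilon,T]$ and guarantees invertibility of $Q_t$ by Assumption~\ref{asmp:lti}---one has $p_t^f(y)=\int_{\mathbb{R}^d}K(t,x,y)\,p_0^f(x)\,dx$, a superposition of Gaussians with mean $e^{At}x$ and covariance $Q_t$. Since $K(t,\cdot,\cdot)>0$ pointwise and $p_0^f$ is a probability density, $p_t^f(y)>0$ for every $y$, so $\log p_t^f$ is well defined.

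First I would differentiate under the integral sign. The Gaussian kernel satisfies $\nabla_y K(t,x,y)=-Q_t^{-1}(y-e^{At}x)\,K(t,x,y)$, and because $p_0^f$ is supported in the compact set $\overline{B_R(0)}$, the integrand and its $y$-gradient are dominated locally uniformly in $y$ by an integrable function of $x$; hence the interchange of $\nabla_y$ and $\int dx$ is legitimate, giving $\nabla_y p_t^f(y)=-\int_{\mathbb{R}^d}Q_t^{-1}(y-e^{At}x)\,K(t,x,y)\,p_0^f(x)\,dx$. Dividing by $p_t^f(y)$ then represents $\nabla\log p_t^f(y)=-\int_{\mathbb{R}^d}Q_t^{-1}(y-e^{At}x)\,d\mu_y(x)$ as the average of $x\mapsto -Q_t^{-1}(y-e^{At}x)$ against the probability measure $d\mu_y(x):=\frac{K(t,x,y)\,p_0^f(x)}{p_t^f(y)}\,dx$, which is supported inside $\operatorname{supp}(p_0^f)\subseteq\overline{B_R(0)}$.

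To finish I would apply Jensen's inequality together with the support bound: $|\nabla\log p_t^f(y)|\le\int_{\mathbb{R}^d}|Q_t^{-1}(y-e^{At}x)|\,d\mu_y(x)$, and for $\mu_y$-a.e.\ $x$ one has $|x|\le R$, so $|y-e^{At}x|\le|y|+\|e^{At}\|\,R$ and therefore $|Q_t^{-1}(y-e^{At}x)|\le\|Q_t^{-1}\|\,(|y|+\|e^{At}\|\,R)$, which yields the claimed estimate.

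I do not expect a serious obstacle; the only steps requiring care are the justification of differentiation under the integral (handled by the compact support of $p_0^f$ and the smoothness and Gaussian decay of $K$) and the remark that $p_t^f(y)$ never vanishes, which is what makes the score an honest weighted average of vectors sitting over $\overline{B_R(0)}$ rather than a potentially unbounded quotient. The hypothesis $\epsilon>0$ plays no role beyond keeping $\|Q_t^{-1}\|$ finite; if a time-uniform version were wanted one would additionally invoke continuity of $t\mapsto Q_t$ to bound $\sup_{t\in[\epsilon,T]}\|Q_t^{-1}\|<\infty$.
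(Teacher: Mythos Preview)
Your proof is correct and follows essentially the same route as the paper: both use the kernel representation to write $\nabla\log p_t^f(y)$ as a $Q_t^{-1}$-weighted average of $y-e^{At}x$ over a probability measure supported in $\overline{B_R(0)}$, then apply the triangle inequality and the bound $|x|\le R$. Your treatment is in fact slightly more careful, explicitly justifying differentiation under the integral and the positivity of $p_t^f$, which the paper leaves implicit.
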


\begin{proof}
As noted previously, the solution of the PDE \eqref{eq:kfe} can be represented using a integral operation of a kernel function given by
\[
p_t^f(y)=\int_{\mathbb R^d}K(t,x,y)\,p_0^f(x)\,dx,
\]
where
\[K(t,x,y) =  \frac{1}{(2 \pi)^{d/2} {\rm det}(Q_{t})^{1/2} }e^{-\frac{1}{2}(y-e^{At}x)^T Q^{-1}_{t} (y-e^{At}x)}  \]

Differentiating with respect to $y$ yields
\[
\nabla_y K(t,x,y)=-K(t,x,y)\,Q_t^{-1}(y-e^{At}x),
\]
and hence
\begin{align*}
\nabla p_t^f(y)
&=\int_{\mathbb R^d}\nabla_y K(t,x,y)\,p_0^f(x)\,dx \\
&=-Q_t^{-1}\int_{\mathbb R^d}K(t,x,y)\,(y-e^{At}x)\,p_0^f(x)\,dx.
\end{align*}
Since $Q_t > 0$ for all $t>0$ by controllability, $p_t^f(y)>0$ for all $y$, and therefore
\begin{align*}
\nabla \log p_t^f(y)
& =\frac{\nabla p_t^f(y)}{p_t^f(y)} \\
& =-Q_t^{-1}\left(
y-\frac{\int_{\mathbb R^d}K(t,x,y)\,e^{At}x\,p_0^f(x)\,dx}
{\int_{\mathbb R^d}K(t,x,y)\,p_0^f(x)\,dx}
\right).
\end{align*}
The matrix $e^{At}$ has finite matrix norm and $p^f_0$ has compact support in $R$. Therefore, $\|e^{At} x\|$ can be bounded as $R\|e^{At} \|$. Then using a simple application of triangle inequality and the fact that $\int_{\mathbb R^d}K(t,x,y)\,p_0^f(x)\,dx = 1$, the result follows.
\end{proof}

Proof of Theorem \ref{thm:revglor2}

\begin{proof}
Consider the vector field
\[v(t,x) = A\gamma_t+ B \boldsymbol{\pi}(t,\gamma_t)  \] 
We compute
\begin{align*}
& \int_0^T \int_{\Rd} \frac{|v(t,x)|}{1+|x|}p^f_{T-t}(x)dx dt  \\
&=  \int_0^T \int_{\Rd}  \frac{|Ax|}{1+|x|} p^f_{T-t}(x)dx dt \\
&+ \int_0^T \int_{\Rd}  \frac{|B \boldsymbol{\pi}(t,\gamma_t)|}{1+|x|}  p^f_{T-t}(x)dx dt\\
&\leq   C \int_0^T \int_{\Rd}  \frac{|x|}{1+|x|} p^f_{T-t}(x)dx dt \\
&+ C \sum_{i=1}^m\int_0^T \int_{\Rd}  \frac{| Y_ip^f_{T-t}(t,\gamma_t)|}{1+|x|} dx dt
\end{align*}
for some constant $C>0$ depending only on the matrices $A$ and $B$.
The first term is clearly bounded. The second term is bounded by the derivative bound of Theorem \ref{thm:score_linear_growth}. Then the result follows from the superposition principle stated in Theorem \ref{thm:sup}. Note that for this case since we have linear growth of vector fields, one in fact, doesn't need to invoke the superposition principle. However, to keep the discussion unified we maintain the presentation this way. 
\end{proof}

Proof of Corollary \ref{cor:trglin}.

\begin{proof}
Due to the linear growth condition established in Theorem~\ref{thm:score_linear_growth},
solutions of the ODE for the time-reversing control are globally defined over the interval $[0,T)$.
This implies that the probability measure $\mathbb{P}$ in Theorem~\ref{thm:revglor2} is well defined
on $C([0,T);\mathbb{R}^d)$ and unique. Moreover, by construction we have
\begin{align*}
(\gamma_t)_\#\mathbb{P} = p^f_{T-t}, \qquad \forall t\in[0,T),
\end{align*}
and hence $(\gamma_t)_\#\mathbb{P}$ converges weakly to $p_0$ as $t\rightarrow T$.
Since $\Omega_{\rm target}$ is closed, $d_{\Omega_{\rm target}}$ is continuous. Therefore,
\begin{align*}
d_{\Omega_{\rm target}}(\gamma_t)
~\rightarrow~
(d_{\Omega_{\rm target}})_\# p_0
\qquad \text{as } t\rightarrow T.
\end{align*}
Because $\mathrm{supp}(p_0)\subset\Omega_{\rm target}$, we have
\[
d_{\Omega_{\rm target}}(x)=0
\quad \text{for } p_0\text{-a.e. } x,
\]
and hence
$
(d_{\Omega_{\rm target}})_\# p_0 = \delta_0.
$
Consequently, $
d_{\Omega_{\rm target}}(\gamma_t) ~\Rightarrow~ 0.$
Since convergence in distribution to a constant implies convergence in probability \cite{billingsley2013convergence}[Theorem 25.3], it follows that
\begin{align*}
\lim_{t \rightarrow T}
\mathbb{P}\!\left(d_{\Omega_{\rm target}}(\gamma_t)>\eta\right)=0,
\qquad \forall \eta>0,
\end{align*}
which completes the proof.
\end{proof}
}



%
\bibliographystyle{unsrt}
\bibliography{ref}

\end{document}